\newcommand*\mR{\mathbb{R}}
\newcommand*\mN{\mathbb{N}}
\newcommand*\esom{L^{\infty}(\Omega)}
\newcommand*\porn{\int\limits_{\mR^n}}
\newcommand*\hkrn{H_\nu^{\Omega}(\mR^n)}
\newcommand*\poom{\int\limits_{\Omega}}
\newcommand*\poza{\int\limits_{\Omega^c}}
\newcommand*\homc{H_\nu(\Omega^c)}
\newcommand*\rnbeze{\lim\limits_{\varepsilon \to 0^+} \int\limits_{\mR^n \backslash B(0,\varepsilon)}}
\newcommand*\bezomkw{\iint\limits_{\mR^{2n}\backslash \Omega^C \times \Omega^C}}
\newcommand*\vkrn{V_\nu^{\Omega}(\mR^n)}
\newcommand*\hrn{H_\nu(\mR^n)}
\newcommand*\wtu{\widetilde{u}}
\newcommand*\mP{\mathbb{P}}
\newcommand*\mE{\mathbb{E}}
\newcommand*\wtx{\widetilde{x}}
\newcommand*\wty{\widetilde{y}}
\newcommand*\wtg{\widetilde{g}}
\newcommand*\eps{\varepsilon}
\newcommand*\pow{\int\limits_{W}}
\newcommand*\potw{\int\limits_{TW}}
\newcommand*\hnd{H_\nu^D(\mR^n)}
\newcommand*\vnd{V_\nu^D(\mR^n)}
\newcommand*\pd{\int\limits_D}
\newcommand*\pdc{\int\limits_{D^c}}
\theoremstyle{plain}
\newtheorem{thm}{Theorem}[section]
\newtheorem{prop}[thm]{Proposition}
\newtheorem{lem}[thm]{Lemma}	
\newtheorem{cor}[thm]{Corollary}
\newtheorem{defi}[thm]{Definition}
\theoremstyle{definition}
\newtheorem{exa}[thm]{Example}
\newtheorem*{rem}{Remark}
\DeclareMathOperator*{\dist}{dist}
\numberwithin{equation}{section}
\begin{document}
\title{The Dirichlet problem for nonlocal L\'evy-type operators}
\author{Artur Rutkowski}
\address{Artur Rutkowski, Faculty of Pure and Applied Mathematics, Wrocław University of Science and Technology, Wybrzeże Wyspiańskiego 27, 50-370 Wrocław, Poland}
\email{artur.rutkowski@pwr.edu.pl}
\thanks{The author is supported by the National Science Center (Poland) grant: DEC-2014/14/M/ST1/00600.}
\keywords{Dirichlet problem, nonlocal operator, maximum principle, weak solutions, extension operator}
\subjclass[2010]{35S15, 47G20, 60G51} 
\begin{abstract}
	We present the theory of the Dirichlet problem for nonlocal operators which are the generators of general pure-jump symmetric L\'evy processes whose L\'evy measures need not be absolutely continuous. We establish basic facts about the Sobolev spaces for such operators, in particular we prove the existence and uniqueness of weak solutions. We present strong and weak variants of maximum principle, and $L^\infty$ bounds for solutions. We also discuss the related extension problem in $C^{1,1}$ domains.
\end{abstract}

\maketitle

\section*{Introduction}
 We present results on existence, uniqueness and regularity of solutions to both weak and strong versions of the Dirichlet problem for nonlocal L\'{e}vy-type operators. Let $\nu$ be a nonnegative Borel measure on $\mR^n$, satisfying
\begin{equation}\label{Levy}
\nu(\{0\}) = 0, \ \ \ \nu(-A) = \nu(A), \ \ \ \hbox{and } \porn (1 \wedge |y|^2) d\nu(y) < \infty,
\end{equation}
for every Borel set $A\subseteq \mR^n$. $\nu$ is called the L\'evy measure. For the operator of the form
\begin{equation}\label{PV}
Lu(x) = \hbox{PV} \porn (u(x) - u(x+y))d\nu(y) := \rnbeze (u(x) - u(x+y))d\nu(y),
\end{equation}
we consider the weak version of the following ``boundary'' value problem:
\begin{equation}\label{DP}
\begin{cases}
Lu = f & \hbox{in } \Omega, \\
u = g & \hbox{in } \mR^n \backslash \Omega,
\end{cases}
\end{equation}
where $\Omega\subseteq \mR^n$ is nonempty, open, and bounded, and $f,g$ are given real functions.\\
\indent
There are important reasons to study operators of the form \eqref{PV}. One of them is the Courr\'ege theorem, which characterizes the operators satisfying the maximum principle, see \cite{Courr}, \cite{Schilling} - \eqref{PV} forms a representative subclass of such operators. Another reason is in the modeling of the real world phenomena, see \cite{RosOton} and the references therein.

The purpose of this article is to analyse the Dirichlet problem in detail, for the operators of the form $\eqref{PV}$, without assuming the absolute continuity of the measure $\nu$. The case of L\'evy measures with densities (and possible $x$-dependece) was investigated in the article by Felsinger, Kassmann and Voigt \cite{FKV}. The choice of topics which are included in our work is partly inspired by the survey paper of Ros-Oton \cite{RosOton}. We cover the technical details omitted in that paper, and extend the results to arbitrary symmetric L\'evy measures. We also discuss the related extension problem for Sobolev spaces.

We proceed as follows. In Section \ref{sec:operator} we give the basic facts about the operator $L$. Section \ref{sec:prob} shows how the problem \eqref{DP} can be used to study L\'evy processes. In Section \ref{sec:func} we introduce the quadratic form of the operator $L$. Domains of such forms - generalized Sobolev spaces will serve as the framework for the notion of weak solutions. The definition of these spaces - $\vkrn$ follows \cite{FKV}. In Section \ref{sec:weak} we present the results on weak solutions: existence, uniqueness, stability and connection with strong solutions. All of these facts are proved for general symmetric L\'evy measures. In the process of proving the existence and uniqueness result (Theorem \ref{exigen}), we establish the Poincar\'e inequality for arbitrary symmetric L\'evy measures (Theorem \ref{poingen}). The reader may find its proof interesting. We first show that the quadratic forms can be represented by the means of forms with discrete L\'evy measures (Lemma \ref{delty}). Then we show that the inequality holds for the atomic L\'evy measures - they let us effectively grasp the notion of jumping out of the set (Lemma \ref{poi}). In Section \ref{sec:comp} we prove the strong and weak versions of the maximum principle and obtain $L^\infty$ bounds for solutions using barriers. In Section \ref{sec:ext} we use an elementary geometric method to define the extension operator for isotropic, absolutely contiunous L\'evy measures with a mild scaling condition in $C^{1,1}$ domains. This operator turns out to be continuous between appropriate function spaces, as we argue in Theorem \ref{ext}. So far, this topic has been studied for the classical fractional Sobolev spaces, see \cite{Jonsson1978} and \cite{Zhou}, and fractional Sobolev spaces with relaxed exterior conditions \cite{BDMK}. See also, the article by Valdinoci et al. \cite{Sobolev}. The extension problem is strictly linked to the solvability of the Dirichlet problem, in accordance with the exterior condition $g$, cf. Corollary \ref{exicons}.\\
\indent
Irregular L\'evy measures caught some interest lately in the context of PDEs, see e.g. \cite{Jakobsen}. In the area of stochastic processes, operators with singular L\'evy measures can be used to investigate the processes whose jump intensity fails to have a density, e.g. processes with independent coordinates.
\\

\section{Preliminaries}\label{sec:operator}
We will use the following notation:
\linespread{0.25}
\begin{itemize}
	\item $Y^*$ - dual space of a Banach space $Y$,
	\item $(f,g) := \int\limits_{\Omega} f(x) g(x) dx$ - the scalar product in $L^2(\Omega)$,
	\item $(f,g)_D := \int\limits_{D} f(x) g(x) dx$ - the scalar product in $L^2(D)$ for other open sets $D$,
	\item $\langle f,g\rangle_\nu$ - see \eqref{DF},
	\item $x \wedge y = \min\{x,y\}$,
	\item $x \vee y = \max\{x,y\}$,
	\item $C_0(X)$ - continuous functions on a locally compact topological space $X$, vanishing at infinity (in the sequel $X$ will be an open subset of $\mR^n$ with the Euclidean topology),
	\item $C^n(X)$ - $n$ times continuously differentiable functions,
	\item $C_b^n(X)$ - functions from $C^n(X)$, with bounded derivatives of order up to $n$,
	\item $C^{\infty}(X)$ - infinitely many times continuously differentiable functions,
	\item $C^{\infty}_c(X)$ - functions from $C^{\infty}(X)$ with compact support,
	\item $L^0(X)$ - Borel measurable functions on $X$,
	\item $L^p(X)$ - equivalence classes (w.r.t. being equal a.e.) of functions with finite $L^p$-norm,
	\item $L^{\infty}(X)$ - equivalence classes of functions with finite essential supremum norm,
	\item $V_\nu^{D}(\mR^n)$ - see Definition \ref{hkomega},
	\item $H_\nu^{D}(\mR^n)$ - see Definition \ref{hkomega},
	\item $H_\nu(\mR^n)$ - see Definition \ref{hkomega},
	\item $D(L,x)$ - the set of functions $u$, for which $Lu(x)$ exists, $D(L,A) = \bigcap_{x\in A}D(L,x)$,
	\item $\nu_x$ - the shift of measure $\nu$ by $x$: $\nu_x(A) = \nu(A-x)$.
\end{itemize}
\linespread{1.25}

For every function space above we only consider real functions. When we write a.e. (almost everywhere) we mean the Lebesgue measure, unless stated otherwise. We would like to emphasize, that $\Omega$ is always a fixed nonempty, bounded, open set. Arbitrary open sets are usually denoted by the letter $D$.\\ \\
The results of this section are mostly well-known, however we present them for the sake of completeness of the presentation.\\
By \eqref{Levy}, $\nu$ is $\sigma$-finite. The symmetry yields $\porn u(x) d\nu(x) = \porn u(-x) d\nu(x)$, which we will often use without mention.
The nonlocality of the operator \eqref{PV} means that in order to compute $Lu$ at $x\in\Omega$, we use the values of $u$ in the support of $\nu_x$, i.e. possibly far from $x$, while the local operators (e.g., $\nabla$ or $\Delta$) only require the values from an arbitrarily small neighborhood of $x$. In what follows, we stipulate that $f\in L^2(\Omega)$, and refrain from making further assumptions on $f$ and $g$ until we reformulate the problem \eqref{DP} in the framework of Hilbert spaces in Sections \ref{sec:func} and \ref{sec:weak}. \\
By changing variables in \eqref{PV}, we obtain the following alternative form of the operator:
$$Lu(x) = \hbox{PV} \porn (u(x) - u(y))d\nu_x(y).$$
Note that for every $u\in C_b(\mR^n)$ and $\varepsilon > 0$, we have 
\begin{equation}\label{finit}
\int\limits_{B(0,\varepsilon)^c} |u(x) - u(x+y)|d\nu(y) \leq 2 \nu(B(0,\varepsilon)^c)  \|u\|_{\infty} < \infty.
\end{equation}
Our formula for the operator $L$ is pointwise, and it may depend on the value of the function in a single point. This is because the measure $\nu$ is not necessarily absolutely continuous. Therefore, the formula \eqref{PV} may yield different results for functions that are equal almost everywhere. This problem can be managed by considering $L$ in the global sense, as an operator on a function space.
\begin{prop}\label{glob}
If the functions $u,v$ are measurable, $u=v$ a.e. in $\mR^n$, and $Lu,Lv$ are well defined a.e. in $\Omega$, then $\|Lu - Lv\|_{L^2(\Omega)} = 0$, hence $Lu = Lv$ a.e. in $\Omega$. 
\end{prop}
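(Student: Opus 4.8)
The plan is to set $w := u - v$, so that $w = 0$ a.e.\ in $\mathbb{R}^n$, and to show that $w$ contributes nothing to the principal value at almost every point of $\Omega$; then $Lu = Lv$ a.e.\ in $\Omega$ is immediate, and so is $\|Lu-Lv\|_{L^2(\Omega)}=0$. The one genuinely nonlocal issue is that, since $\nu$ need not be absolutely continuous, for a fixed $x$ the set $\{y : w(x+y)\neq 0\}$ could have positive $\nu$-measure; the heart of the matter is to show this fails for a.e.\ $x\in\Omega$.

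To handle this I would fix $\varepsilon>0$. Since $\Omega$ is bounded and $\nu$ restricted to $B(0,\varepsilon)^c$ is finite (by \eqref{Levy}, as $1\wedge|y|^2$ is bounded below there), Tonelli's theorem applies to the nonnegative, jointly measurable function $(x,y)\mapsto|w(x+y)|$ (passing to a Borel representative of $w$ if needed), giving
\begin{align*}
\int\limits_\Omega \int\limits_{B(0,\varepsilon)^c} |w(x+y)|\,d\nu(y)\,dx
&= \int\limits_{B(0,\varepsilon)^c} \Big(\,\int\limits_{\Omega} |w(x+y)|\,dx\Big)\,d\nu(y) \\
&= \int\limits_{B(0,\varepsilon)^c} \Big(\,\int\limits_{\Omega+y} |w(z)|\,dz\Big)\,d\nu(y) = 0,
\end{align*}
because $w=0$ a.e.\ forces each inner Lebesgue integral to vanish. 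Hence for each $\varepsilon>0$ there is a Lebesgue-null set off which $\int_{B(0,\varepsilon)^c}|w(x+y)|\,d\nu(y)=0$. Applying this with $\varepsilon=1/k$ and taking the union over $k\in\mathbb{N}$, together with the null set $\{x:w(x)\neq0\}$, I obtain a Lebesgue-null set $N\subseteq\Omega$ such that for every $x\in\Omega\setminus N$ one has $w(x)=0$ and $\int_{B(0,1/k)^c}|w(x+y)|\,d\nu(y)=0$ for all $k$.

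Finally, let $A\subseteq\Omega$ be the set of $x$ for which both $Lu(x)$ and $Lv(x)$ exist; by hypothesis $|\Omega\setminus A|=0$. For $x\in A$ and every $\varepsilon>0$, the truncated integrals $\int_{B(0,\varepsilon)^c}(u(x)-u(x+y))\,d\nu(y)$ and $\int_{B(0,\varepsilon)^c}(v(x)-v(x+y))\,d\nu(y)$ are finite (the terms $u(x)\nu(B(0,\varepsilon)^c)$ and $v(x)\nu(B(0,\varepsilon)^c)$ being finite by \eqref{Levy}), so they may be subtracted; for $x\in A\setminus N$ and $\varepsilon=1/k$ their difference equals $\int_{B(0,1/k)^c}(w(x)-w(x+y))\,d\nu(y)=0$ by the previous paragraph. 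Letting $k\to\infty$ and using that both principal values exist, $Lu(x)-Lv(x)=0$. Since $A\setminus N$ has full measure in $\Omega$, this yields $Lu=Lv$ a.e.\ in $\Omega$, hence $\|Lu-Lv\|_{L^2(\Omega)}=0$. The main, if modest, obstacle is the bookkeeping hidden in ``well defined'': one needs the truncated integrals to be genuinely finite in order to split the difference, and one must run the principal-value limit along a single fixed sequence $\varepsilon=1/k$ so that one null set $N$ serves all scales simultaneously.
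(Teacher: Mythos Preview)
Your proof is correct and takes a genuinely different route from the paper's. The paper estimates $\|Lu-Lv\|_{L^2(\Omega)}^2$ directly: it writes the square of the truncated difference, applies Cauchy--Schwarz to get a factor $\nu(B(0,\varepsilon)^c)$, pulls the limit outside by monotone convergence, then swaps the $x$- and $y$-integrals by Fubini so that the inner Lebesgue integral vanishes for every $y$. Your argument is more pointwise: you show via Tonelli that for a.e.\ $x\in\Omega$ the integrands $u(x)-u(x+y)$ and $v(x)-v(x+y)$ agree $\nu$-a.e.\ on each $B(0,1/k)^c$, so the truncated integrals themselves coincide, and hence so do the principal values. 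This avoids the Cauchy--Schwarz/monotone-convergence machinery entirely and delivers the a.e.\ pointwise equality $Lu=Lv$ first, with the $L^2$ statement as a trivial consequence.

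One small remark: your parenthetical ``the terms $u(x)\nu(B(0,\varepsilon)^c)$ and $v(x)\nu(B(0,\varepsilon)^c)$ being finite'' does not by itself make the truncated integrals finite, since $\int_{B(0,\varepsilon)^c}|u(x+y)|\,d\nu(y)$ could a priori diverge. But this is harmless for your argument: once you know that for $x\in A\setminus N$ the two integrands agree $\nu$-a.e.\ on $B(0,1/k)^c$, the truncated integrals are equal as extended real numbers, and the existence of both principal values forces them to be finite for small $\varepsilon$ anyway. You could simply drop that parenthetical and note instead that equality of the integrands $\nu$-a.e.\ already gives equality of the truncated integrals.
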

\begin{proof}
Since $Lu,Lv$ are well-defined and finite a.e., we have
\begin{equation}\label{ll2}
\poom (Lu(x) - Lv(x))^2 dx = \poom  \lim\limits_{\eps \to 0^+} \left(\int\limits_{B(0,\eps)^c} (u-v)(x) - (u-v)(x+y) d\nu(y)\right)^2 dx.
\end{equation}
Using H\"{o}lder's inequality, the monotone convergence therorem, and Fubini's theorem, we can estimate \eqref{ll2} as follows 
\begin{align*}
&\poom  \lim\limits_{\eps \to 0^+} \left(\int\limits_{B(0,\eps)^c} (u-v)(x) - (u-v)(x+y) d\nu(y)\right)^2 dx\\
&\leq \poom\lim\limits_{\eps \to 0^+} \nu(B(0,\eps)^c) \int\limits_{B(0,\eps)^c} ((u-v)(x) - (u-v)(x+y))^2d\nu(y) dx\\
&= \lim\limits_{\eps \to 0^+} \poom \nu(B(0,\eps)^c)\int\limits_{B(0,\eps)^c} ((u-v)(x) - (u-v)(x+y))^2 d\nu(y) dx\\
& = \lim\limits_{\eps\to 0^+} \int\limits_{B(0,\eps)^c}\nu(B(0,\eps)^c) \poom ((u-v)(x) - (u-v) (x+y))^2 dx d\nu(y).
\end{align*}
Since the inner integral is equal to $0$ for every $y\in \mR^n$, the proposition is proved.
\end{proof}
The next result gives an insight into the domain of $L$.
\begin{prop}\label{smthint}
If $u\in C^2_b(\mR^n)$, then $Lu(x)$ is well defined for $x\in\mR^n$, and $Lu \in L^2(\Omega)$.
\end{prop}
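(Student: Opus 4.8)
The plan is to split the principal-value integral at the scale $|y|=1$ and handle the near and far parts separately, using the $C^2_b$ bound near the origin and the finiteness $\nu(B(0,1)^c)<\infty$ away from it. First I would write, for $x\in\mR^n$,
\begin{equation*}
Lu(x) = \rnbeze (u(x)-u(x+y))\,d\nu(y) = I_1(x) + I_2(x),
\end{equation*}
where $I_1(x)=\int_{B(0,1)\setminus B(0,\eps)}(u(x)-u(x+y))\,d\nu(y)$ (still with the limit in $\eps$) and $I_2(x)=\int_{B(0,1)^c}(u(x)-u(x+y))\,d\nu(y)$. By \eqref{finit}, $I_2$ is absolutely convergent with $|I_2(x)|\le 2\nu(B(0,1)^c)\|u\|_\infty<\infty$, so it contributes a bounded term. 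For $I_1$, I would use the symmetry of $\nu$ to replace the integrand by its symmetrized version: since $\int_{B(0,\eps)^c\cap B(0,1)}(u(x+y)-u(x-y))\,d\nu(y)=0$ by oddness, one can rewrite
\begin{equation*}
I_1(x) = \tfrac12 \lim_{\eps\to 0^+}\int\limits_{B(0,1)\setminus B(0,\eps)} \bigl(2u(x)-u(x+y)-u(x-y)\bigr)\,d\nu(y).
\end{equation*}
A second-order Taylor expansion gives $|2u(x)-u(x+y)-u(x-y)| \le \|D^2 u\|_\infty |y|^2$, and since $\int_{B(0,1)}|y|^2\,d\nu(y)<\infty$ by \eqref{Levy}, the integrand is dominated by an integrable function independent of $\eps$; hence the limit exists (dominated convergence) and $|I_1(x)|\le \tfrac12\|D^2u\|_\infty\int_{B(0,1)}|y|^2\,d\nu(y)$. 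This shows $Lu(x)$ is well defined for every $x\in\mR^n$.

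For the $L^2(\Omega)$ claim, I would combine the two pointwise bounds into
\begin{equation*}
|Lu(x)| \le \tfrac12\|D^2u\|_\infty \int\limits_{B(0,1)}|y|^2\,d\nu(y) + 2\|u\|_\infty\,\nu(B(0,1)^c) =: M,
\end{equation*}
a constant independent of $x$. Since $\Omega$ is bounded, $\|Lu\|_{L^2(\Omega)}\le M\,|\Omega|^{1/2}<\infty$, so $Lu\in L^2(\Omega)$. (Measurability of $Lu$ is routine: each truncated integral is measurable in $x$ by Fubini, and $Lu$ is their pointwise limit.)

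The only delicate point is justifying the symmetrization and the passage to the limit simultaneously, i.e. making sure the cancellation that removes the first-order term is legitimate before taking $\eps\to 0^+$. This is fine because for each fixed $\eps>0$ the region $B(0,1)\setminus B(0,\eps)$ is bounded away from the origin, so $\nu$ restricted there is finite and the odd part integrates to zero exactly; only after this exact cancellation do we invoke the Taylor estimate and dominated convergence to let $\eps\to 0^+$. No interchange of limit and the $x$-integral is needed for well-definedness, and the $L^\infty$ bound makes the $L^2(\Omega)$ conclusion immediate from boundedness of $\Omega$.
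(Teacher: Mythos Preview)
Your proof is correct and follows essentially the same approach as the paper: symmetrize the integrand using the symmetry of $\nu$, control the second difference $2u(x)-u(x+y)-u(x-y)$ by a Taylor estimate, and use the L\'evy integrability condition together with boundedness of $\Omega$ to conclude $Lu\in L^2(\Omega)$. The only cosmetic difference is that the paper symmetrizes the entire PV integral at once and bounds the second difference by $C(1\wedge|y|^2)$ globally, whereas you split at $|y|=1$ and treat the far part directly via boundedness of $u$; these two organizations are equivalent.
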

\begin{proof}
Let $u\in C^2_b(\mR^n)$. Substituting $-y$ for $y$ in \eqref{PV} and adding side by side gives
\begin{equation}\label{C2}
Lu(x) = \frac 12 \rnbeze (2u(x) - u(x+y) - u(x-y)) d\nu(y).
\end{equation}
By Taylor's expansion, for $x,y \in \mR^n$: 
\begin{align*}2u(x) - u(x+y) - u(x-y) = 2u(x) &- \left[u(x) + y\circ \nabla u(x) + \sum\limits_{i,j=1}^n\frac{\partial^2 u (\xi)}{\partial x_i \partial x_j} y_i y_j\right] \\&- \left[u(x) - y\circ \nabla u(x) + \sum\limits_{i,j=1}^n\frac{\partial^2 u(\xi)}{\partial x_i \partial x_j} y_i y_j\right]
\end{align*}
\begin{equation*}
\quad = -2\sum\limits_{i,j=1}^n\frac{\partial^2 u(\xi)}{\partial x_i \partial x_j} y_i y_j,
\end{equation*}
where $\xi \in B(x,|y|)$. Since $u\in C^2_b(\mR^n)$, we obtain
\begin{equation}\label{linf}|2u(x) - u(x+y) - u(x-y)| \leq C (1 \wedge |y|^2),
\end{equation}
for a number $C > 0$ independent of $x$, i.e. a constant. As a consequence, $\porn(2u(x) - u(x+y) - u(x-y)) d\nu(y)$ converges absolutely. By the dominated convergence theorem,
\begin{equation}\label{NOPV}
Lu (x) = \rnbeze (u(x) - u(x+y))d\nu(y) = \frac 12 \porn(2u(x) - u(x+y) - u(x-y)) d\nu(y).
\end{equation}
Furthermore,
\begin{align*}
\poom Lu(x)^2 dx &= \poom\left( \frac 12 \porn (2u(x) - u(x+y) - u(x-y))d\nu(y)\right)^2 dx\\
&\leq \left(\porn (1\wedge |y|^2) d\nu(y) \right)^2\poom \frac{C^2}4 dx.
\end{align*}
Since $\Omega$ is bounded, $Lu \in L^2(\Omega)$.
\end{proof}
\begin{rem}
Note that the estimate \eqref{linf} also shows that $Lu \in L^{\infty}(\Omega)$.
\end{rem}

\section{Connection with L\'{e}vy processes}\label{sec:prob}
In this section we will provide a probabilistic motivation for studying the Dirichlet problem \eqref{DP}, and an explanation for the assumptions in \eqref{Levy}. For further details we refer to chapters 1, 6, 8 of \cite{Sato}, and chapters I-V of \cite{Dynkin}.
\begin{defi}
We call an $\mR^n$-valued stochastic process $(X_t)_{t\geq 0}$ a L\'{e}vy process, if it is stochastically continuous and has stationary independent increments. 
\end{defi}
For an $\mR^n$-valued L\'{e}vy process we have the family of transition probabilities: $p_t(x,A) = \mathbb{P}^x(X_t \in A) = \mP(X_t\in A | X_0 = x)$. They yield a strongly continuous semigroup of contractions on $C_0(\mR^n)$: $p_t f (x) = \porn f(y) p_t(x,dy)$.
Recall that the generator $G$ of a strongly continuous semigroup of contractions $(p_t)_{t\geq 0}$ on a Banach space is
$$Gu(x) =  \lim_{t\to 0} \frac{p_t u(x) - u(x)}{t},$$
with the limit, if it exists, taken in the norm of the Banach space.
If the contraction semigroup is associated to a L\'{e}vy process, then we also say that $G$ is the generator of the process. The following result is well-known:
\begin{thm}
Let $G$ be the generator of a L\'{e}vy process. Then, for every $u \in C^2_0(\mR^n)$,
\begin{equation*}
Gu(x) = \sum\limits_{i=1}^n b_i u_{x_i}(x) + \frac 12 \sum\limits_{i,j=1}^n a_{ij} u_{x_ix_j} (x) + \porn (u(x+y) - u(x) - y\circ \nabla u(x)\cdot \textbf{\upshape{1}}_{|y|<1}) d\nu (y),
\end{equation*}
where $A = [a_{ij}]$ is a symmetric nonnegative-definite matrix, $[b_i] \in \mR^n$, and the (L\'{e}vy) measure $\nu$ satisfies $\porn (1\wedge |y|^2) d\nu (y) < \infty$.
\end{thm}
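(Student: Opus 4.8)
The statement is the L\'evy--Khintchine representation of the generator of a L\'evy process, so the plan is to reduce it to the L\'evy--Khintchine theorem for infinitely divisible laws and then differentiate the semigroup $(p_t)$ at $t=0$. First, for each $t>0$ and $m\in\mN$ the stationarity and independence of the increments exhibit $X_t$ as a sum of $m$ i.i.d.\ copies of $X_{t/m}$, so $X_t$ is infinitely divisible; by the L\'evy--Khintchine theorem (see \cite{Sato}) there is a unique triple $(A,b,\nu)$, with $A=[a_{ij}]$ symmetric nonnegative-definite, $b=[b_i]\in\mR^n$, and $\nu$ a measure satisfying \eqref{Levy}, such that
\[
\mE^0 e^{i\xi\circ X_t}=e^{-t\psi(\xi)},\qquad \psi(\xi)=-ib\circ\xi+\tfrac12\,\xi\circ A\xi+\porn\bigl(1-e^{i\xi\circ y}+i\xi\circ y\,\mathbf 1_{|y|<1}\bigr)\,d\nu(y).
\]
From $\porn(1\wedge|y|^2)\,d\nu(y)<\infty$ one checks that $\psi$ is continuous, $\mathrm{Re}\,\psi\ge 0$, and $|\psi(\xi)|\le C(1+|\xi|^2)$.

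Next I would differentiate $p_t$ on Schwartz functions using the Fourier transform. For $u\in\mathcal S(\mR^n)$, writing $u$ by Fourier inversion and interchanging expectation with the $\xi$-integral (legitimate since $\widehat u\in L^1$ and $|e^{i\xi\circ X_t}|=1$) gives $p_tu(x)=\mE^0 u(x+X_t)=(2\pi)^{-n}\porn\widehat u(\xi)\,e^{i\xi\circ x}\,e^{-t\psi(\xi)}\,d\xi$, hence
\[
\frac{p_tu(x)-u(x)}{t}=(2\pi)^{-n}\porn\widehat u(\xi)\,e^{i\xi\circ x}\,\frac{e^{-t\psi(\xi)}-1}{t}\,d\xi .
\]
Since $|e^{-t\psi(\xi)}-1|\le t|\psi(\xi)|\le tC(1+|\xi|^2)$, $\frac{e^{-t\psi(\xi)}-1}{t}\to-\psi(\xi)$ pointwise, and $(1+|\xi|^2)\,\widehat u\in L^1(\mR^n)$, dominated convergence yields convergence \emph{uniformly in $x$}; thus $u\in D(G)$ and $Gu$ is the Fourier multiplier operator with symbol $-\psi$. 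It remains to match that multiplier with the claimed integro-differential operator term by term: $i\xi_j\leftrightarrow\partial_{x_j}$ produces the drift $b\circ\nabla u$; $(i\xi_j)(i\xi_k)\leftrightarrow\partial_{x_j}\partial_{x_k}$ produces $\tfrac12\sum_{i,j}a_{ij}u_{x_ix_j}$; and $e^{i\xi\circ y}\widehat u(\xi)=\widehat{u(\cdot+y)}(\xi)$ turns the jump part of $-\psi$ into $\porn\bigl(u(x+y)-u(x)-y\circ\nabla u(x)\,\mathbf 1_{|y|<1}\bigr)\,d\nu(y)$; the interchange of the $d\nu$-integral with Fourier inversion is justified by the pointwise bound $|1-e^{i\xi\circ y}+i\xi\circ y\,\mathbf 1_{|y|<1}|\le C(1\wedge|y|^2)(1+|\xi|^2)$ together with $\porn(1\wedge|y|^2)\,d\nu<\infty$ and $(1+|\xi|^2)\widehat u\in L^1$. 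This proves the formula for $u\in\mathcal S(\mR^n)$.

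To pass from $\mathcal S(\mR^n)$ to $C^2_0(\mR^n)$, denote by $\tilde L u$ the right-hand side of the asserted identity. A Taylor estimate exactly as in \eqref{linf} shows $|u(x+y)-u(x)-y\circ\nabla u(x)\,\mathbf 1_{|y|<1}|\le C(1\wedge|y|^2)(\|u\|_\infty+\|D^2u\|_\infty)$, so $\tilde L$ maps $C^2_0(\mR^n)$ continuously into $C_0(\mR^n)$ (continuity of $\tilde L u$ in $x$ following from dominated convergence). Given $u\in C^2_0(\mR^n)$, pick $u_k\in\mathcal S(\mR^n)$ with $u_k\to u$ in the $C^2_0$-norm (mollify and cut off); then $Gu_k=\tilde L u_k\to\tilde L u$ uniformly while $u_k\to u$ uniformly, and since $(p_t)$ is a strongly continuous contraction semigroup its generator $G$ is closed, so $u\in D(G)$ and $Gu=\tilde L u$.

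The genuinely hard input is the L\'evy--Khintchine theorem used in the first step --- existence and uniqueness of the triple $(A,b,\nu)$ --- which I take as classical. Beyond that, the delicate point is the \emph{uniform} (sup-norm) convergence in the second step: because $|\psi|$ grows quadratically, $\frac{e^{-t\psi}-1}{t}$ is not bounded in $\xi$, and it is precisely the two available derivatives (equivalently $(1+|\xi|^2)\widehat u\in L^1$) that make the dominated-convergence argument go through, which is also why $C^2$ appears in the statement. An alternative, purely probabilistic route avoids Fourier analysis: apply the L\'evy--It\^o decomposition to split $X_t$ into a Brownian component, a compensated small-jumps martingale, and a large-jumps compound Poisson part, Taylor-expand $u(x+X_t)-u(x)$, and take expectations; there the obstacle becomes controlling the compensated-jumps remainder uniformly in $x$, but it again rests on $\porn(1\wedge|y|^2)\,d\nu<\infty$.
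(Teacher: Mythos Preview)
Your argument is a correct and standard derivation of the L\'evy--Khintchine form of the generator: reduce to the L\'evy--Khintchine formula for the characteristic exponent, identify $G$ as the Fourier multiplier $-\psi$ on $\mathcal S(\mR^n)$ via dominated convergence, read off the integro-differential expression term by term, and then extend to $C^2_0(\mR^n)$ by closedness of the generator. The only place I would tighten is the approximation step: make explicit that you are taking $C^2_0(\mR^n)$ to mean $u$, $\nabla u$, $D^2 u\in C_0(\mR^n)$ (which is the usage in \cite{Sato}), since that is what guarantees mollified cut-offs converge in the $C^2$-uniform norm and hence that $\tilde L u_k\to\tilde L u$ in $C_0$, as needed for the closed-graph argument.

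As for comparison with the paper: there is nothing to compare. The paper does \emph{not} prove this theorem; it is quoted as ``well-known'' and the reader is referred to \cite{Sato} (see the sentence preceding the statement and the pointer to chapters 1, 6, 8 at the beginning of Section~\ref{sec:prob}). Your proposal thus supplies exactly the proof the paper omits, and it follows the same route as the reference the paper cites.
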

\noindent The L\'{e}vy measure can be understood as the intensity of jumps of the process $X_t$.\\
We want to discuss only pure jump processes therefore we drop the drift (first derivatives) and diffusion (second derivatives), ending up with
\begin{equation}\label{JUMP}
Gu(x) = \porn (u(x+y) - u(x) - y\circ \nabla u(x)\cdot \textbf{1}_{|y| < 1})  d\nu (y).
\end{equation}
If $\nu$ is symmetric, we obtain an operator of the form \eqref{NOPV}. Namely, we have
\begin{equation}
-Gu(x) = \frac 12 \porn (2u(x) - u(x+y) - u(x-y)) d\nu(y).
\end{equation}
Thus, $L = -G$, cf. \eqref{NOPV}. We note that $L = -G$ is positive definite.
\begin{exa} The Dirichlet problem arises when studying exit times for L\'{e}vy processes. 
Let $s(x) = \mathbb{E}^x \tau_{\Omega}$, where $\tau_{\Omega} = \inf\{t \geq 0: X_t \notin \Omega\}$ is the first exit time from a nonempty bounded open $\Omega\subset \mR^n$ for the L\'{e}vy process $(X_t)$ with the generator $G$. Then $s$ satisfies
\begin{equation}\label{exit}
\begin{cases}
-Gs = 1 & \hbox{in } \Omega, \\
s = 0 & \hbox{in } \Omega^C.
\end{cases}
\end{equation}
While the second equality is trivial, the first one requires auxiliary notions and results from probabilistic potential theory, therefore we skip the details of this connection. We will, however, make related calculations in Example \ref{exbarr}. More generally, $h(x) = \mE^x g(X_{\tau_{\Omega}}) - \mE^x \int\limits_0^{\tau_{\Omega}}f(X_t)dt$ solves \eqref{DP}. For an elegant derivation of this fact, see chapter V in Dynkin's book \cite{Dynkin}.
\end{exa}

\begin{defi}
For $\alpha \in (0,2)$ we define $C = \frac {2^\alpha\Gamma(\frac {n+\alpha}2)}{\pi^{\frac n2}|\Gamma(-\frac \alpha 2)|} $ and
$$(-\Delta)^{\frac \alpha 2}u(x) = C^{-1} \cdot \mathrm{PV} \porn \frac {u(x) - u(x+y)}{|y|^{n+\alpha}} dy.$$
\end{defi}
The generator of the isotropic $\alpha$-stable process $(\alpha \in (0,2))$ is the fractional Laplacian $-(-\Delta)^{\frac{\alpha} 2}$. Here the L\'{e}vy measure is absolutely continuous w.r.t. Lebesgue measure with the density function (kernel) $K(y) = C^{-1} \frac 1{|y|^{n+\alpha}}$. Stable processes, with their generators, are a natural nonlocal extension of the Brownian motion and its generator - the classical Laplacian. That is why these objects draw a great deal of attention of the researchers from fields of analysis, PDE-s, and stochastic processes. For more information about stable processes we refer to \cite{MR2569321}.\\
In the probabilistic context it is sometimes stressed that the L\'evy measure should span the whole $\mR^n$ space, i.e. its support should not be contained in a proper subspace of $\mR^n$. Otherwise, there would be little reason to consider the given process as a process in $\mR^n$.

\section{Function spaces}\label{sec:func}
The methods of the Hilbert spaces, in particular the quadratic forms, provide us with a convenient framework for solving the weak variant of \eqref{DP}. Before we define the appropriate function spaces, we need to conduct calculations similar to those from Proposition \ref{glob}. Let $D$ be a nonempty open set, and let the functions $u_1,u_2$ be equal a.e. in $\mR^n$. Assume that
\begin{align*}
\int\limits_D \porn (u_i(x) - u_i(x+y))^2 d\nu(y) dx < \infty, \ \ \ \ \hbox{i = 1,2}.
\end{align*}
By Fubini-Tonelli theorem we have,
\begin{align*}
&\int\limits_D \porn ((u_1 - u_2)(x) - (u_1 - u_2)(x+y))^2 d\nu(y) dx \\
&= \porn\int\limits_D ((u_1 - u_2)(x) - (u_1 - u_2)(x+y))^2 dx d\nu(y),
\end{align*}
which is equal to 0 since $u_1 = u_2$ a.e. This fact lets us operate on equivalence classes of functions, even when $\nu$ is singular. The next definition follows \cite{FKV}.
\begin{defi}\label{hkomega} For a nonempty open (not necessarily bounded) $D \subseteq \mR^n$, we define the function spaces 
\begin{align*}
V_\nu^{D}(\mR^n) &= \left\{u\in L^0(\mR^n) : \|u\|_{\vnd} < \infty\right\},
\\
H_\nu^{D}(\mR^n) &= \left\{u\in V_\nu^{D}(\mR^n): u \equiv 0 \hbox{ a.e. in }\mR^n \backslash D\right\},
\end{align*}
where
$$\|u\|_{V_\nu^{D}(\mR^n)} = \sqrt{\|u\|_{L^2(D)}^2 + \frac 12 \int\limits_D \porn (u(x) - u(y))^2 d\nu_y(x) dy}.$$
Furthermore, we let
$$H_\nu(\mR^n) := V_\nu^{\mR^n}(\mR^n).$$
In particular, $\|u\|_{\hrn} = \|u\|_{V^{\mR^n}_{\nu}(\mR^n)}$. These spaces are called the (generalized) Sobolev spaces. For $u,v \in \hrn$, we write
\begin{align}
\langle u,v\rangle_\nu &=  \frac 12 \porn\porn (u(x) - u(y))(v(x) - v(y)) d\nu_y(x) dy\label{DF} \\
&= \frac 12 \porn\porn (u(y) - u(x+y))(v(y) - v(x+y))d\nu(x) dy,\nonumber
\end{align}
so that the norm on $H_\nu(\mR^n)$ can be rewritten as $\|u\|_{H_\nu(\mR^n)} = \sqrt{\|u\|_{L^2(\mR^n)}^2 + \langle u,u\rangle_\nu}$.
\end{defi}
\noindent In literature, the expression $\|u\|_{L^2(\Omega)} + \langle u,u\rangle_\nu$ is sometimes referred to as $\mathcal{E}_1(u,u)$. $\langle u,u\rangle_\nu$ is called the quadratic form of the operator $L$. It was well studied in the context of nonlinear equations in \cite{TEJ}.\\
\noindent Note that $\hnd \subseteq L^2(\mR^n)$ and $H_\nu(\mR^n) \subseteq \vnd$ for every $D$.
The following identity will be used frequently.
\begin{lem}\label{sym}
	For every $u\in L^0(\mR^n)$,
	$$\porn \int\limits_D (u(x) - u(y))^2 d\nu_y(x) dy = \int\limits_D \porn (u(x) - u(y))^2 d\nu_y(x) dy.$$
\end{lem}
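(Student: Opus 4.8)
The identity is a Tonelli-type statement: both sides are iterated integrals of the non-negative function $(x,y)\mapsto(u(x)-u(y))^2$, the only subtlety being that the inner measure $\nu_y$ depends on the outer variable $y$, so one cannot invoke Fubini--Tonelli directly. The plan is to decouple the two variables. Since $\nu_y$ is the pushforward of $\nu$ under the translation $z\mapsto y+z$, for any non-negative Borel $g$ one has $\porn g(x)\,d\nu_y(x)=\porn g(y+z)\,d\nu(z)$. Applying this with $g(x)=\mathbf{1}_D(x)(u(x)-u(y))^2$ rewrites the left-hand side as
$$\porn\porn\mathbf{1}_D(y+z)\,(u(y+z)-u(y))^2\,d\nu(z)\,dy$$
and the right-hand side as
$$\porn\porn\mathbf{1}_D(y)\,(u(y+z)-u(y))^2\,d\nu(z)\,dy .$$
Both are now iterated integrals against the genuine product measure $\nu\otimes dy$, and since $\nu$ is $\sigma$-finite (by \eqref{Levy}) and Lebesgue measure is $\sigma$-finite, Tonelli's theorem lets us compute them as double integrals and interchange the order of integration freely.

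It then remains to show that the two double integrals produce the same value. Starting from the right-hand expression, I would integrate in $y$ first and, for each fixed $z$, substitute $y\mapsto y-z$ using translation invariance of Lebesgue measure; this turns $\mathbf{1}_D(y)(u(y+z)-u(y))^2$ into $\mathbf{1}_D(y-z)(u(y)-u(y-z))^2$. Next, substituting $z\mapsto -z$ in the outer integral and using the symmetry $\nu(-A)=\nu(A)$ turns this into $\mathbf{1}_D(y+z)(u(y+z)-u(y))^2$, which is precisely the left-hand integrand. Interchanging the order of integration once more (Tonelli again) recovers the left-hand side.

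The argument requires no assumption on $u$ beyond measurability: the integrand is non-negative, so both sides are well-defined elements of $[0,\infty]$ and the manipulations above are valid regardless of whether the common value is finite. The only points to verify are routine: measurability of $(z,y)\mapsto \mathbf{1}_D(y+z)(u(y+z)-u(y))^2$ (a composition of the measurable map $(z,y)\mapsto(y+z,y)$ with Borel functions) and the $\sigma$-finiteness needed for Tonelli. The one thing to be careful about --- the ``main obstacle'', such as it is --- is to resist applying Fubini--Tonelli to the measure $\nu_y(dx)\,dy$ before performing the substitution $x=y+z$; equivalently, one may phrase the whole proof by observing that $\nu_y(dx)\,dy$ is a symmetric measure on $\mR^{2n}$, invariant under $(x,y)\mapsto(y,x)$ (again by symmetry of $\nu$ together with translation invariance of Lebesgue measure), so that the lemma is just this symmetry applied to the function $\mathbf{1}_D(x)(u(x)-u(y))^2$.
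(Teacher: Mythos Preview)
Your proposal is correct and follows essentially the same route as the paper: decouple the variables via the substitution $x=y+z$ so that the integrals are against the product measure $\nu\otimes dy$, then use Tonelli, translation invariance of Lebesgue measure, and the symmetry of $\nu$ to pass from one side to the other. The paper carries out exactly this chain of equalities (going from the left-hand side to the right rather than the reverse), and your alternative observation that $\nu_y(dx)\,dy$ is invariant under $(x,y)\mapsto(y,x)$ is simply a repackaging of the same computation.
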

\begin{proof}
	By Fubini-Tonelli theorem, translation invariance of Lebesgue measure, and the symmetry of $\nu$, we get
	\begin{align*}
	&\porn\pd (u(x) - u(y))^2 d\nu_y(x) dy = \porn \porn (u(x+y) - u(y))^2 \textbf{1}_{D}(x+y) d\nu(x) dy \\
	&=\porn\porn (u(x+y) - u(y))^2  \textbf{1}_{D}(x+y) dy d\nu(x) = \porn \porn (u(y) - u(y-x))^2\textbf{1}_{D}(y) dy d\nu(x) \\
	&=\porn\porn (u(y) - u(y+x))^2\textbf{1}_{D}(y) dy d\nu(x) = \pd\porn (u(x+y) - u(y))^2 d\nu(x) dy\\
	&= \pd\porn (u(x) - u(y))^2 d\nu_y(x) dy.
	\end{align*}
\end{proof}
For $u\in V_\nu^D(\mR^n)$ we can easily conclude that the corresponding integrals over $D\times D^c$ and $D^c\times D$ are also equal. Reader interested in more general results of this type may consult \cite[Lemma 6.4.]{Jakobsen}.
\begin{cor}
	For every $u \in \hnd$, we have $\|u\|_{\vnd} \leq \|u\|_{H_\nu(\mR^n)} \leq 2 \|u\|_{\vnd}$, i.e. the norms are equivalent on $\hnd$. In particular, $\hnd \subseteq H_\nu(\mR^n)$  
\end{cor}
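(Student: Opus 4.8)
The plan is to compare the two norms term by term after splitting the double integral in $\langle u,u\rangle_\nu$ according to where each of the two variables sits, in $D$ or in $D^c$. The lower bound will then be essentially free, and the upper bound (with room to spare) will follow once the $D^c\times D^c$ piece is shown to vanish and the two mixed pieces are matched via Lemma \ref{sym}.

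Let $u\in\hnd$. Since $u\equiv 0$ a.e.\ in $\mR^n\setminus D$, the $L^2$ parts of the two norms coincide: $\|u\|_{L^2(\mR^n)}=\|u\|_{L^2(D)}$. For open $A,B\subseteq\mR^n$ write $J_{AB}=\int_A\int_B(u(x)-u(y))^2\,d\nu_y(x)\,dy$, the outer integration in $y$ over $A$ and the inner in $x$ over $B$. Then the double integral defining $\|u\|_{\hrn}^2$ is $J_{DD}+J_{DD^c}+J_{D^cD}+J_{D^cD^c}$, whereas the one defining $\|u\|_{\vnd}^2$ is $J_{DD}+J_{DD^c}$ (and $u\in\vnd$ guarantees both of these summands are finite).

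The first substantive step is $J_{D^cD^c}=0$. Substituting $x=y+z$ in the inner integral and applying Fubini--Tonelli (the integrand being nonnegative) turns $J_{D^cD^c}$ into $\int_{\mR^n}\int_{D^c}(u(y+z)-u(y))^2\,\mathbf{1}_{D^c}(y+z)\,dy\,d\nu(z)$; for each fixed $z$ the inner integrand is $0$ for a.e.\ $y$, because $u=0$ a.e.\ on $D^c$ and translation preserves Lebesgue-null sets. This is precisely the mechanism already exploited in the computation preceding Definition \ref{hkomega}, and it is the one spot where the possible singularity of $\nu$ must be treated with care — everything else is bookkeeping. The second step is $J_{DD^c}=J_{D^cD}$, which is exactly the equality of the integrals over $D\times D^c$ and $D^c\times D$ noted right after Lemma \ref{sym} (apply that lemma and cancel the common term $J_{DD}$).

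Putting these together, the double integral in $\|u\|_{\hrn}^2$ equals $J_{DD}+2J_{DD^c}$, so $\|u\|_{\hrn}^2=\|u\|_{L^2(D)}^2+\tfrac12 J_{DD}+J_{DD^c}$ while $\|u\|_{\vnd}^2=\|u\|_{L^2(D)}^2+\tfrac12 J_{DD}+\tfrac12 J_{DD^c}$. Subtracting, $\|u\|_{\hrn}^2-\|u\|_{\vnd}^2=\tfrac12 J_{DD^c}\ge 0$ gives $\|u\|_{\vnd}\le\|u\|_{\hrn}$, and $2\|u\|_{\vnd}^2-\|u\|_{\hrn}^2=\|u\|_{L^2(D)}^2+\tfrac12 J_{DD}\ge 0$ gives $\|u\|_{\hrn}\le\sqrt2\,\|u\|_{\vnd}\le 2\|u\|_{\vnd}$, which is even slightly stronger than the stated bound. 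In particular $\|u\|_{\hrn}<\infty$, so $u\in\hrn$ and hence $\hnd\subseteq\hrn$. I expect no real obstacle beyond the careful handling of the a.e.\ arguments establishing $J_{D^cD^c}=0$.
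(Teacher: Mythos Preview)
Your proof is correct and follows essentially the same approach as the paper: both split the full double integral into the four pieces $D\times D$, $D\times D^c$, $D^c\times D$, $D^c\times D^c$, use that the last vanishes because $u\equiv 0$ a.e.\ on $D^c$, and invoke Lemma \ref{sym} to match the two mixed pieces. Your bookkeeping is a bit more explicit and you record the sharper constant $\sqrt2$ (which the paper's computation also yields, though it is stated as $2$); the only cosmetic slip is calling $A,B$ ``open'' when $D^c$ need not be, but the argument works verbatim for Borel sets.
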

\begin{proof}
	The corresponding $L^2$ norms are identical, because $supp(u) = D$, so we will only focus on the remaining parts of the norms.\\
	The first inequality is trivial. For the second inequality, we note that for $u\in\hnd$, we have $\pdc\pdc (u(x) - u(y))^2d\nu_y(x) dy = 0$, hence by Lemma \ref{sym}
	\begin{align*}&\porn\porn (u(x) - u(y))^2d\nu_y(x) dy  = (\pd\pd + \pd\pdc + \pdc\pd + \pdc\pdc) (u(x) - u(y))^2 d\nu_y(x) dy \\
	&= (\pd\pd + \pd\pdc + \pdc\pd)(u(x) - u(y))^2d\nu_y(x) dy\\
	&= (\pd\porn + \pdc\pd) (u(x) - u(y))^2 d\nu_y(x) dy \leq 2 \pd\porn (u(x) - u(y))^2 d\nu_y(x) dy.
	\end{align*}
\end{proof}
The proof of the following result is almost identical to the analogue in \cite{FKV}. Nonetheless, we present it for the convenience of the reader.
\begin{lem}
$H_\nu(\mR^n)$ and $\hnd$ are Hilbert spaces with the inner product $(u,v)_{\mR^n} + \langle u,v \rangle_\nu.$
\end{lem}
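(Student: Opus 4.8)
The plan is to verify the two defining properties of a Hilbert space: that $(u,v)_{\mR^n} + \langle u,v\rangle_\nu$ is a genuine inner product, and that the induced norm is complete. The inner product axioms are essentially formal: bilinearity and symmetry of $\langle\cdot,\cdot\rangle_\nu$ are visible from the formula \eqref{DF}, nonnegativity is clear since the integrand in $\langle u,u\rangle_\nu$ is $(u(x)-u(y))^2 \geq 0$, and definiteness follows because if $\|u\|_{L^2(\mR^n)}^2 + \langle u,u\rangle_\nu = 0$ then already $\|u\|_{L^2(\mR^n)} = 0$, so $u = 0$ a.e. The only mild subtlety is checking that $\langle u,v\rangle_\nu$ is finite for $u,v \in H_\nu(\mR^n)$, which follows from Cauchy--Schwarz applied to the measure $d\nu_y(x)\,dy$ on $\mR^n \times \mR^n$ together with the definition of the norm; this also shows the two displayed forms in \eqref{DF} agree, via the change of variables already used in Lemma \ref{sym}.

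The substantive part is completeness. First I would take a Cauchy sequence $(u_m)$ in $H_\nu(\mR^n)$. Since $\|u_m - u_k\|_{L^2(\mR^n)} \leq \|u_m - u_k\|_{H_\nu(\mR^n)}$, the sequence is Cauchy in $L^2(\mR^n)$, hence converges to some $u \in L^2(\mR^n)$; passing to a subsequence we may also assume $u_m \to u$ a.e. in $\mR^n$. Next, consider the functions $w_m(x,y) := u_m(x) - u_m(y)$ on the measure space $(\mR^n \times \mR^n, \tfrac12\,d\nu_y(x)\,dy)$. The quantity $\langle u_m - u_k, u_m - u_k\rangle_\nu$ is exactly the squared $L^2$ norm of $w_m - w_k$ on this space, so $(w_m)$ is Cauchy in that $L^2$ and converges to some $w$; passing to a further subsequence, $w_m(x,y) \to w(x,y)$ for $\nu_y(x)\,dy$-a.e.\ $(x,y)$. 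But $w_m(x,y) = u_m(x) - u_m(y) \to u(x) - u(y)$ for a.e.\ $(x,y)$ with respect to Lebesgue$\times$Lebesgue, and one must reconcile these two modes of convergence to conclude $w(x,y) = u(x) - u(y)$ for $\nu_y(x)\,dy$-a.e.\ $(x,y)$.

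The reconciliation is the one genuine obstacle, and it is exactly the kind of point the paper has been careful about (cf.\ Proposition \ref{glob} and the discussion before Definition \ref{hkomega}): because $\nu$ may be singular, ``a.e.\ with respect to Lebesgue measure in both variables'' does not immediately transfer to ``a.e.\ with respect to $d\nu_y(x)\,dy$''. I would handle this by using Fubini for the measure $d\nu(x)\,dy$ on the difference $u_m(x+y) - u_m(x) \to u(x+y) - u(x)$: for each fixed $x$ (outside a Lebesgue-null set) we have $u_m(x+y) \to u(x+y)$ for Lebesgue-a.e.\ $y$, hence, after translating, for every fixed jump $y$ the convergence $u_m(x+y) \to u(x+y)$ holds for Lebesgue-a.e.\ $x$; integrating against $d\nu(y)$ and using Fubini--Tonelli shows $w_m \to u(\cdot+y) - u(\cdot)$ in the $d\nu(y)\,dx$ sense along a.e.\ pair, which pins down the limit. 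Once $w(x,y) = u(x) - u(y)$ a.e.\ $[\nu_y\,dx]$, it follows that $\langle u,u\rangle_\nu = \|w\|^2 < \infty$, so $u \in H_\nu(\mR^n)$, and $\|u_m - u\|_{H_\nu(\mR^n)}^2 = \|u_m - u\|_{L^2(\mR^n)}^2 + \|w_m - w\|^2 \to 0$, giving convergence of the original Cauchy sequence (a Cauchy sequence with a convergent subsequence converges). Finally, for $\hnd$ I would note it is a closed subspace of $H_\nu(\mR^n)$: if $u_m \in \hnd$ and $u_m \to u$ in $H_\nu(\mR^n)$, then $u_m \to u$ in $L^2(\mR^n)$, so a subsequence converges a.e., and since each $u_m$ vanishes a.e.\ on $D^c$, so does $u$; hence $u \in \hnd$, and a closed subspace of a Hilbert space is itself a Hilbert space.
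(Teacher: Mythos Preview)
Your proof is correct and close in spirit to the paper's, but the completeness argument is organized differently. The paper does not pass to the auxiliary $L^2$ space on $(\mR^n\times\mR^n,\tfrac12\,d\nu_y(x)\,dy)$; instead it uses Fatou's lemma twice. First, with $u_{n_k}\to u$ a.e., Fatou gives
\[
\porn\porn (u(x)-u(y))^2\,d\nu_y(x)\,dy \le \liminf_k \porn\porn (u_{n_k}(x)-u_{n_k}(y))^2\,d\nu_y(x)\,dy \le \sup_m \|u_m\|_{H_\nu(\mR^n)}^2,
\]
so $u\in H_\nu(\mR^n)$. Second, Fatou applied to $(u_{n_k}-u_{n_l})$ with $l\to\infty$ yields $\langle u_{n_k}-u,u_{n_k}-u\rangle_\nu \le \liminf_l \langle u_{n_k}-u_{n_l},u_{n_k}-u_{n_l}\rangle_\nu < \varepsilon$ for $k$ large, giving convergence. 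The closed-subspace argument for $\hnd$ is identical to yours.

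It is worth noting that the ``reconciliation'' you carefully isolate---passing from a.e.\ Lebesgue convergence of $u_{n_k}$ to a.e.\ convergence of $(x,y)\mapsto u_{n_k}(x)-u_{n_k}(y)$ with respect to $d\nu_y(x)\,dy$---is in fact also needed for the paper's Fatou step, though the paper leaves it implicit (relying on the translation argument already used before Definition~\ref{hkomega}). Your route via $L^2$-completeness of the product space plus limit identification is equally valid and makes this point explicit; the Fatou route is a bit shorter because it never names the limit $w$ and needs no separate identification step.
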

\begin{proof}
It is enough to prove the proposition for $H_\nu(\mR^n)$, because once we establish that, it suffices to note that $\hnd = \{u\in H_\nu(\mR^n): u|_{D^c} \equiv 0\}$ is a closed subspace of $H_\nu(\mR^n)$.\\ $H_\nu(\mR^n)$ is obviously closed upon multiplication by scalars. The closedness under addition goes as follows. Let $u, v \in H_\nu(\mR^n)$. Since $(\cdot,\cdot)$ and $\langle \cdot,\cdot\rangle_\nu$ both admit the Cauchy-Schwarz inequality, we have $\|u+v\|_{H_\nu(\mR^n)} \leq \|u\|_{H_\nu(\mR^n)} + \|v\|_{H_\nu(\mR^n)}$, hence $u+v \in H_\nu(\mR^n)$.\\ We know that $(u,v)$ is an inner product, and $\langle u,v \rangle_\nu$ is bilinear, symmetric and nonnegative definite, therefore their sum is an inner product too. \\
To prove the completeness, let $(u_n)$ be a Cauchy sequence in $H_\nu(\mR^n)$. This implies that $(u_n)$ is a Cauchy sequence in $L^2(\mR^n)$, so it converges in $L^2(\mR^n)$ to some $u$. Let us choose a subsequence $(u_{n_k})$ that converges to $u$ a.e. From Fatou's lemma and the fact that $(u_n)$ is Cauchy, hence bounded in $H_\nu(\mR^n)$, we conclude that
\begin{align*}
\porn\porn (u(x)-u(y))^2 d\nu_y(x) dy &\leq \liminf\limits_{k\to\infty}\porn\porn (u_{n_k}(x) - u_{n_k}(y))^2 d\nu_y(x) dy\\ &\leq \sup\limits_{n\in\mN} \|u_n\|_{H_\nu(\mR^n)}^2 < \infty.
\end{align*}
Therefore $u\in H_\nu(\mR^n)$. Now we will prove that $u_{n_k} \mathop{\longrightarrow}\limits^{n\to\infty} u$ in $H_\nu(\mR^n)$. By Fatou's lemma:
\begin{align*}&\porn\porn (u_{n_k}(x) - u_{n_k}(y) - (u(x) - u(y)))^2 d\nu_y(x) dy\\ &\leq \liminf\limits_{l\to\infty}\porn\porn (u_{n_k}(x) - u_{n_k}(y) -(u_{n_l}(x) - u_{n_l}(y))^2 d\nu_y(x) dy.
\end{align*}
The right hand side is less than $\varepsilon$ for $k$ large enough since $(u_n)$ is Cauchy in $H_\nu(\mR^n)$. Thus, $u_{n_k}$ converges to $u$ in $H_\nu(\mR^n)$, and so $u_n \mathop{\longrightarrow}\limits^{n\to\infty} u$ in $H_\nu(\mR^n)$. That finishes the proof of completeness of $H_\nu(\mR^n)$.
\end{proof}
\begin{exa}
	If $d\nu(x) = C \frac 1{|x|^{n+\alpha}}dx$, for some $\alpha \in (0,2)$, i.e. if $L$ is the fractional Laplacian, then $\vkrn,\hkrn,\hrn$ are called the fractional Sobolev spaces. 
\end{exa}

\begin{rem}
	Our approach to the bilinear forms \eqref{DF} is straightforward in the sense that we do not use any deep results from the functional analysis. The book of Ma and R\"ockner \cite{MaRock} presents the relationship between the operator and its quadratic form in a more abstract sense, in the context of semigroups and resolvents theory.
\end{rem}
The definition of Sobolev spaces yields the following monotonicity properties.
\begin{lem}
	If $\nu_1 \leq \nu_2$, then $H_{\nu_2}^{\Omega}(\mR^n) \subseteq H_{\nu_1}^\Omega(\mR^n)$, $V_{\nu_2}^{\Omega}(\mR^n) \subseteq V_{\nu_1}^\Omega(\mR^n)$, and $H_{\nu_1}(\mR^n) \subseteq H_{\nu_2}(\mR^n)$.\\
	If $\Omega_1 \subseteq \Omega_2$, then $H_\nu^{\Omega_2}(\mR^n) \subseteq  H_\nu^{\Omega_1}(\mR^n)$, and $V_\nu^{\Omega_2}(\mR^n) \subseteq V_\nu^{\Omega_1}(\mR^n)$.
\end{lem}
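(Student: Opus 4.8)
The plan is to derive all five inclusions from one elementary observation. Each of these spaces has the form $\{u\in L^0(\mR^n):\|u\|<\infty\}$, possibly intersected with an exterior vanishing condition, and the relevant Gagliardo-type (semi)norm is monotone both in the L\'evy measure and in the reference set, simply because its integrand $(u(x)-u(y))^2$ is nonnegative. A pointwise domination $\|\cdot\|_X\le\|\cdot\|_Y$ of two such norms forces $Y\subseteq X$, while the exterior vanishing condition is carried along by intersecting both sides. All the Fubini--Tonelli steps are legitimate for singular $\nu$ for the reasons already given in Proposition \ref{glob} and in the computation preceding Definition \ref{hkomega}.

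First I would handle the dependence on $\nu$. Fix $u\in L^0(\mR^n)$ and assume $\nu_1\le\nu_2$, so that the translates satisfy $(\nu_1)_y\le(\nu_2)_y$ for every $y$. Integrating the nonnegative function $x\mapsto(u(x)-u(y))^2$ against these measures and then over $y\in\Omega$ gives
\[
\poom\porn(u(x)-u(y))^2\,d(\nu_1)_y(x)\,dy\;\leq\;\poom\porn(u(x)-u(y))^2\,d(\nu_2)_y(x)\,dy .
\]
Since the $L^2(\Omega)$-terms agree, $\|u\|_{V_{\nu_1}^{\Omega}(\mR^n)}\le\|u\|_{V_{\nu_2}^{\Omega}(\mR^n)}$, and therefore $V_{\nu_2}^{\Omega}(\mR^n)\subseteq V_{\nu_1}^{\Omega}(\mR^n)$. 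The condition ``$u\equiv 0$ a.e.\ in $\mR^n\setminus\Omega$'' does not refer to $\nu$, so intersecting both sides with the set of functions satisfying it yields $H_{\nu_2}^{\Omega}(\mR^n)\subseteq H_{\nu_1}^{\Omega}(\mR^n)$. Finally, repeating the displayed estimate with $\Omega$ replaced by $\mR^n$ compares the quadratic forms $\langle\cdot,\cdot\rangle_{\nu_1}$ and $\langle\cdot,\cdot\rangle_{\nu_2}$, and since the $L^2(\mR^n)$-part of $\|\cdot\|_{\hrn}$ is the same for both measures, this gives the remaining inclusion $H_{\nu_1}(\mR^n)\subseteq H_{\nu_2}(\mR^n)$.

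Next I would handle the dependence on the set. Fix $\nu$ and assume $\Omega_1\subseteq\Omega_2$. For every $u$, both $\|u\|_{L^2(\Omega_1)}^2$ and $\int_{\Omega_1}\porn(u(x)-u(y))^2\,d\nu_y(x)\,dy$ are dominated by the corresponding quantities with $\Omega_1$ replaced by $\Omega_2$, again by nonnegativity of the integrands. Hence $\|u\|_{V_\nu^{\Omega_1}(\mR^n)}\le\|u\|_{V_\nu^{\Omega_2}(\mR^n)}$, so $V_\nu^{\Omega_2}(\mR^n)\subseteq V_\nu^{\Omega_1}(\mR^n)$; combining this with the exterior vanishing conditions that cut the $H$-spaces out of the $V$-spaces yields $H_\nu^{\Omega_2}(\mR^n)\subseteq H_\nu^{\Omega_1}(\mR^n)$.

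The one step I expect to require genuine care is the bookkeeping around the exterior vanishing conditions; everything else is nonnegativity of an integrand plus a change of the order of integration, justified exactly as in Lemma \ref{sym}. For the $\nu$-monotonicity the exterior set $\mR^n\setminus\Omega$ is fixed, so the support constraint passes through the inclusion of the $V$-spaces verbatim; for the $\Omega$-monotonicity the exterior set itself varies with the domain, and one must identify which of the two vanishing conditions is the more restrictive and verify that the inclusion of the $V$-spaces is compatible with it before intersecting.
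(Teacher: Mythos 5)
Three of the five inclusions are handled correctly by your nonnegativity argument ($H_{\nu_2}^{\Omega}(\mR^n)\subseteq H_{\nu_1}^{\Omega}(\mR^n)$, $V_{\nu_2}^{\Omega}(\mR^n)\subseteq V_{\nu_1}^{\Omega}(\mR^n)$, and $V_\nu^{\Omega_2}(\mR^n)\subseteq V_\nu^{\Omega_1}(\mR^n)$); the paper itself offers no written proof, so that part stands on its own. The gap is in the remaining two inclusions, where your own estimates actually yield the \emph{opposite} inclusion to the one you assert. From $\nu_1\le\nu_2$ you get $\langle u,u\rangle_{\nu_1}\le\langle u,u\rangle_{\nu_2}$, so finiteness of the $\nu_2$-norm implies finiteness of the $\nu_1$-norm, i.e. $H_{\nu_2}(\mR^n)\subseteq H_{\nu_1}(\mR^n)$; concluding ``this gives $H_{\nu_1}(\mR^n)\subseteq H_{\nu_2}(\mR^n)$'' is a non sequitur. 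Indeed, the printed direction is false in general: for $\nu_1=\delta_{e_1}+\delta_{-e_1}$ the form satisfies $\langle u,u\rangle_{\nu_1}\le 4\|u\|_{L^2(\mR^n)}^2$, so $H_{\nu_1}(\mR^n)=L^2(\mR^n)$, while for $\nu_2=\nu_1+|y|^{-n-\alpha}dy$ the space $H_{\nu_2}(\mR^n)$ is a proper subspace of $L^2(\mR^n)$. The statement in the lemma is evidently a misprint, and the correct version is exactly what your estimate proves; you should have said so rather than asserting the printed direction.

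The same issue occurs for the domain monotonicity of the $H$-spaces, and here your closing caveat identifies the problem without resolving it. Intersecting $V_\nu^{\Omega_2}(\mR^n)\subseteq V_\nu^{\Omega_1}(\mR^n)$ with the exterior conditions cannot give $H_\nu^{\Omega_2}(\mR^n)\subseteq H_\nu^{\Omega_1}(\mR^n)$: a function in $H_\nu^{\Omega_2}(\mR^n)$ may be nonzero on $\Omega_2\setminus\Omega_1$ (take a smooth bump supported there), so it violates the more restrictive condition ``$u=0$ a.e.\ on $\mR^n\setminus\Omega_1$''. The inclusion that is true, and the one the paper actually uses later (in the stability proposition, $H_\nu^{\Omega'}(\mR^n)\subseteq H_\nu^{\Omega}(\mR^n)$ for $\Omega'\subseteq\Omega$), is $H_\nu^{\Omega_1}(\mR^n)\subseteq H_\nu^{\Omega_2}(\mR^n)$. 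Note that this direction is \emph{not} a pure nonnegativity statement: for $u\in H_\nu^{\Omega_1}(\mR^n)$ you must control the integral over $(\Omega_2\setminus\Omega_1)\times\mR^n$, which requires the Corollary following Lemma \ref{sym}, namely $\|u\|_{H_\nu(\mR^n)}\le 2\|u\|_{V_\nu^{\Omega_1}(\mR^n)}$ for functions vanishing outside $\Omega_1$. So the fix is: prove the two corrected inclusions (your estimates plus that Corollary suffice) and flag the printed directions as typos; as written, your proposal asserts two inclusions that do not follow from, and indeed are contradicted by, the estimates you derive.
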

In the following section, we show that if a nice function $u$ is a solution to the equation \eqref{DP}, then it satisfies the following weak version of the equation: $\langle u, \phi \rangle_\nu = (f,\phi)$, for every function $\phi\in \hkrn$, under the condition $u = g$ outside $\Omega$.

\section{Weak/variational solutions}\label{sec:weak}
We define the strong solutions to \eqref{DP} as the functions which satisfy its equations almost everywhere. However, our main target of consideration are the weak solutions.
\begin{defi}
Let $f\in L^2(\Omega)$. We say that $u \in V_\nu^{\Omega}(\mR^n)$ is a weak solution to \eqref{DP}, $u = g$  a.e. outside $\Omega$, and for every $\phi \in \hkrn$
\begin{equation}\label{WEAK}
\frac 12 \porn\porn (u(x) - u(y))(\phi(x) - \phi(y))d\nu_y(x) dy = \poom f(x)\phi(x) dx.
\end{equation}
In short, $u = g $ a.e. in $\Omega^c$ and 
\begin{equation}\label{short}
\langle u,\phi \rangle_\nu = (f,\phi).
\end{equation}
\end{defi}
The definition implies, that a neccessary condition for the existence of weak solution is that $g$ can be extended to a $\vkrn$ function. This also turns out to be a sufficient condition. In order to provide a more constructive assumption on $g$, one needs to consider the extension problem, which we discuss in Section \ref{sec:ext}. In there we also formulate a fully constructive set of assumptions under which the Dirichlet problem has a weak solution, see Corollary \ref{exicons}. Recall that $\Omega$ is bounded, and let us present the main result of this section.
\begin{thm}\label{exigen}
	Let $\nu$ be a symmetric L\'evy measure. If $f\in L^2(\Omega)$ and there exists $h\in\vkrn$, such that $g=h\restriction_{\Omega^c}$, then the equation of the form \eqref{WEAK} has a unique solution $u\in \vkrn$.
\end{thm}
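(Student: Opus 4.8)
The plan is to reduce the problem to the Lax–Milgram theorem on the Hilbert space $\hkrn$. First I would reformulate: writing $u = h + w$ with $w \in \hkrn$ (to be found), the condition $u = g$ a.e. in $\Omega^c$ is automatic for any such $w$, and the weak equation $\langle u, \phi\rangle_\nu = (f,\phi)$ for all $\phi\in\hkrn$ becomes
\begin{equation*}
\langle w, \phi\rangle_\nu = (f,\phi) - \langle h,\phi\rangle_\nu =: F(\phi), \qquad \phi\in\hkrn.
\end{equation*}
I would check that $F$ is a bounded linear functional on $\hkrn$: the term $(f,\phi)$ is controlled by $\|f\|_{L^2(\Omega)}\|\phi\|_{L^2(\Omega)} \le \|f\|_{L^2(\Omega)}\|\phi\|_{\hkrn}$, and $\langle h,\phi\rangle_\nu$ is controlled by the Cauchy–Schwarz inequality for $\langle\cdot,\cdot\rangle_\nu$ together with $\langle h,h\rangle_\nu^{1/2} \le \|h\|_{\vkrn} < \infty$ and $\langle\phi,\phi\rangle_\nu^{1/2}\le\|\phi\|_{\hkrn}$. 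Note here one uses that $\langle h,\phi\rangle_\nu$ is well defined even though $h$ need only lie in $\vkrn$, because $\phi$ vanishes outside $\Omega$, so the integrand is supported away from $\Omega^c\times\Omega^c$; this is exactly the kind of splitting justified by Lemma~\ref{sym} and the remark following it.

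Next I would verify that $\langle\cdot,\cdot\rangle_\nu$ is a bounded, symmetric, nonnegative bilinear form on $\hkrn$, which is immediate, and — this is the crucial point — that it is \emph{coercive} on $\hkrn$, i.e. there is $c>0$ with $\langle w,w\rangle_\nu \ge c\|w\|_{\hkrn}^2$ for all $w\in\hkrn$. Since $\|w\|_{\hkrn}^2 = \|w\|_{L^2(\mR^n)}^2 + \langle w,w\rangle_\nu$ and $w$ is supported in $\Omega$, coercivity is equivalent to the Poincaré inequality $\|w\|_{L^2(\Omega)}^2 \le C\langle w,w\rangle_\nu$ for $w\in\hkrn$ — precisely Theorem~\ref{poingen}, which the excerpt advertises as proved later via the reduction to discrete L\'evy measures (Lemmas~\ref{delty} and~\ref{poi}). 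Granting that, coercivity follows with $c = 1/(1+C)$. Then Lax–Milgram yields a unique $w\in\hkrn$ with $\langle w,\phi\rangle_\nu = F(\phi)$ for all $\phi$, and $u = h+w$ is the desired solution in $\vkrn$.

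For uniqueness I would argue directly: if $u_1, u_2\in\vkrn$ both solve the problem, then $w := u_1 - u_2$ vanishes a.e. in $\Omega^c$, so $w\in\hkrn$, and subtracting the two weak equations gives $\langle w,\phi\rangle_\nu = 0$ for all $\phi\in\hkrn$; taking $\phi = w$ gives $\langle w,w\rangle_\nu = 0$, and then the Poincaré inequality forces $\|w\|_{L^2(\Omega)} = 0$, so $w = 0$ a.e. in $\Omega$, hence a.e. in $\mR^n$, i.e. $u_1 = u_2$.

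The main obstacle is the coercivity/Poincaré step: boundedness of $F$, symmetry, and the algebraic manipulation $u = h+w$ are routine, but without some spectral-gap-type inequality the form $\langle\cdot,\cdot\rangle_\nu$ need not dominate the $L^2$ norm, and for a general (possibly singular, possibly lower-dimensional-looking) symmetric L\'evy measure this is genuinely delicate — one must exploit that mass escaping $\Omega$ under the jump kernel is detected by the form, which is the content of the atomic-measure Lemma~\ref{poi} and the representation Lemma~\ref{delty}. I would isolate that as Theorem~\ref{poingen} and treat the present theorem as its corollary via Lax–Milgram.
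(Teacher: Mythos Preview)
Your proposal is correct and follows essentially the same route as the paper: reduce to the homogeneous problem on $\hkrn$ via the substitution $u=h+w$, check that the right-hand side defines a bounded functional, and apply Lax--Milgram using the Poincar\'e inequality of Theorem~\ref{poingen} for coercivity. The only cosmetic differences are that the paper treats the case $g=0$ first before the substitution, and proves uniqueness by showing independence of the chosen extension of $g$ rather than by your direct subtraction argument; both are equivalent.
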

To prove this theorem, we note that the quadratic form $\langle u,u \rangle_\nu$ can be represented in terms of forms $\langle u,u\rangle_{\delta_y}$, where $\delta_y$ is the Dirac delta at point $y$. Then we establish the Poincar\'e inequality for atomic measures, and use the aforementioned representation of $\nu$, to prove that the Poincar\'e inequality holds for every symmetric L\'evy measure in Theorem \ref{poingen}. After doing that, we use the Lax-Milgram theorem to finish the proof for the homogeneous case ($g=0$), from which we pass to the non-homogeneous case.
\begin{thm}[Lax-Milgram theorem, \cite{Lax}, \S 6 Th. 6]
	Let $\mathcal{H}$ be a Hilbert space over $\mathbb{K} = \mathbb{C}$ or $\mR$, and let $a: \mathcal{H} \times \mathcal{H} \longmapsto \mathbb{K}$ be a bilinear functional that satisfies
	\begin{enumerate}
		\item $(\exists C > 0) (\forall x,y \in \mathcal{H}) \ \  |a(x,y)| \leq C\|x\|\cdot \|y\|$,
		\item $(\exists \beta >0)(\forall x \in \mathcal{H}) \ \ |a(x,x)| \geq \beta \|x\|^2$ (coercivity).
	\end{enumerate}
	Then for every $l \in \mathcal{H}^*$ the equation
	$$a(u,v) = l(v) \ \ \ \ \ \hbox{for every } v\in \mathcal{H},$$
	has a unique solution $u$.
\end{thm}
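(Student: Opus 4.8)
The statement is the classical Lax--Milgram theorem; the plan is the standard one: use the Riesz representation theorem to convert the identity $a(u,v)=l(v)$ into an operator equation $Au=w$ on $\mathcal{H}$, and then prove that $A$ is invertible. I treat the real case, which is the one used in the sequel; the case $\mathbb{K}=\mathbb{C}$ is entirely analogous. First, for a fixed $u\in\mathcal{H}$ the map $v\mapsto a(u,v)$ is linear by bilinearity and bounded with norm at most $C\|u\|$ by hypothesis (1); hence the Riesz representation theorem provides a unique $Au\in\mathcal{H}$ with $a(u,v)=(Au,v)_{\mathcal{H}}$ for every $v\in\mathcal{H}$. Using bilinearity again, $u\mapsto Au$ is linear, and $\|Au\|=\sup_{\|v\|\leq1}|a(u,v)|\leq C\|u\|$, so $A$ is a bounded linear operator on $\mathcal{H}$. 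Applying the Riesz theorem to $l\in\mathcal{H}^*$ gives $w\in\mathcal{H}$ with $l(v)=(w,v)_{\mathcal{H}}$ for all $v$; since $(Au,v)_{\mathcal{H}}=(w,v)_{\mathcal{H}}$ for all $v$ is equivalent to $Au=w$, the problem becomes to solve $Au=w$.

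Next I would extract everything from coercivity. Hypothesis (2) gives $\beta\|u\|^2\leq|a(u,u)|=|(Au,u)_{\mathcal{H}}|\leq\|Au\|\,\|u\|$, hence $\|Au\|\geq\beta\|u\|$ for all $u\in\mathcal{H}$. From this lower bound I read off injectivity of $A$; closedness of $\mathrm{Ran}(A)$ (a Cauchy sequence $Au_n$ forces $(u_n)$ Cauchy, so $u_n\to u$ and $Au_n\to Au$ by continuity of $A$); and the estimate $\|u\|\leq\beta^{-1}\|w\|=\beta^{-1}\|l\|_{\mathcal{H}^*}$, which will also be the stability bound for the solution. It remains to show $A$ is surjective: $\mathrm{Ran}(A)$ being closed, if it were a proper subspace there would exist $z\neq0$ with $z\perp\mathrm{Ran}(A)$, so $(Az,z)_{\mathcal{H}}=0$, contradicting $|(Az,z)_{\mathcal{H}}|=|a(z,z)|\geq\beta\|z\|^2>0$. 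Therefore $A$ is bijective, $u:=A^{-1}w$ solves $a(u,\cdot)=l$, and uniqueness is immediate: if $a(u_1,v)=a(u_2,v)$ for every $v$, take $v=u_1-u_2$ to get $|a(u_1-u_2,u_1-u_2)|=0$, hence $u_1=u_2$ by (2).

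I expect the surjectivity of $A$ to be the only step calling for an actual idea---combining the closed range coming from $\|Au\|\geq\beta\|u\|$ with the orthogonal decomposition of $\mathcal{H}$; everything else is routine manipulation with Riesz representation and bilinearity. If one wishes to bypass the orthogonal-decomposition argument, there is a contraction-mapping alternative: since $x\mapsto a(x,x)$ is continuous and, by (2), nowhere vanishing on the unit sphere, it has constant sign, so after possibly multiplying $a(u,v)=l(v)$ through by $-1$ we may assume $a(x,x)\geq\beta\|x\|^2$; then for all sufficiently small $\rho>0$ the map $v\mapsto v-\rho(Av-w)$ is a contraction on $\mathcal{H}$ (its squared Lipschitz constant being $1-2\rho\beta+\rho^2C^2<1$), and the Banach fixed point theorem delivers the unique $u$ with $Au=w$.
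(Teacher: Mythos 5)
Your proof is correct, but there is nothing in the paper to compare it with: the paper states the Lax--Milgram theorem as a quoted classical result, with a citation to Lax's book, and gives no proof of it (it is used as a black box in the proof of Theorem \ref{exigen}). Your argument is the standard one --- Riesz representation to get $a(u,v)=(Au,v)_{\mathcal{H}}$ and $l(v)=(w,v)_{\mathcal{H}}$, the lower bound $\|Au\|\geq\beta\|u\|$ from coercivity, injectivity and closed range from that bound, and surjectivity from the orthogonal complement --- and it correctly handles the paper's formulation, in which the form is not assumed symmetric and coercivity is stated with an absolute value, $|a(x,x)|\geq\beta\|x\|^2$; your contraction-mapping alternative also deals with this correctly by observing that $x\mapsto a(x,x)$ has constant sign. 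Two minor remarks: the claim that the complex case is ``entirely analogous'' glosses over the fact that for a genuinely bilinear (rather than sesquilinear) form on a complex space the Riesz step needs a conjugation, though this is irrelevant for the paper, which only uses the real case with $\mathcal{H}=\hkrn$; and in the constant-sign argument one should note that connectedness of the unit sphere (or bilinearity in dimension one) is what makes the sign constant --- you implicitly use this and it is fine.
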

\begin{lem}\label{delty}
	For every L\'evy measure $\nu$ and $u\in \hkrn$ we have
	\begin{equation}
	\langle u,u \rangle_\nu = \porn \langle u,u \rangle_{\delta_{y}} d\nu(y).
	\end{equation}
\end{lem}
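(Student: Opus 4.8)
The plan is to unpack the definition of $\langle u,u\rangle_{\delta_y}$ and show that integrating it against $d\nu(y)$ reproduces the double integral defining $\langle u,u\rangle_\nu$. Recall from \eqref{DF} that for a general measure the form is $\langle u,u\rangle_\mu = \frac12\porn\porn (u(y)-u(x+y))^2 d\mu(x)\,dy$; specializing $\mu=\delta_z$ (the Dirac mass at $z$) collapses the inner integral and gives $\langle u,u\rangle_{\delta_z} = \frac12\porn (u(y)-u(y+z))^2\,dy$, which is finite for $u\in\hkrn$ and each fixed $z$ by \eqref{finit}-type reasoning together with $u\in L^2$. Then
\begin{align*}
\porn \langle u,u\rangle_{\delta_z}\,d\nu(z) &= \porn \frac12 \porn (u(y)-u(y+z))^2\,dy\,d\nu(z).
\end{align*}
The first step is therefore just to write down this identity carefully, using the second (translation) form of \eqref{DF} with $\nu$ replaced by $\delta_z$.

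The second step is to swap the order of integration. The integrand $(y,z)\mapsto (u(y)-u(y+z))^2$ is nonnegative and measurable on $\mR^n\times\mR^n$, so by the Fubini--Tonelli theorem
\begin{align*}
\porn \frac12 \porn (u(y)-u(y+z))^2\,dy\,d\nu(z) &= \frac12 \porn \porn (u(y)-u(y+z))^2\,d\nu(z)\,dy,
\end{align*}
and the right-hand side is exactly the second expression for $\langle u,u\rangle_\nu$ in \eqref{DF} (with the roles of the variable names $x,z$ interchanged). This completes the chain of equalities. One should note that both sides may a priori be $+\infty$, but since $u\in\hkrn\subseteq H_\nu(\mR^n)$ the right-hand side is finite, so all quantities appearing are finite and the manipulations are legitimate; alternatively one simply runs the Tonelli argument for nonnegative integrands, for which finiteness is not even needed to justify the interchange.

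The only genuine subtlety — and the step I would be most careful about — is the measurability of $(y,z)\mapsto u(y+z)$ on $\mR^n\times\mR^n$ and hence of the integrand, since $u$ is only a Borel (equivalence class of) measurable function; this is the same a.e.-representative issue flagged before Definition \ref{hkomega}, and it is handled exactly as there: the map $(y,z)\mapsto y+z$ is continuous, hence Borel, so $u(y+z)$ is measurable, and the value of the double integral does not depend on the representative of $u$ by the Fubini--Tonelli computation already recorded in the text preceding Definition \ref{hkomega}. Everything else is a direct substitution into \eqref{DF}, so no further obstacle arises.
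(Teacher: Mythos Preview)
Your proof is correct and follows essentially the same approach as the paper: compute $\langle u,u\rangle_{\delta_z}=\tfrac12\porn(u(y)-u(y+z))^2\,dy$ explicitly, then apply Fubini--Tonelli to the nonnegative integrand to identify $\porn\langle u,u\rangle_{\delta_z}\,d\nu(z)$ with the translation form of $\langle u,u\rangle_\nu$ in \eqref{DF}. The only difference is that you discuss the measurability/representative issue more explicitly than the paper does.
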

\begin{proof}
	Let $u\in\hkrn$. By Tonelli's theorem, we have
	\begin{equation*}
	\langle u,u \rangle_\nu = \frac 12\porn\porn (u(x) - u(x+y))^2 d\nu(y)dx = \frac 12\porn\porn (u(x)- u(x+y))^2 dxd\nu(y).
	\end{equation*}
	Again, by Tonelli's theorem, we can iterate the integration to get
	\begin{equation}
	\langle u,u \rangle_{\delta_y} = \frac 12\porn\porn (u(x) - u(x+z))^2 d\delta_y(z) dx = \frac 12\porn (u(x) - u(x+y))^2 dx,
	\end{equation}
	which ends the proof.
\end{proof}
A similar formula holds with $u,v\in\hkrn$, and $\langle u,v\rangle$ instead of $\langle u,u\rangle$. 
Let us note the following fact which is a consequence of the formula $2a^2 + 2b^2 \geq (a+b)^2$.
\begin{lem}\label{atom}
	Let $B$ be a Borel set in $\mR^n$, $x_0 \in \mR^n\backslash\{0\}$. For every $u$ for which the right hand side makes sense,
	\begin{equation*}
	\int\limits_{B} (u(x) - u(x+x_0))^2 dx \geq \frac 12\int\limits_{B} u(x)^2 dx - \int\limits_{B+x_0} u(x)^2 dx.
	\end{equation*}
\end{lem}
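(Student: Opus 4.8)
The plan is to reduce everything to the pointwise elementary inequality $2a^2+2b^2\ge (a+b)^2$ mentioned before the statement, integrate, and then use translation invariance of Lebesgue measure on one of the terms. There is essentially no hard part; the only thing to be careful about is that the rearrangement of a possibly infinite inequality is legitimate, which is exactly what the hypothesis ``for which the right-hand side makes sense'' is there to guarantee.

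Concretely, first I would fix $x\in\mR^n$ and apply $2a^2+2b^2\ge (a+b)^2$ with $a=u(x)-u(x+x_0)$ and $b=u(x+x_0)$, so that $a+b=u(x)$. This gives
$$2\bigl(u(x)-u(x+x_0)\bigr)^2+2\,u(x+x_0)^2\ge u(x)^2 \qquad \text{for every } x\in\mR^n.$$
Next I would integrate this inequality over $B$ (all integrands are nonnegative, so the integrals are well defined in $[0,\infty]$), obtaining
$$2\int\limits_B \bigl(u(x)-u(x+x_0)\bigr)^2\,dx + 2\int\limits_B u(x+x_0)^2\,dx \ge \int\limits_B u(x)^2\,dx.$$

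Finally, by the translation invariance of Lebesgue measure, $\int_B u(x+x_0)^2\,dx=\int_{B+x_0} u(x)^2\,dx$. Substituting this, moving the term to the right-hand side, and dividing by $2$ yields
$$\int\limits_B \bigl(u(x)-u(x+x_0)\bigr)^2\,dx \ge \frac 12 \int\limits_B u(x)^2\,dx - \int\limits_{B+x_0} u(x)^2\,dx,$$
which is the claimed inequality. The subtraction and division step is valid precisely because the right-hand side is assumed to make sense, i.e. $\int_{B+x_0}u^2<\infty$; when the left-hand side is infinite the inequality is trivial, so no separate discussion of that case is needed. I do not anticipate any genuine obstacle in this argument.
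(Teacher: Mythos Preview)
Your proof is correct and follows exactly the approach the paper indicates: the lemma is stated there simply as ``a consequence of the formula $2a^2 + 2b^2 \geq (a+b)^2$'', and your choice $a=u(x)-u(x+x_0)$, $b=u(x+x_0)$, integration over $B$, and translation invariance are precisely the intended details.
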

\begin{lem}[Poincar\'e inequality for measures with atoms]\label{poi}
	Let $\nu$ be a L\'evy measure with an atom in $x_0\in \mR^n\backslash\{0\}$. Then the quadratic form $\langle \cdot, \cdot \rangle_\nu$ satisfies the Poincar\'e inequality
	\begin{equation}
	\label{poin}
	C\langle u,u \rangle_\nu \geq \|u\|^2_{L^2(\mR^n)}, \hfill \hbox{ for every } u\in\hkrn,
	\end{equation}
	with $C$ independent of $u$. Furthermore, if we fix $\Omega$ and $\eps > 0$, then for $|x_0|> \eps$ the constant is uniformly bounded. 
\end{lem}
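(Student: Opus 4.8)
The plan is to throw away everything in $\nu$ except its two atoms at $\pm x_0$, thereby reducing the claim to a Poincar\'e-type inequality for the single difference operator $u\mapsto u(\cdot)-u(\cdot+x_0)$, and then to prove that inequality by a telescoping argument that uses only the boundedness of $\Omega$.

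First I would set $c:=\nu(\{x_0\})>0$; by symmetry $\nu(\{-x_0\})=c$ as well, and $x_0\neq-x_0$ since $x_0\neq0$. Lemma~\ref{delty} gives $\langle u,u\rangle_\nu=\porn\langle u,u\rangle_{\delta_y}\,d\nu(y)$, and since every integrand $\langle u,u\rangle_{\delta_y}=\tfrac12\porn(u(x)-u(x+y))^2\,dx$ is nonnegative, I may restrict the $d\nu$-integral to $\{x_0,-x_0\}$ to get $\langle u,u\rangle_\nu\ge c\langle u,u\rangle_{\delta_{x_0}}+c\langle u,u\rangle_{\delta_{-x_0}}$. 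Translation invariance of Lebesgue measure (the substitution $x\mapsto x+x_0$) shows $\langle u,u\rangle_{\delta_{-x_0}}=\langle u,u\rangle_{\delta_{x_0}}$, hence
\[
\langle u,u\rangle_\nu\ \ge\ c\porn\bigl(u(x)-u(x+x_0)\bigr)^2\,dx .
\]
It then suffices to bound $\|u\|_{L^2(\mR^n)}^2$ by a multiple of the right-hand integral.

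For that step I would use that $u\in\hkrn$ vanishes a.e.\ outside the bounded set $\Omega$. Pick $R$ with $\Omega\subseteq B(0,R)$ and let $N$ be the least positive integer with $N|x_0|>2R$; then for $x\in\Omega$ the triangle inequality gives $|x+Nx_0|\ge N|x_0|-|x|>R$, so $\Omega+Nx_0\subseteq\Omega^c$ and therefore $u(x+Nx_0)=0$ for a.e.\ $x\in\Omega$. Telescoping,
\[
u(x)\ =\ u(x)-u(x+Nx_0)\ =\ \sum_{k=0}^{N-1}\bigl(u(x+kx_0)-u(x+(k+1)x_0)\bigr)\qquad\text{a.e.\ on }\Omega,
\]
so Cauchy--Schwarz yields $u(x)^2\le N\sum_{k=0}^{N-1}\bigl(u(x+kx_0)-u(x+(k+1)x_0)\bigr)^2$. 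Integrating over $\mR^n$ (the same as over $\Omega$) and substituting $x\mapsto x-kx_0$ in the $k$-th term bounds each term by $\porn(u(x)-u(x+x_0))^2\,dx$; summing the $N$ terms gives $\|u\|_{L^2(\mR^n)}^2\le N^2\porn(u(x)-u(x+x_0))^2\,dx\le\tfrac{N^2}{c}\langle u,u\rangle_\nu$. Thus \eqref{poin} holds with $C=N^2/\nu(\{x_0\})$. For the last assertion, if $|x_0|>\eps$ then $N\le 2R/\eps+1=:N_0$ depends only on $\Omega$ and $\eps$, so $C$ depends on $\nu$ only through the atom's mass $\nu(\{x_0\})$ and satisfies $C\,\nu(\{x_0\})=N^2\le N_0^2$, uniformly in the position of the atom.

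I do not expect a genuine obstacle here: the essential insight is that even a single atom of $\nu$ lets one jump out of $\Omega$ in boundedly many steps, and everything else is elementary. The only points needing a little care are the almost-everywhere bookkeeping in the telescoping identity (legitimate because we work with equivalence classes and Lebesgue measure is translation invariant, as discussed before Definition~\ref{hkomega}), the harmless restriction of the $d\nu$-integral to the two-point set $\{x_0,-x_0\}$, and stating the dependence of $C$ on $\Omega$, $\eps$ and the atom's mass precisely enough to support the uniformity claim.
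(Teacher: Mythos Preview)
Your proof is correct and takes a cleaner route than the paper's. Both arguments reduce to the single Dirac measure $\delta_{x_0}$ and exploit that a bounded number of translates by $x_0$ pushes $\Omega$ outside itself, but the mechanisms differ. The paper iteratively applies Lemma~\ref{atom} (the elementary inequality $\int_B(u(x)-u(x+x_0))^2\,dx\ge\tfrac12\int_B u^2-\int_{B+x_0}u^2$) to the shrinking sets $\Omega\cap(\Omega+x_0)\cap\cdots\cap(\Omega+kx_0)$, accumulating constants $C_k,c_k$ at each step until the intersection becomes empty. Your telescoping identity $u(x)=\sum_{k=0}^{N-1}\bigl(u(x+kx_0)-u(x+(k+1)x_0)\bigr)$ followed by Cauchy--Schwarz accomplishes the same in one stroke, bypasses Lemma~\ref{atom} entirely, and produces the explicit constant $C=N^2/\nu(\{x_0\})$ with $N=\lceil 2R/|x_0|\rceil$. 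This makes the dependence on $\Omega$, $|x_0|$, and the atom's mass fully transparent, whereas the paper's iteration leaves the constants implicit; conversely, the paper's approach shows a little more structure by tracking exactly which portion of $\|u\|_{L^2}^2$ is controlled after each step.
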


\begin{proof}
	It suffices to consider measures of the form $\nu(A) = \delta_{x_0}(A)$ for an arbitrary $x_0\in\mR^n$. Let us write the quadratic form for such a measure
	\begin{align}
	2\langle u,u\rangle_\nu &= \label{yield} \porn\porn (u(x) - u(x+y))^2 d\nu(y) dx =  \porn (u(x) - u(x+x_0))^2 dx  \\
	&\nonumber= \int\limits_{\Omega^c - x_0} u(x)^2 dx + \int\limits_{\Omega - x_0} (u(x) - u(x+x_0))^2 dx\\
	&=\label{delta} \int\limits_{(\Omega^c - x_0)\cap\Omega} u(x)^2 dx + \int\limits_{\Omega - x_0}(u(x) - u(x+x_0))^2 dx.
	\end{align}
	By \eqref{yield} and \eqref{delta} we see that it is enough to show that $\widetilde{C} \langle u,u\rangle_\nu \geq \int\limits_{(\Omega-x_0)\cap \Omega} u(x)^2 dx$ with $\widetilde{C}$ independent of $u$.\\
	Using Lemma \ref{atom}, we get
	\begin{equation}\label{1krok}
	2\langle u,u\rangle_\nu \geq \int\limits_{(\Omega-x_0)\cap \Omega} (u(x) - u(x+x_0))^2 dx \geq \frac 12 \int\limits_{(\Omega-x_0) \cap \Omega} u(x)^2 dx - \int\limits_{\Omega\cap (\Omega + x_0)}u(x)^2 dx.
	\end{equation}
		Again, by Lemma \ref{atom} and the fact that $u$ is supported in $\Omega$: 
		\begin{equation}\label{krok2}
		4\langle u,u\rangle_\nu \geq 2\int\limits_{\Omega\cap(\Omega+x_0)} (u(x) - u(x+x_0))^2 dx \geq \int\limits_{\Omega\cap (\Omega + x_0)}u(x)^2 dx - 2\int\limits_{\Omega\cap(\Omega + x_0)\cap(\Omega + 2x_0)} u(x)^2 dx.
	\end{equation}
	Adding \eqref{1krok} and \eqref{krok2} side by side yields
	\begin{equation}\label{comp2}
	6\langle u, u \rangle_\nu \geq \frac 12 \int\limits_{\Omega\cap(\Omega - x_0)} u(x)^2dx - 2\int\limits_{\Omega\cap(\Omega + x_0)\cap(\Omega + 2x_0)} u(x)^2 dx.
	\end{equation}
	In the next step we use Lemma \ref{atom} with $8\langle u,u\rangle_\nu\geq 4\int\limits_{\Omega\cap(\Omega + x_0)\cap(\Omega + 2x_0)}(u(x) - u(x+x_0)^2 dx$.\\
	At every step we obtain an inequality of the form:
	\begin{equation}\label{kty}
	C_k \langle u,u\rangle_\nu \geq \frac 12 \int\limits_{\Omega\cap(\Omega - x_0)}u(x)^2 dx -c_k \int\limits_{\Omega\cap (\Omega + x_0) \cap \ldots\cap (\Omega + kx_0)} u(x)^2 dx.
	\end{equation} However, since $\Omega$ is bounded, for some $n\in\mathbb{N}$, we will get $\Omega\cap(\Omega+x_0)\cap(\Omega+2x_0)\cap\ldots\cap(\Omega + nx_0) = \emptyset$. Then the subtracted integral in \eqref{kty} is equal to 0, and we get the desired result.\\
	The uniform boundedness of $C$ follows directly from the proof: notice how the ratio of $diam(\Omega)$ to $|x_0|$ affects the required number of steps in our reasoning.
\end{proof}
\begin{thm}[Poincar\'e inequality for symmetric L\'evy measures]\label{poingen}
Let $\Omega$ be a nonempty bounded open set, and let $\nu$ be a symmetric L\'evy measure. Then,
\begin{equation}\label{coer}
		\|u\|_{L^2(\Omega)}^2 \leq C(\nu,\Omega) \langle u,u \rangle_\nu  \hbox{\ \ \ \ \ for every } u\in \hkrn.
	\end{equation}
\end{thm}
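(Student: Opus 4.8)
The plan is to deduce the general inequality from the atomic case (Lemma \ref{poi}) by integrating against $\nu$, using the disintegration of the quadratic form provided by Lemma \ref{delty}. Throughout we may assume $\nu\not\equiv 0$, since the statement presupposes a nontrivial L\'evy measure (for $\nu\equiv 0$ the space $\hkrn$ coincides with $L^2(\Omega)$ and no such estimate can hold). Because $\nu(\{0\})=0$, the mass $\nu(B(0,\eps)^c)$ increases to $\nu(\mR^n)>0$ as $\eps\to 0^+$, so we may fix once and for all an $\eps_0>0$ with $m:=\nu(B(0,\eps_0)^c)>0$. The idea is then simply to throw away the (nonnegative) contribution of the small jumps and work with the finite mass sitting outside $B(0,\eps_0)$.

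Concretely, for $u\in\hkrn$ Lemma \ref{delty} gives $\langle u,u\rangle_\nu=\porn \langle u,u\rangle_{\delta_y}\,d\nu(y)$, and since each integrand is nonnegative,
$$\langle u,u\rangle_\nu \ \ge\ \int\limits_{B(0,\eps_0)^c}\langle u,u\rangle_{\delta_y}\,d\nu(y).$$
Now I would invoke the estimate established inside the proof of Lemma \ref{poi}: for every fixed $y\neq 0$ one has $\langle u,u\rangle_{\delta_y}\ge c(y,\Omega)\,\|u\|_{L^2(\Omega)}^2$, and — this is precisely the uniformity clause of that lemma — the constant admits a lower bound $c(y,\Omega)\ge c(\eps_0,\Omega)>0$ valid for all $y$ with $|y|>\eps_0$. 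Substituting this into the last display,
$$\langle u,u\rangle_\nu \ \ge\ c(\eps_0,\Omega)\,\|u\|_{L^2(\Omega)}^2\int\limits_{B(0,\eps_0)^c} d\nu(y)\ =\ c(\eps_0,\Omega)\,m\,\|u\|_{L^2(\Omega)}^2,$$
which is exactly \eqref{coer} with $C(\nu,\Omega)=\bigl(c(\eps_0,\Omega)\,m\bigr)^{-1}$.

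The one genuine point to be careful about — and the reason Lemma \ref{poi} was stated with the uniform boundedness remark — is the uniform lower bound on the atomic Poincar\'e constant over the whole range $|y|>\eps_0$: one must check that the number of iteration steps in the proof of Lemma \ref{poi}, and hence $c(y,\Omega)$, does not degenerate as $y$ varies. This is controlled by $\mathrm{diam}(\Omega)/|y|$, which is largest (namely $\le \mathrm{diam}(\Omega)/\eps_0$) near the inner radius $\eps_0$ and is harmless for large $|y|$, where $\Omega+y$ already misses $\Omega$ and the estimate is trivial. The remaining ingredients are routine: $y\mapsto \langle u,u\rangle_{\delta_y}=\tfrac12\porn (u(x)-u(x+y))^2\,dx$ is measurable by Tonelli and finite for $u\in\hkrn$ (bounded by $2\|u\|_{L^2(\Omega)}^2$), both facts already implicit in Lemma \ref{delty}; and no truncation or weak-convergence approximation of $\nu$ is needed, since discarding the small-jump part only decreases $\langle u,u\rangle_\nu$.
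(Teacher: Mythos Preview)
Your proof is correct and follows essentially the same approach as the paper: disintegrate the form via Lemma \ref{delty}, restrict the $y$-integration away from the origin, and invoke the uniform atomic Poincar\'e constant from Lemma \ref{poi}. The only cosmetic difference is that the paper integrates over a compact annulus $\{\eps_1\le |y|\le \eps_2\}$ of positive $\nu$-mass rather than the full exterior $B(0,\eps_0)^c$; since the uniformity clause of Lemma \ref{poi} already covers all $|y|>\eps$, your version is equally valid and arguably a touch more direct.
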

\begin{proof}
	Let $a > b > 0$, and let $R_a^b = \{x\in \mR^n: a\leq |x| \leq b\}$. Note that for every L\'evy measure $\nu$ there exist $\eps_2 > \eps_1 > 0$ such that $\nu(R_{\eps_1}^{\eps_2}) > 0$. By Lemma \ref{poi}, there exists $C>0$, such that for every $y\in R_{\eps_1}^{\eps_2}$ and $u\in \hkrn$
	\begin{equation*}
		\langle u,u \rangle_{\delta_{y}} \geq  C^{-1}\|u\|_{L^2(\Omega)}^2.
	\end{equation*}
	Hence,
	\begin{align}
		\langle u,u \rangle_\nu &= \frac 12 \porn \porn (u(x) - u(x+y))^2 dx d\nu(y) \geq \int\limits_{R_{\eps_1}^{\eps_2}} \frac 12\porn (u(x) - u(x+y))^2 dx d\nu(y) \nonumber\\
		&= \int\limits_{R_{\eps_1}^{\eps_2}} \langle u,u \rangle_{\delta_y} d\nu(y) \geq C^{-1} \nu(R_{\eps_1}^{\eps_2})\|u\|_{L^2(\Omega)}^2.\label{Poincare}
	\end{align}
\end{proof}
\begin{proof}[Proof of Theorem \ref{exigen}]
	First we take care of the homogeneous equation, i.e. $g=0$ a.e. outside $\Omega$.
	Let us use the Lax-Milgram theorem with $\mathcal{H} = \hkrn$ with the norm $\|\cdot\|_{\vkrn}$, $a(u,v) = \langle u, v \rangle_\nu$, and $l(v) = (f,v)$. For $u,v\in \hkrn$ we have
	$$|\langle u,v \rangle_\nu|^2 \leq \langle u,u \rangle_\nu \langle v,v \rangle_\nu \leq (\|u\|_2^2 + \langle u,u \rangle_\nu)(\|v\|_2^2 + \langle v,v \rangle_\nu) \leq 2\|u\|_{V_\nu^{\Omega}(\mR^n)}^2 2\|v\|_{V_\nu^{\Omega}(\mR^n)}^2,$$
	hence $a$ is bounded.\\ In our setting the coercivity is equivalent to
	$$\langle u,u \rangle_\nu \geq \beta (\|u\|_{L^2(\Omega)}^2 + \langle u,u \rangle_\nu) \hbox{\ \ \ \ \ for every } u\in \hkrn.$$
	As we see, $\beta$ must be a number from the interval $(0,1)$. Thus, the coercivity is granted by Theorem \ref{poingen}.
	
	For every $\phi\in\hkrn$,
	$$|(f,\phi)| \leq \|f\|_{L^2(\Omega)} \|\phi\|_{L^2(\Omega)} \leq \|f\|_{L^2(\Omega)} \|\phi\|_{V_\nu^{\Omega}(\mR^n)},$$
	hence $l\in\mathcal{H}^*$. \\
	By the Lax-Milgram theorem we conclude that the equation
	$$\langle u, \phi \rangle_\nu = (f,\phi) \ \ \ \ \ \hbox{for every } \phi\in\hkrn,$$
	has a unique solution $u \in \hkrn$.\\
	The case of $g\not\equiv 0$ can now be resolved quite easily.\\
	Consider an arbitrary (fixed) extension of $g$ to a function in $\vkrn$ (which we also call $g$). Note that the conditions ``$u\in\vkrn$, $u=g$ a.e. in $\Omega^c$'' are equivalent to ``$u = \wtu + g$ for some $\wtu \in\hkrn$''. Let $u = \wtu + g$ be such a function. Then
	\begin{align*} &\porn\porn (u(x) - u(y))(\phi(x) - \phi(y))d\nu_y(x) dy
		\\ &= \porn\porn (\wtu(x) + g(x) - \wtu(y) - g(y))(\phi(x) - \phi(y))d\nu_y(x) dy
		\\ &= \porn\porn (\wtu(x) - \wtu(y))(\phi(x) - \phi(y)) d\nu_y(x) dy 
		\\ &\ \ \ \ \ +\porn\porn (g(x) - g(y))(\phi(x) - \phi(y)) d\nu_y(x) dy.\end{align*}
	Since $\wtu\in\hkrn$, the existence of the solution of the equation \eqref{WEAK} is equivalent to the existence of the solution $\wtu$ of the homogeneous equation
	\begin{equation}\label{nh}
		\langle \wtu, \phi \rangle_\nu = (f,\phi) - \langle g, \phi \rangle_\nu \ \ \ \ \ \hbox{for every }\phi\in\hkrn.
	\end{equation}
	By the Cauchy-Schwarz inequality and the fact that $g\in\vkrn$, we have
	$$\langle g, \phi \rangle_\nu^2 \leq \langle g,g\rangle_\nu \langle \phi,\phi\rangle_\nu \leq 2\langle g,g\rangle_\nu \|\phi\|_{\vkrn}^2 \ \ \ \ \hbox{for every } \phi \in \hkrn,$$
	hence $l(\cdot) = (f,\cdot) - \langle g,\cdot \rangle_\nu$ is a continuous linear functional on $\hkrn$. Thus we conclude that the equation \eqref{nh} has a unique solution $\wtu$. Therefore $u = \wtu + g$ solves \eqref{WEAK}. We claim that $u$ does not depend on the choice of the extension of $g$. Let $g_1, g_2 \in \vkrn$ be extensions of $g$, and let $\wtu, \overline{u}$ be solutions of \eqref{nh} with $g=g_1$, $g=g_2$, respectively. For every $\phi \in \hkrn$ we have
	$$\langle \wtu, \phi \rangle_\nu = (f,\phi) - \langle g_1, \phi \rangle_\nu,$$
	$$\langle \overline{u},\phi\rangle_\nu = (f,\phi) - \langle g_2, \phi \rangle_\nu.$$
	Therefore
	$$\langle \wtu + g_1 - (\overline{u} + g_2), \phi\rangle_\nu = 0 \ \ \ \ \ \hbox{for every } \phi\in\hkrn.$$
	In particular,
	$$\langle \wtu + g_1 - (\overline{u} + g_2),\wtu + g_1 - (\overline{u} + g_2) \rangle_\nu = 0.$$
	By the coercivity of $\langle \cdot, \cdot \rangle_\nu$ on $\hkrn$, we get $\wtu + g_1 = \overline{u} + g_2$ a.e. on $\mR^n$, as claimed. That proves the uniqueness of the solution.
\end{proof}
\begin{rem}
	The Poincar\'e inequality is well-known for the transient L\'evy processes, see e.g. \cite{FITZSIMMONS2000548} (1.18), or \cite{Fukushima} Theorem 2.4.2 with $d\mu = \textbf{1}_\Omega dx$. Note that not every process with generator given by \eqref{PV} is transient, see e.g. Example 35.7 in \cite{Sato}.
\end{rem}

\noindent In the sequel we shall explain why the definition of weak solutions is appropriate.\\
Let us recall the proof of the fact that being a weak solution is equivalent to being a variational solution i.e., minimizing a certain energy functional.
\begin{lem}\label{vari}
A function $u\in\vkrn$ is a solution to \eqref{WEAK} if and only if $u=g$ a.e. in $\Omega^c$ and $u$ minimizes the energy functional
\begin{equation}
E(u) = \frac 14 \bezomkw (u(x) - u(y))^2 d\nu_x(y) dx - \poom fu
\end{equation}
among the functions equal almost everywhere to $g$ on $\Omega^c$.
\end{lem}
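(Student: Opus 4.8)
The plan is the standard ``complete the square'' argument for a convex quadratic functional. Write $Q(w):=\tfrac14\bezomkw (w(x)-w(y))^2\,d\nu_x(y)\,dx$, so that $E(w)=Q(w)-\poom fw$. First I would record that $Q(w)<\infty$ for every $w\in\vkrn$: decomposing $\mR^{2n}\setminus(\Omega^c\times\Omega^c)$ into $\Omega\times\Omega$, $\Omega\times\Omega^c$ and $\Omega^c\times\Omega$, the last contribution equals the middle one by the corollary following Lemma \ref{sym}, and hence $Q(w)$ is dominated by a multiple of $\int_\Omega\porn(w(x)-w(y))^2\,d\nu_y(x)\,dy<\infty$. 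Thus $E$ is a well-defined real functional on the class of competitors $\{v\in\vkrn:\ v=g\text{ a.e. in }\Omega^c\}$ (for $v\notin\vkrn$ one checks $E(v)=+\infty$, so such $v$ are irrelevant to minimization). The whole proof then rests on the identity: for $w\in\vkrn$ and $\phi\in\hkrn$,
\begin{equation*}
Q(w+\phi)=Q(w)+\langle w,\phi\rangle_\nu+Q(\phi).
\end{equation*}
Expanding the square produces $Q(w)+Q(\phi)$ plus the cross term $\tfrac12\bezomkw (w(x)-w(y))(\phi(x)-\phi(y))\,d\nu_x(y)\,dx$; since $\phi\equiv 0$ a.e. in $\Omega^c$, the integrand of this cross term vanishes on $\Omega^c\times\Omega^c$, so it equals the corresponding integral over all of $\mR^{2n}$, which, by Lemma \ref{sym} (in polarized form, via $4ab=(a+b)^2-(a-b)^2$, using also the symmetry of $\nu$ to match the two iterated integrals), is precisely $\langle w,\phi\rangle_\nu$ in the sense of \eqref{WEAK}. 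All these integrals converge absolutely by the Cauchy--Schwarz inequality for the form $Q$, since $w,\phi\in\vkrn$.

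Granting the identity, both implications are immediate. Every competitor has the form $v=u+\phi$ with $\phi:=v-u\in\hkrn$ — the same reformulation of ``$v\in\vkrn$, $v=g$ a.e. in $\Omega^c$'' used in the proof of Theorem \ref{exigen} — and then
\begin{equation*}
E(v)=E(u)+\Bigl(\langle u,\phi\rangle_\nu-\poom f\phi\Bigr)+Q(\phi).
\end{equation*}
If $u$ solves \eqref{WEAK}, the middle bracket vanishes for every $\phi\in\hkrn$ and $Q(\phi)\ge 0$, so $E(v)\ge E(u)$; hence $u$ minimizes $E$. Conversely, if $u$ minimizes $E$ (and $u=g$ a.e. in $\Omega^c$, which is part of the hypothesis), fix $\phi\in\hkrn$ and apply the identity with $t\phi$ in place of $\phi$, $t\in\mR$:
\begin{equation*}
E(u+t\phi)=E(u)+t\Bigl(\langle u,\phi\rangle_\nu-\poom f\phi\Bigr)+t^2 Q(\phi).
\end{equation*}
The right-hand side is a polynomial of degree at most two in $t$, with nonnegative leading coefficient, attaining its minimum at $t=0$; this forces the linear coefficient $\langle u,\phi\rangle_\nu-\poom f\phi$ to vanish. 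Since $\phi\in\hkrn$ is arbitrary, $u$ solves \eqref{WEAK}.

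There is no serious obstacle here; the only step that requires genuine care is the reduction of the cross term to $\langle u,\phi\rangle_\nu$, where one must (i) replace integration over $\mR^{2n}$ by integration over $\mR^{2n}\setminus(\Omega^c\times\Omega^c)$ using $\phi\equiv 0$ on $\Omega^c$, (ii) identify the two ways of iterating the double integral by the symmetry of $\nu$ (Lemma \ref{sym} and the corollary after it), and (iii) verify absolute convergence so that splitting the expanded square is legitimate. All three are routine given $w\in\vkrn$ and $\phi\in\hkrn\subseteq\vkrn$; the remainder is elementary one-variable calculus.
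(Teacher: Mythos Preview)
Your proof is correct and follows essentially the same route as the paper: expand $E(u+\lambda\phi)-E(u)$ as a quadratic in $\lambda$ with cross term $\langle u,\phi\rangle_\nu-\int_\Omega f\phi$ and nonnegative quadratic part, then read off both implications from this expansion. You are somewhat more explicit than the paper about why the cross term in $Q$ coincides with $\langle u,\phi\rangle_\nu$ (using $\phi\equiv 0$ on $\Omega^c$) and about finiteness of $Q$ on $\vkrn$, but the argument is the same.
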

\begin{proof}
Let $u\in V_g = \vkrn\cap \{h: h=g \hbox{ a.e. in }\Omega^c \}$ minimize $E$ among the functions from $V_g$. Then, for every $\phi \in\hkrn$ and every $\lambda \in\mR$, we have $u + \lambda \phi \in V_g$, hence
\begin{align*}
0 \leq E(u+\lambda \phi) - E(u) = \lambda \left(\langle u,\phi \rangle_\nu  - \poom f\phi\right) + \frac 12 \lambda^2 \langle \phi, \phi \rangle_\nu.
\end{align*}
For $\lambda > 0$, dividing both sides by $\lambda$ and taking the limit $\lambda \to 0^+$ gives
\begin{equation}\label{geq}
\langle u,\phi \rangle_\nu - \poom f\phi \geq 0.
\end{equation}
The same procedure for $\lambda < 0$ yields
\begin{equation}\label{leq}
\langle u, \phi \rangle_\nu - \poom f\phi \leq 0.
\end{equation}
By \eqref{geq} and \eqref{leq}, $u$ is a weak solution.\\
Now, assume that $u$ is a weak solution. Note that $V_g = u + \hkrn$, thus it suffices to check that $E(u + \phi) - E(u) \geq 0$ for every $\phi \in \hkrn$. In fact, since $u$ is a weak solution,
\begin{align*}
E(u+\phi) - E(u) = \langle u,\phi \rangle_\nu - \poom f\phi + \langle \phi, \phi \rangle_\nu = \langle \phi, \phi \rangle_\nu \geq 0.
\end{align*}

\end{proof}
We will show that if a function $u$ satisfying \eqref{DP} is sufficiently regular, then it is a weak solution.
\begin{lem}\label{smthhkrn}
If $u$ is locally Lipschitz and bounded, then $u\in\vkrn$.
\end{lem}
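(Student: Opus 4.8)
The plan is to bound $\|u\|_{\vkrn}$ directly by splitting it into the $L^2$ part and the bilinear part, controlling the latter by separating small and large jumps $z$. Note first that a locally Lipschitz function is continuous, hence in $L^0(\mR^n)$. Since $\Omega$ is bounded and $u$ is bounded, the term $\|u\|_{L^2(\Omega)}^2 \le \|u\|_\infty^2\,|\Omega|$ is obviously finite, so the whole matter reduces to showing $\poom\porn (u(x)-u(y))^2\,d\nu_y(x)\,dy < \infty$. After the change of variables $x = y+z$ this integral equals $\poom\porn (u(y+z)-u(y))^2\,d\nu(z)\,dy$, and I will estimate the inner integrand on the regions $\{|z|\ge 1\}$ and $\{|z|<1\}$ separately, exactly as in the decomposition underlying \eqref{finit} and \eqref{linf}.

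On $\{|z|\ge 1\}$ I use only the boundedness of $u$: $(u(y+z)-u(y))^2 \le 4\|u\|_\infty^2$, and since $\nu(B(0,1)^c) < \infty$ by \eqref{Levy}, this part of the double integral is at most $4\|u\|_\infty^2\,\nu(B(0,1)^c)\,|\Omega| < \infty$. On $\{|z|<1\}$ I invoke the local Lipschitz property via a compactness argument: the set $K := \overline{\Omega} + \overline{B(0,1)}$ is compact, so covering it by finitely many balls on which $u$ is Lipschitz produces constants $\delta\in(0,1)$ and $L>0$ with $|u(a)-u(b)| \le L|a-b|$ whenever $a,b\in K$ and $|a-b|<\delta$. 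For $y\in\Omega$ and $|z|<1$ both $y$ and $y+z$ lie in $K$; if $|z|<\delta$ then $(u(y+z)-u(y))^2 \le L^2|z|^2$, while if $\delta\le|z|<1$ then $(u(y+z)-u(y))^2 \le 4\|u\|_\infty^2 \le (4\|u\|_\infty^2/\delta^2)\,|z|^2$. Hence there is a constant $C$, depending on $u$ and $\delta$ but not on $y$ or $z$, such that $(u(y+z)-u(y))^2 \le C\,(1\wedge|z|^2)$ for every $z\in\mR^n$, and therefore the double integral is bounded by $C\,|\Omega|\porn(1\wedge|z|^2)\,d\nu(z) < \infty$, again by \eqref{Levy}. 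Combining the two parts gives $\|u\|_{\vkrn}<\infty$, i.e. $u\in\vkrn$.

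The only step that requires a little care is the extraction of the uniform pair $(\delta,L)$ on the compact set $K$ from the merely pointwise local Lipschitz hypothesis, which is a routine finite-subcover argument; everything else is a straightforward application of the integrability condition $\porn(1\wedge|y|^2)\,d\nu(y)<\infty$ together with the boundedness of $\Omega$ and of $u$.
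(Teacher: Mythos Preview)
Your proof is correct and follows essentially the same approach as the paper: both establish the pointwise bound $(u(y)-u(y+z))^2 \le C(1\wedge|z|^2)$ for $y\in\Omega$ by using the (local) Lipschitz condition for $|z|\le 1$ and the boundedness of $u$ for $|z|>1$, then integrate against $\nu$ and over $\Omega$. The paper's argument is terser---it simply asserts the bound from ``the Lipschitz condition and the boundedness of $\Omega$'' and then cites Lemma~\ref{sym}---whereas you spell out the finite-subcover extraction of a uniform $(\delta,L)$ on the compact set $K=\overline{\Omega}+\overline{B(0,1)}$, which is the honest content behind the paper's one-line justification.
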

\begin{proof}
	Note that there exists $C>0$ such that for every $x\in\Omega$ we have $(u(x) - u(x+y))^2 \leq C(1\wedge |y|^2)$. Indeed, when $|y|\leq 1$ the inequality follows from the Lipschitz condition and the boundedness of $\Omega$, while for $|y|>1$ we use the boundedness of $u$. Therefore
	\begin{equation*}
	\poom\porn (u(x) - u(x+y))^2 d\nu(y) dx \leq C\poom\porn (1\wedge |y|^2) d\nu(y)dx = C|\Omega|\porn (1\wedge|y|^2) d\nu(y) < \infty.
	\end{equation*}
	The statement follows from Lemma \ref{sym}.
\end{proof}
\begin{thm}\label{strwk}
If $u\in C^2_b(\mR^n)$ is a solution to \eqref{DP}, then it is also a weak solution.
\end{thm}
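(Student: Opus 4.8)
The plan is to check the three requirements in the definition of a weak solution. The condition $u=g$ a.e.\ in $\Omega^c$ is immediate from \eqref{DP}. To see that $u\in\vkrn$, I would observe that a function in $C^2_b(\mR^n)$ is bounded and globally Lipschitz, hence in particular locally Lipschitz and bounded, so Lemma \ref{smthhkrn} yields the finiteness of the form part of the norm, while $\|u\|_{L^2(\Omega)}<\infty$ since $\Omega$ is bounded and $u$ is bounded. The substance of the proof is therefore the identity $\langle u,\phi\rangle_\nu=(f,\phi)$ for every $\phi\in\hkrn$.

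The main difficulty is that $Lu$ is only a principal value, so I would keep an $\eps$-truncation in all integral manipulations and pass to the limit only at the end. Note first that $\phi\in L^2(\Omega)\subseteq L^1(\Omega)$. I would fix $\eps>0$ and set
$$A_\eps=\porn\int_{\mR^n\setminus B(0,\eps)}(u(x)-u(x+y))(\phi(x)-\phi(x+y))\,d\nu(y)\,dx.$$
Since $|u(x)-u(x+y)|\le 2\|u\|_\infty$, the integrand vanishes unless $x\in\Omega$ or $x+y\in\Omega$, and $\nu(B(0,\eps)^c)<\infty$, a crude bound together with the translation invariance of Lebesgue measure shows that $A_\eps$ is absolutely convergent; so are $B_\eps=\porn\int_{\mR^n\setminus B(0,\eps)}(u(x)-u(x+y))\phi(x)\,d\nu(y)\,dx$ and the analogous integral with $\phi(x+y)$ in place of $\phi(x)$. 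Expanding $A_\eps$ with Fubini, substituting $z=x+y$ in the $\phi(x+y)$-term, and using $\nu(-A)=\nu(A)$ turns that term into $-B_\eps$, whence $A_\eps=2B_\eps$.

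Finally I would let $\eps\to 0^+$. Since $\phi$ is supported in $\Omega$, $B_\eps=\poom\phi(x)\big(\int_{\mR^n\setminus B(0,\eps)}(u(x)-u(x+y))\,d\nu(y)\big)\,dx$; by the symmetry of $\nu$ the inner integral equals $\tfrac12\int_{\mR^n\setminus B(0,\eps)}(2u(x)-u(x+y)-u(x-y))\,d\nu(y)$, which by \eqref{linf} is bounded uniformly in $x$ and $\eps$ by $\tfrac12 C\porn(1\wedge|y|^2)\,d\nu(y)$, while by \eqref{NOPV} it converges pointwise to $Lu(x)=f(x)$ on $\Omega$; dominated convergence with majorant a constant times $|\phi|\in L^1(\Omega)$ then gives $B_\eps\to(f,\phi)$. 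On the other hand, applying the Cauchy--Schwarz inequality separately on $\{x\in\Omega\}$ and $\{x+y\in\Omega\}$ (reducing the latter to the former by the change of variables $(x,y)\mapsto(x+y,-y)$), and using $u\in\vkrn$ together with $\phi\in\hkrn\subseteq\vkrn$, bounds the untruncated integrand of $A_\eps$ by an integrable function, so $A_\eps\to 2\langle u,\phi\rangle_\nu$ by dominated convergence. Combining, $2\langle u,\phi\rangle_\nu=2(f,\phi)$, which is the asserted identity. The only real obstacle is the bookkeeping forced by the principal value: one must carry the truncation through every change of variables and extract the uniform bound from the $C^2_b$-regularity of $u$ via \eqref{linf}.
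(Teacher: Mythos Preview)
Your proposal is correct and follows essentially the same route as the paper: truncate at level $\eps$, use the symmetry of $\nu$ together with the change of variables $x\mapsto x+y$ to identify the one-sided integral $B_\eps$ with half of the symmetric one $A_\eps$, and then pass to the limit via dominated convergence, with the uniform bound coming from \eqref{linf}. The paper organizes the computation starting from $(f,\phi)$ and working toward $\langle u,\phi\rangle_\nu$, and invokes Lemma \ref{smthhkrn} at the end to justify the final dominated convergence; your version is slightly more symmetric in that you set up $A_\eps$ and $B_\eps$ in parallel and spell out the Cauchy--Schwarz majorant on $\{x\in\Omega\}\cup\{x+y\in\Omega\}$, but the substance is the same.
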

\begin{proof}
Assume that $u\in C^2_b(\mR^n)$ and let $\varepsilon >0$. By Proposition \ref{smthint}, $Lu(x) = \frac 12\porn (2u(x) - u(x+y) - u(x-y)) d\nu(y)$ converges absolutely and $f:= Lu \in L^2(\Omega)$. If $\phi\in\hkrn$, then by Tonelli's theorem
\begin{align*}
&\int\limits_{\Omega\times \mR^n} \frac 12\left|\phi(x)(2u(x) - u(x+y) - u(x-y))\right| d\nu(y) dx 
\\ &\leq C\int\limits_{\Omega \times \mR^n} |\phi(x)|(1\wedge |y|^2) d\nu(y) dx \nonumber
\\ &= C\poom |\phi(x)| \porn (1 \wedge |y|^2) d\nu(y) dx < \infty. \nonumber
\end{align*} 
By this and the dominated convergence theorem, for every $\phi\in\hkrn$ we have 
\begin{align}
\poom f \phi &= \poom Lu(x) \phi(x) dx \nonumber
\\ &= \poom\porn \frac 12 \phi(x)  (2u(x) - u(x+y) - u(x-y)) d\nu(y) dx \nonumber
\\ \label{przspl} &= \lim\limits_{\varepsilon \to 0^+} \poom\phi(x) \int\limits_{B(0,\varepsilon)^c}\frac 12 (2u(x) - u(x+y) - u(x-y)) d\nu(y) dx 
\\ \label{pospl} &= \lim\limits_{\varepsilon \to 0^+} \poom\phi(x) \int\limits_{B(0,\varepsilon)^c}(u(x) - u(x+y))d\nu(y) dx
\\ &= \lim\limits_{\varepsilon \to 0^+} \porn\phi(x) \int\limits_{B(0,\varepsilon)^c}(u(x) - u(x+y))d\nu(y) dx. \nonumber
\end{align}
Splitting the integral in \eqref{przspl} is legitimate, since the integral over $B_0(\varepsilon)^c$ in \eqref{pospl} is bounded as a function of $x$. This was shown in the proof of Proposition \ref{smthint}. By the symmetry of $\nu$ and $B(0,\varepsilon)$, and translation invariance of Lebesgue measure, we have
\begin{align*}
&\porn \phi(x) \int\limits_{B(0,\varepsilon)^c} (u(x) - u(x+y)) d\nu(y) dx
\\&= \int\limits_{B(0,\varepsilon)^c} \porn \phi(x) (u(x) - u(x+y)) dx d\nu(y)
\\&= \int\limits_{B(0,\varepsilon)^c} \porn \phi(x-y)(u(x-y) - u(x)) dx d\nu(y)
\\&= \int\limits_{B(0,\varepsilon)^c} \porn \phi(x+y)(u(x+y) - u(x)) dx d\nu(y)
\\&= -\porn \int\limits_{B(0,\varepsilon)^c}  \phi(x+y)(u(x) - u(x+y)) d\nu(y) dx.
\end{align*}
Therefore
\begin{align*}
&\lim\limits_{\varepsilon \to 0^+} \porn\phi(x) \int\limits_{B(0,\varepsilon)^c}(u(x) - u(x+y))d\nu(y) dx
\\&= \frac 12 \lim\limits_{\varepsilon \to 0^+} \porn \int\limits_{B(0,\varepsilon)^c} (u(x) - u(x+y))(\phi(x) - \phi(x+y))d\nu(y) dx
\\ &= \frac 12 \porn\porn (u(x) - u(x+y))(\phi(x) - \phi(x+y))d\nu(y) dx. 
\end{align*}
The last equality follows from Lemma \ref{smthhkrn}, which yields the absolute convergence of the last integral, and from the dominated convergence theorem.\\
\end{proof}

In the strong case, it is obvious that the solutions are stable under taking subspaces, i.e. if $Lu = f$ in $\Omega$, then $Lu = f$ in $\Omega' \subseteq \Omega$. A similar fact is true for weak solutions.
\begin{prop}
Let $\Omega' \subseteq \Omega$, $f \in L^2(\Omega)$, $u\in V_\nu^{\Omega}(\mR^n)$, and let $\langle u, \phi \rangle_\nu = (f,\phi)_{\Omega}$ for every $\phi \in H_\nu^{\Omega}(\mR^n)$. Then  $\langle u, \psi \rangle_\nu = (f,\psi)_{\Omega'}$ for every $\psi \in H_\nu^{\Omega'}(\mR^n)$.
\end{prop}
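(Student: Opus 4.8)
The statement merely records that shrinking $\Omega$ to $\Omega'$ shrinks the class of admissible test functions, so the plan is: establish the inclusion $H_\nu^{\Omega'}(\mR^n)\subseteq\hkrn$, and then restrict the variational identity. For the inclusion, let $\psi\in H_\nu^{\Omega'}(\mR^n)$; then $\psi\in V_\nu^{\Omega'}(\mR^n)$ and $\psi\equiv 0$ a.e.\ in $(\Omega')^c$. By the norm equivalence proved above ($\|\psi\|_{V_\nu^{\Omega'}(\mR^n)}\le\|\psi\|_{\hrn}\le 2\|\psi\|_{V_\nu^{\Omega'}(\mR^n)}$ on $H_\nu^{\Omega'}(\mR^n)$) we have $\psi\in\hrn$. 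A fortiori $\psi\in\vkrn$, since the seminorm defining $\vkrn$ integrates the outer variable over $\Omega$ rather than over all of $\mR^n$, so $\|\psi\|_{\vkrn}\le\|\psi\|_{\hrn}<\infty$. Moreover $\Omega'\subseteq\Omega$ gives $\Omega^c\subseteq(\Omega')^c$, so $\psi\equiv 0$ a.e.\ in $\Omega^c$ as well; hence $\psi\in\hkrn$.

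Now take an arbitrary $\psi\in H_\nu^{\Omega'}(\mR^n)$. Using it as a test function in the hypothesis (legitimate by the previous paragraph) gives $\langle u,\psi\rangle_\nu=(f,\psi)_\Omega$. Next, $(f,\psi)_\Omega=(f,\psi)_{\Omega'}+\int_{\Omega\setminus\Omega'}f\psi\,dx$, and the last integrand vanishes a.e.\ because $\Omega\setminus\Omega'\subseteq(\Omega')^c$ and $\psi\equiv 0$ a.e.\ there; hence $(f,\psi)_\Omega=(f,\psi)_{\Omega'}$. Combining the two identities yields $\langle u,\psi\rangle_\nu=(f,\psi)_{\Omega'}$ for every $\psi\in H_\nu^{\Omega'}(\mR^n)$, which is the claim. (Here $f$, a priori only in $L^2(\Omega)$, is tacitly identified with its restriction to $\Omega'$, which lies in $L^2(\Omega')$.)

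I do not anticipate any genuine obstacle. The only point needing a moment's care is the inclusion $H_\nu^{\Omega'}(\mR^n)\subseteq\hkrn$ --- that enlarging the ``localization set'' from $\Omega'$ to $\Omega$ neither spoils finiteness of the Sobolev seminorm (handled by passing through $\hrn$ as above) nor affects the exterior vanishing condition (which only gets weaker). Everything else is a one-line computation.
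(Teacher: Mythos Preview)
Your proof is correct and follows essentially the same route as the paper: establish the inclusion $H_\nu^{\Omega'}(\mR^n)\subseteq\hkrn$, plug $\psi$ into the variational identity on $\Omega$, and reduce $(f,\psi)_\Omega$ to $(f,\psi)_{\Omega'}$ using that $\psi$ vanishes on $\Omega\setminus\Omega'$. The paper states the inclusion without argument (and also records $u\in V_\nu^{\Omega'}(\mR^n)$ to conclude that $u$ is a weak solution in $\Omega'$), whereas you justify it explicitly by passing through $\hrn$; otherwise the arguments are identical.
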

\begin{proof}
Note that $u\in V_\nu^{\Omega'}(\mR^n)$, and $H_\nu^{\Omega'}(\mR^n)\subseteq \hkrn$. Therefore, for every $\psi\in H_\nu^{\Omega'}(\mR^n)$ we have
\begin{equation*}
\langle u,\psi \rangle_\nu = (u,\psi)_{\Omega} = (u,\psi)_{\Omega'},
\end{equation*}
i.e. $u$ is a weak solution in $\Omega'$.
\end{proof}

\section{Maximum principle and its applications}\label{sec:comp}
\subsection{Comparison principle}
We present the so-called maximum and comparison principle for the nonlocal operator $L$. Analogous results were given for the fractional Laplacian in \cite{servadei2014}, see also the discussion in \cite{RosOton}.
\begin{thm}[Weak maximum principle]\label{wkmax}
Let $u$ be a weak solution to \eqref{DP} with $f \geq 0$, $g\geq 0$ a.e. Then $u \geq 0$ a.e.
\end{thm}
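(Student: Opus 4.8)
The plan is to test the weak formulation \eqref{WEAK} against the negative part of $u$. Put $v := (-u)\vee 0 \geq 0$, so that $u = (u\vee 0) - v$. Since $u$ solves \eqref{DP}, $u = g \geq 0$ a.e. in $\Omega^c$, hence $v \equiv 0$ a.e. in $\mR^n\backslash\Omega$. I would first check that $v \in \hkrn$: the map $t\mapsto (-t)\vee 0$ is $1$-Lipschitz and vanishes at $0$, so $|v|\leq|u|$ and $|v(x)-v(y)|\leq|u(x)-u(y)|$ pointwise; consequently $\|v\|_{L^2(\Omega)}\leq\|u\|_{L^2(\Omega)}$ and $\poom\porn (v(x)-v(y))^2 d\nu_y(x)\,dy \leq \poom\porn (u(x)-u(y))^2 d\nu_y(x)\,dy < \infty$, so $v\in\vkrn$, and together with $v\equiv0$ a.e. outside $\Omega$ this gives $v\in\hkrn$, i.e. $v$ is an admissible test function.

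Next I would record the elementary inequality: for all $a,b\in\mR$ one has $(a^+-b^+)(a^--b^-)\leq 0$, where $a^\pm$ denote the positive and negative parts (the product vanishes unless $a,b$ have opposite signs, in which case one factor is $\geq 0$ and the other $\leq 0$). Writing $a-b=(a^+-b^+)-(a^--b^-)$, this yields
\begin{equation*}
(a-b)\big(\big((-a)\vee 0\big)-\big((-b)\vee 0\big)\big)=(a-b)(a^--b^-)\leq -(a^--b^-)^2.
\end{equation*}
Applying this with $a=u(x)$, $b=u(y)$ and integrating against $\tfrac12\,d\nu_y(x)\,dy$ over $\mR^n\times\mR^n$ — the integral is absolutely convergent because the integrand is dominated by $|u(x)-u(y)|\,|v(x)-v(y)|$, which is integrable by the Cauchy--Schwarz inequality (using $u,v\in\vkrn$, that $v$ is supported in $\Omega$, and Lemma \ref{sym}) — we obtain
\begin{equation*}
\langle u,v\rangle_\nu \leq -\langle v,v\rangle_\nu.
\end{equation*}
On the other hand, taking $\phi=v$ in \eqref{WEAK} gives $\langle u,v\rangle_\nu=(f,v)=\poom fv\geq 0$, since $f\geq0$ a.e. in $\Omega$ and $v\geq0$. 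Combining these, $0\leq\langle u,v\rangle_\nu\leq-\langle v,v\rangle_\nu\leq 0$, so $\langle v,v\rangle_\nu=0$. Since $v\in\hkrn$, the Poincar\'e inequality (Theorem \ref{poingen}) forces $\|v\|_{L^2(\Omega)}^2\leq C(\nu,\Omega)\langle v,v\rangle_\nu=0$, hence $v\equiv0$ a.e. in $\Omega$; together with $v\equiv0$ a.e. in $\Omega^c$ this shows $v\equiv0$ a.e. in $\mR^n$, i.e. $u\geq0$ a.e.

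The Lipschitz truncation estimates and the sign analysis behind the elementary inequality are routine. The only step needing a little care — and the one I would single out as the main (minor) obstacle — is verifying that $v$ is genuinely an element of $\hkrn$ and that $\langle u,v\rangle_\nu$ is well defined and finite for $u\in\vkrn$, $v\in\hkrn$ (so that both the weak equation \eqref{WEAK} and the integration of the pointwise inequality are legitimate); this is precisely where one invokes Cauchy--Schwarz together with the support of $v$ and Lemma \ref{sym}.
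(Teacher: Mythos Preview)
Your proof is correct and follows essentially the same approach as the paper: test against the negative part $v=u_-$, verify $v\in\hkrn$ via the $1$-Lipschitz property of the truncation, use the pointwise sign inequality to get $\langle u,v\rangle_\nu\leq -\langle v,v\rangle_\nu$, combine with $(f,v)\geq 0$, and conclude via the Poincar\'e inequality. The only cosmetic difference is that the paper writes the key inequality as $\langle u_+,u_-\rangle_\nu\leq 0$ (after splitting $u=u_+-u_-$), whereas you package it directly as $(a-b)(a^--b^-)\leq -(a^--b^-)^2$; these are equivalent, and your added remark on the integrability of $\langle u,v\rangle_\nu$ is a welcome point of rigor.
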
 
\begin{proof}
We want to use $u_- = - (u\wedge 0)$ as the test function $\phi$ in \eqref{WEAK}. We claim that it is in $\hkrn$. Indeed, we have $g\geq 0$, hence $u_- = 0$ outside $\Omega$. Of course $u_- \in L^2(\mR^n)$. The integrability condition from Definition \ref{hkomega} follows from $(u_-(x) - u_-(y))^2 \leq (u(x) - u(y))^2$. This verifies the claim.\\
Since $u$ is a weak solution, by Lemma \ref{delty} and the fact that for any function $u$, $(u_+(x) - u_+(y))(u_-(x) - u_-(y)) \leq 0$, we get
\begin{align}
&0 \leq \poom f(x)u_-(x)dx = \langle u,u_-\rangle_\nu =  \langle u_+,u_-\rangle_\nu - \langle u_-, u_- \rangle_\nu \leq -\langle u_-, u_- \rangle_\nu. \nonumber
\end{align}
Since we also have $\langle u_-, u_- \rangle_\nu \geq 0$, we see that $\langle u_-, u_- \rangle_\nu = 0$. By the Poincar\'e inequality \eqref{Poincare} (which we can use, because $u_-\in \hkrn$) we conclude that $u_- = 0$ a.e. in $\Omega$.
\end{proof}
\begin{cor}[Weak comparison principle]\label{comp}
If $u,v$ solve \eqref{WEAK} with $f = f_u, g = g_u$ $f=f_v$, and $g=g_v$ respectively, and if $f_u \geq f_v$, $g_u \geq g_v$ , then $u \geq v$.
\end{cor}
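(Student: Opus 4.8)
The plan is to reduce the comparison principle to the weak maximum principle (Theorem \ref{wkmax}) by exploiting the linearity of the weak formulation. Set $w = u - v$. First I would check that $w$ is itself a weak solution to \eqref{DP} with data $f := f_u - f_v$ and $g := g_u - g_v$. For this one needs $w \in \vkrn$, which holds because $\vkrn$ is a vector space: the $L^2(\Omega)$ component of $\|\cdot\|_{\vkrn}$ is a genuine norm, and for the double-integral component the triangle inequality follows from the Cauchy--Schwarz inequality for $\langle\cdot,\cdot\rangle_\nu$, exactly as in the proof that $\hrn$ is a Hilbert space. Moreover $w = g_u - g_v$ a.e. in $\Omega^c$, and by the bilinearity of $\langle\cdot,\cdot\rangle_\nu$ and of the $L^2(\Omega)$ pairing,
\[
\langle w,\phi\rangle_\nu = \langle u,\phi\rangle_\nu - \langle v,\phi\rangle_\nu = (f_u,\phi) - (f_v,\phi) = (f_u - f_v,\phi)
\]
for every $\phi\in\hkrn$, so $w$ satisfies \eqref{WEAK} with right-hand side $f = f_u - f_v$.

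Next, the hypotheses $f_u \geq f_v$ and $g_u \geq g_v$ give $f = f_u - f_v \geq 0$ a.e. in $\Omega$ and $g = g_u - g_v \geq 0$ a.e. in $\Omega^c$. Hence $w$ meets the assumptions of Theorem \ref{wkmax}, which yields $w \geq 0$ a.e., that is, $u \geq v$ a.e.

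I do not expect a genuine obstacle here: the whole content is the observation that the problem \eqref{DP} and its weak formulation \eqref{WEAK} are linear in the triple $(f,g,u)$, so the difference of two solutions solves the problem associated with the difference of the data, after which Theorem \ref{wkmax} applies verbatim. The only point deserving an explicit word is that $\vkrn$ is closed under subtraction, and as noted this is immediate from the Cauchy--Schwarz inequality for $\langle\cdot,\cdot\rangle_\nu$.
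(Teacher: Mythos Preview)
Your proposal is correct and follows exactly the paper's approach: the paper's proof is the single line ``Take $u-v$ in the theorem above,'' and you have simply spelled out the routine verifications (that $u-v\in\vkrn$, that it satisfies \eqref{WEAK} with data $f_u-f_v$, $g_u-g_v$, and that Theorem~\ref{wkmax} then applies).
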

\begin{proof}
Take $u-v$ in the theorem above.
\end{proof}
Let us reformulate Theorem \ref{wkmax} for the (strong) solutions of \eqref{DP}, to justify calling it \textit{maximum} (or rather \textit{minimum}) \textit{principle}.
\noindent In the following theorems we do not make any assumptions on $\nu$ apart from those in \eqref{Levy}. Recall that $D(L,\Omega)$ contains functions $u$, for which $Lu(x)$ exists for every $x\in\Omega$.
\begin{thm}\label{strmax}
If $u\in D(L,\Omega) \cap C(\mR^n)$ satisfies $Lu \geq 0$ in $\Omega$, and $u \geq 0$ outside $\Omega$, then $u \geq 0$ a.e. in $\Omega$.
\end{thm}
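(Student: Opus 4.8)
The plan is to argue by contradiction: assume $u$ takes a value $<0$ somewhere in $\Omega$, pick a point $x_0$ where the global minimum of $u$ is attained, and show that the minimum value is forced to spread along iterated translates of $\operatorname{supp}\nu$ until it reaches $\Omega^c$, where $u\ge 0$ — a contradiction with boundedness of $\Omega$. One should first observe that $\nu\not\equiv 0$ is genuinely needed here (if $\nu=0$ then $L\equiv 0$ and the statement fails, e.g. for $u(x)=(|x|^2-1)\wedge 0$ on $\Omega=B(0,1)$), so we may assume $\operatorname{supp}\nu\neq\emptyset$.

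First I would set $m:=\inf_{\mR^n}u$. Since $u$ is continuous, $\overline{\Omega}$ is compact and $u\ge 0$ on $\Omega^c\supseteq\partial\Omega$, the infimum is attained, and if $m<0$ it is attained only at points of $\Omega$; fix such an $x_0$ and let $Z=\{u=m\}$, which is closed and, since $m<0\le u$ on $\Omega^c$, satisfies $Z\subseteq\Omega$. The core of the argument is a pointwise estimate at an arbitrary $x\in Z$: there $x\in\Omega$, so $Lu(x)$ exists and $Lu(x)\ge 0$; rewriting the principal value in the symmetric form $\tfrac12\rnbeze(2u(x)-u(x+y)-u(x-y))\,d\nu(y)$ — legitimate for each $\varepsilon$ by the symmetry of $\nu$ and of $B(0,\varepsilon)$ — the integrand is $\le 0$ because $u(x)=m$ is the global minimum. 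Since $\varepsilon\mapsto\int_{B(0,\varepsilon)^c}(2u(x)-u(x+y)-u(x-y))\,d\nu(y)$ is then monotone (a nonpositive integrand over an increasing family of sets) and its $\varepsilon\to 0^+$ limit equals $2Lu(x)\ge 0$, every one of these integrals must vanish. Hence $2u(x)-u(x+y)-u(x-y)=0$ for $\nu$-a.e.\ $y$, and as each of the two terms is $\ge m$, in fact $u(x+y)=m$ for $\nu$-a.e.\ $y$; by continuity of $u$ the closed set $\{y:u(x+y)=m\}$ then contains $\operatorname{supp}\nu$, i.e.\ $x+\operatorname{supp}\nu\subseteq Z$.

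Finally I would iterate this conclusion: starting from $x_0\in Z$, the $k$-fold Minkowski sum $x_0+\operatorname{supp}\nu+\dots+\operatorname{supp}\nu$ stays inside $Z\subseteq\Omega$ for every $k\in\mN$. Choosing some $a\in\operatorname{supp}\nu\setminus\{0\}$ — which exists since $\nu(\{0\})=0$ and $\nu\not\equiv 0$ — the points $x_0+ka$ all lie in $\Omega$ while $|x_0+ka|\to\infty$, contradicting boundedness of $\Omega$. Therefore $m\ge 0$, i.e.\ $u\ge 0$ on $\overline{\Omega}$, which in particular gives $u\ge 0$ a.e.\ in $\Omega$.

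I expect the main obstacle to be the pointwise step at the minimum point, namely deducing from $Lu(x)\ge 0$ together with a nonpositive integrand that the integrand actually vanishes $\nu$-a.e.; this requires care because $Lu(x)$ is only a principal value, and the crucial point is precisely that once the integrand is nonpositive the truncated integrals are monotone in $\varepsilon$, so a nonnegative limit forces each of them to be zero. A secondary subtlety worth flagging is that $\operatorname{supp}\nu$ may lie in a proper subspace of $\mR^n$, so a single propagation step need not leave $\Omega$ — which is exactly why the iteration together with boundedness of $\Omega$ is used — and that continuity of $u$ enters twice, to attain the minimum and to upgrade "$\nu$-a.e." to "everywhere on $\operatorname{supp}\nu$".
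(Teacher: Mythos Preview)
Your proof is correct and follows essentially the same strategy as the paper's: argue by contradiction from a negative global minimum, observe that the nonpositive integrand at the minimizer forces $Lu(x)\le 0$ (hence $=0$), deduce that the minimum value propagates along translates of $\operatorname{supp}\nu$, and iterate until boundedness of $\Omega$ is violated. Your use of continuity to upgrade ``$\nu$-a.e.'' to ``everywhere on $\operatorname{supp}\nu$'' and then march along a single fixed vector $a\in\operatorname{supp}\nu\setminus\{0\}$ is a mild tidying of the paper's device of repeatedly selecting a positive-$\nu$-measure set that dominates one coordinate; your explicit remark that $\nu\not\equiv 0$ is genuinely required is also a worthwhile addition.
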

\begin{proof}
Assume by contradiction that $u(y) < 0$ for some $y\in\Omega$. Then, by continuity we conclude that $u$ has a global minimum at some $x\in\Omega$ . Since $u(x)$ is the global minimum of $u$, we have $u(x) - u(x+y) \leq 0$ for every $y\in\mR^n$. Therefore, by the monotone convergence theorem, we can drop the PV in \eqref{PV} getting $Lu(x) = \porn (u(x) - u(y))d\nu_x(y) = \porn (u(x) - u(x+y))d\nu(y) \leq 0$. If $\porn (u(x) - u(y)) d\nu_x(y) < 0$, then we get the desired contradiction. Otherwise, let $A\subset \mR^n$ be such that $\nu(A) > 0$, $\dist(0,A) > 0$. In addition, we want $A+x$ to dominate $x = (x_1,\ldots,x_n)$ on at least one coordinate, i.e. for some $k \in \{1,\ldots,n\}$ and every $y\in A+x$ we have $y_k - x_k \geq d  >  0$. Since $\porn (u(x) - u(y))d\nu_x(y) \leq \int\limits_{A+x} (u(x) - u(y))d\nu_x(y) = 0$, we get that $u(y) = u(x) < 0$ $\nu_x$-a.e. on $A+x$. Let $x_1 \in A + x$ be such that $u(x_1) = u(x)$. We have $(x_1)_k \geq x_k + d$. Once again, if $Lu(x_1) < 0$, then we have a contradiction, and if $Lu(x_1) = 0$ we repeat the procedure obtaining $x_2$, and so on. Since $A$ dominates $0$ and $\Omega$ is bounded, we will eventually get that for some $m$ either $Lu(x_m) < 0$ or $x_m \in \Omega^c$ and $u(x_m) = u(x) < 0$ which contradicts $u(y) > 0$ for $y \in \Omega^c$. 

\end{proof}
The first iteration of the argument above gives the proof of the negative minimum (equivalently - positive maximum) principle.
\begin{prop}\label{negmax}
If $u\in C(\mR^n)$ satisfying $u\geq 0$ outside $\Omega$ has a negative global minimum at $x\in \Omega$ and $u\in D(L,\Omega)$, then $Lu(x) \leq 0$. If the minimum is strict, then $Lu(x) < 0$.
\end{prop}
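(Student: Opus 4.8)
The plan is to isolate the first step of the iteration used in the proof of Theorem~\ref{strmax}, exploiting the fact that at a global minimum the integrand has a fixed sign, which makes any cancellation argument unnecessary. First I would establish $Lu(x)\le 0$: since $x$ is a global minimum, $u(x)-u(x+y)\le 0$ for every $y\in\mR^n$, so for each $\eps>0$ the truncated integral $\int_{B(0,\eps)^c}(u(x)-u(x+y))\,d\nu(y)$ has a nonpositive integrand. As $\eps\downarrow 0$ the sets $B(0,\eps)^c$ increase to $\mR^n\setminus\{0\}$; applying the monotone convergence theorem to $u(x+\cdot)-u(x)\ge 0$ and using $\nu(\{0\})=0$, these truncated integrals decrease to a limit, which is $Lu(x)$ — finite because $u\in D(L,\Omega)$ — and which is therefore $\le 0$. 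Note that continuity of $u$ plays no role in this part.

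For the strict case I would show the limit is bounded away from $0$. As $\nu$ is a nontrivial L\'evy measure there is $\eps>0$ with $\nu(B(0,\eps)^c)>0$, and monotone convergence lets me fix $R>\eps$ with $\nu(A)>0$ for the closed annulus $A=\{y\in\mR^n:\eps\le|y|\le R\}$. Since $A$ is compact and $0\notin A$, strictness of the minimum and continuity of $u$ give that $y\mapsto u(x)-u(x+y)$ attains on $A$ a maximum $-\delta$ with $\delta>0$. Discarding the nonpositive contribution of $B(0,\eps)^c\setminus A$ and using that the truncations decrease to $Lu(x)$,
\[
Lu(x)\le\int\limits_{B(0,\eps)^c}(u(x)-u(x+y))\,d\nu(y)\le\int\limits_{A}(u(x)-u(x+y))\,d\nu(y)\le-\delta\,\nu(A)<0.
\]

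The only place that needs care is justifying that $Lu(x)$ is the \emph{monotone} limit of the truncated integrals rather than merely a principal value that could a priori oscillate; this is exactly where the one-sided sign of the integrand at a global minimum does the work, so I expect no genuine obstacle beyond this bookkeeping.
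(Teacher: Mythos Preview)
Your argument is correct and follows the same approach as the paper: the paper simply says ``the first iteration of the argument above gives the proof,'' referring to the step in Theorem~\ref{strmax} where the nonpositive sign of $u(x)-u(x+y)$ at a global minimum allows one to drop the principal value via monotone convergence and conclude $Lu(x)\le 0$. Your treatment of the strict case via a compact annulus is more explicit than what the paper writes (it leaves that part entirely to the reader); a slightly shorter alternative is to note directly that a strictly negative measurable integrand over a set of positive $\nu$-measure yields a strictly negative integral, but your compactness argument is perfectly fine and makes the constant $-\delta\,\nu(A)$ explicit.
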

\begin{exa}
Without the assumption that the maximum at $x$ is strict, $Lu(x)$ is not necessarily strictly negative. Consider the L\'evy measure $\delta_1 + \delta_{-1}$ on $\mR$, let $\Omega = (-2,2)$ and let $u\in C_c^\infty(\mR)$ satisfy $0\geq u\geq -1$, $u(x)=0$ for $|x|>2$, $u(x)=-1$ for $|x|<3/2$. Clearly $Lu(0) = 0$. 
\end{exa}
By looking at the last iteration in the proof of Theorem \ref{strmax}, we can refine Proposition \ref{negmax}.
\begin{prop}
	If $u\in C(\mR^n)$ satisfying $u\geq 0$ outside $\Omega$ has a negative global minimum at $x\in \Omega$ and $u\in D(L,\Omega)$, then there exists $x'\in \Omega$ such that $u(x') = u(x)$, and $Lu(x') < 0$.
\end{prop}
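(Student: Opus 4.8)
The statement is a refinement of Proposition~\ref{negmax}, obtained (as the surrounding text hints) by chasing the chain of global minima constructed in the proof of Theorem~\ref{strmax} all the way to its end. Throughout I assume $\nu\not\equiv 0$, since otherwise $L\equiv 0$ and there is nothing to say. The plan is to argue by contradiction: suppose that $Lu(x')\geq 0$ for \emph{every} $x'\in\Omega$ with $u(x')=u(x)$. Each such $x'$ is itself a negative global minimum of $u$ lying in $\Omega$, so Proposition~\ref{negmax} improves this to $Lu(x')=0$ for all such $x'$.

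Next I would fix, once and for all, a Borel set $A\subseteq\mR^n$, a coordinate index $k\in\{1,\dots,n\}$, and a constant $d>0$ such that $\nu(A)>0$, $\dist(0,A)>0$, and $y_k\geq d$ for every $y\in A$; this is exactly the auxiliary set used in the proof of Theorem~\ref{strmax}. Its existence follows from \eqref{Levy}: pick any $B$ with $\nu(B)>0$ and $\dist(0,B)>0$, split $B$ into the $2n$ pieces on which a given signed coordinate is largest in modulus (ties broken arbitrarily), keep a piece of positive $\nu$-measure, and---replacing it by its reflection and invoking the symmetry of $\nu$ if necessary---obtain $A$ on which $y_k\geq |y|/\sqrt n\geq\dist(0,B)/\sqrt n=:d$.

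Then I would construct inductively a sequence $x=x_0,x_1,x_2,\dots$ with $u(x_m)=u(x)$ for every $m$, imitating the last paragraph of the proof of Theorem~\ref{strmax}. Given $x_m$: if $x_m\notin\Omega$, then $u(x_m)=u(x)<0$ contradicts $u\geq 0$ off $\Omega$; if $x_m\in\Omega$, then $x_m$ is a negative global minimum of $u$, so $u(x_m+y)-u(x_m)\geq 0$ for all $y$, the principal value in \eqref{PV} may be dropped by monotone convergence, and $-Lu(x_m)=\porn(u(x_m+y)-u(x_m))\,d\nu(y)$, which equals $0$ by the contradiction hypothesis. Nonnegativity of the integrand then forces $u(x_m+y)=u(x_m)=u(x)$ for $\nu$-a.e.\ $y$; since $\nu(A)>0$ we may choose $y_m\in A$ with $u(x_m+y_m)=u(x)$ and set $x_{m+1}:=x_m+y_m$, so that $(x_{m+1})_k\geq (x_m)_k+d$.

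Finally, because $k$ and $d$ were fixed (they depend only on $\nu$), we get $(x_m)_k\geq (x_0)_k+md$, which exceeds $\sup\{z_k: z\in\Omega\}$ for $m$ large; hence some $x_m$ leaves the bounded set $\Omega$, and for that $x_m$ we have $u(x_m)=u(x)<0$, contradicting $u\geq 0$ off $\Omega$. Thus the contradiction hypothesis is untenable, and there exists $x'\in\Omega$ with $u(x')=u(x)$ and $Lu(x')<0$. The one point that really deserves care---and the reason the argument closes---is the uniformity in the third step: the shift set $A$, hence the coordinate $k$ and the gap $d$, must be chosen independently of the running point $x_m$, which is precisely what lets the boundedness of $\Omega$ terminate the iteration. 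Beyond this bookkeeping there is no new difficulty compared with Theorem~\ref{strmax}; the whole proof is just its final iteration read off in isolation.
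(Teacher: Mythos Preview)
Your proof is correct and follows essentially the same route as the paper: the paper gives no separate argument here, merely pointing to ``the last iteration in the proof of Theorem~\ref{strmax}'', and your contradiction argument is precisely that iteration spelled out in full. If anything, you are more careful than the original---you explicitly construct the dominating set $A$ and stress that it is fixed once and for all, independent of the running point $x_m$, which is exactly what makes the boundedness of $\Omega$ terminate the chain.
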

\subsection{Barriers}
Let us construct barriers, i.e. compactly supported functions, smooth in $\Omega$, satisfying
$$\begin{cases}
Lw \geq 1 \hbox{ in } \Omega,\\
w \geq 0,\\
w \leq C \hbox{ in } \Omega,
\end{cases}$$
with $C$ depending on $\nu$ and $\Omega$.

Taking our cue from the work of Ros-Oton \cite{RosOton}, we use different approaches depending on whether $\nu$ is compactly supported or not.
\subsubsection{Barrier for $\nu$ with unbounded support}
Consider $R>0$ so large that $\overline{\Omega} \subset B_R$, and $\eta \in C_c^{\infty}(B_R)$ such that $0\leq \eta \leq 1$, for $x\in \mR^n$, and $\eta(x) = 1$ for $x\in\Omega$. Then $\eta(x) - \eta(x+y) \geq 0$ for $x\in \Omega, y \in \mR^n$. Thus we can drop the PV in \eqref{DP} when we compute $L\eta$ for $x\in\Omega$, and
\begin{align*} L\eta(x) &= \porn(\eta (x) - \eta (x+y)) d\nu(y) = \porn (\eta(x) - \eta(y))d\nu_x(y) \\
&\geq \int\limits_{B_R^c} d\nu_x(y) \geq \int\limits_{B_{2R}^c} d\nu(y) \geq C > 0.
\end{align*}
Function $w(x) = \frac 1C \eta (x)$ satisfies the desired conditions.\\
By Proposition \ref{smthint}, and Theorem \ref{strwk}, we know that the above barrier is also a weak solution with $f:= Lw\in L^2(\Omega)$.
\subsubsection{Barrier for compactly supported $\nu$}
Consider a sufficiently large $r_1$ so that $\nu(B_{r_1}^c) = 0$, and let $r_2  = \sup\{|x|: x \in \Omega\}$ . For $R = r_1 + r_2 + 1$ and $x\in\mR^n$, we set $\eta(x) = ((1- \frac {|x|^2}{R^2})\vee 0)$. Inside $B_R$, $\eta$ is smooth and strictly concave. In particular, for $x\in\Omega$ for every $\eps > 0$, there exists $\widetilde{C}>0$ such that if $\eps < |y| < r_1$, then $2\eta(x) - \eta(x+y) - \eta(x-y) \geq \widetilde{C}$. By the smoothness of $\eta$ in $B_R$, and the choice of $R$, $L\eta(x)$ is well defined, and $L\eta \in L^\infty (\Omega)$:
	\begin{equation*}
	\porn (2\eta(x) - \eta(x+y) - \eta(x-y)) d\nu(y) \leq C'\porn (1\wedge|y|^2)d\nu(y) < C''. 
	\end{equation*}
On the other hand, for every $x\in\Omega$, we have
	\begin{equation*}
	L\eta(x) = \porn (2\eta(x) - \eta(x+y) - \eta(x-y)) d\nu(y) \geq \int\limits_{B_{r_1}\backslash B_\eps} (2\eta(x) - \eta(x+y) - \eta(x-y)) d\nu(y) \geq \widetilde{C} \nu(B_{r_1}\backslash B_\eps).
	\end{equation*}
Hence, the function $w(x) = \frac 1{\widetilde{C}\nu(B_{r_1}\backslash B_\eps)} \eta(x)$ is our desired barrier. Note that $w\in \vkrn$. Indeed, $w$ is Lipschitz in $B_{R-1}$, hence we have
	\begin{equation*}
	\poom\porn (w(x) - w(x+y))^2 d\nu(y) dx \leq \overline{C}\poom \porn (1\wedge|y|^2) d\nu(y) dx < \infty.
	\end{equation*}
Furthermore, all calculations from the proof of Lemma \ref{strwk} are correct if we put $w$ instead of $u$. Hence $w$ is a weak solution with $f_w := Lw\in L^2(\Omega)$.

Note that the function $\eta$ from the unbounded case could fail when $supp (\nu) \subseteq B_R$: if $x\in\Omega$, $d(x,\Omega^c) > R$, then $L\eta(x) = 0$ because $\eta\equiv 1$ in $\Omega$. On the other hand, $\eta$ from bounded case is not concave on the whole of $\mR^n$, hence $2u(x) - u(x+y) - u(x-y) \geq \widetilde{C}$ might not hold for large $y$.

Now we will use the barriers to obtain $L^\infty$ bounds for solutions.
\begin{lem}\label{barr}
Let u be a solution to \eqref{WEAK}. Then there exists a constant $c$ independent of $f$ and $g$, such that
\begin{equation}\label{esti}
\|u\|_{\esom} \leq c\|f\|_{\esom} + \|g\|_{L^{\infty}(\mR^n \backslash \Omega)}.
\end{equation}
\end{lem}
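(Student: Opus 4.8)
The plan is to combine the barrier constructed above with the weak comparison principle (Corollary \ref{comp}). Write $M = \|g\|_{L^\infty(\mR^n\backslash\Omega)}$ and $N = \|f\|_{\esom}$, and let $w$ be the barrier: $w \in \vkrn$, $w \geq 0$, $w \leq C$ in $\Omega$, and --- crucially --- $w$ is itself a weak solution to \eqref{DP} with right-hand side $f_w = Lw \in L^2(\Omega)$ satisfying $f_w \geq 1$ a.e. in $\Omega$, as was observed at the end of both barrier constructions. Here $C$ depends only on $\nu$ and $\Omega$, not on $f$ or $g$.

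First I would check that the constant function $M$ is a weak solution to \eqref{DP}: it lies in $\vkrn$ (it is bounded on the bounded set $\Omega$ and its increments vanish), and $\langle M, \phi \rangle_\nu = 0$ for every $\phi \in \hkrn$, so it solves \eqref{WEAK} with right-hand side $0$ and exterior data $M$. By the bilinearity of $\langle \cdot, \cdot \rangle_\nu$, the function $\bar u := N w + M$ is then a weak solution with right-hand side $N f_w \in L^2(\Omega)$ and exterior data $(Nw + M)\restriction_{\Omega^c}$.

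Next I would verify that $\bar u$ dominates the data of $u$: since $f_w \geq 1$ a.e. in $\Omega$, we have $N f_w \geq N \geq f$ a.e. in $\Omega$; since $w \geq 0$ on $\Omega^c$ and $M \geq g$ a.e. on $\Omega^c$, we have $(Nw + M)\restriction_{\Omega^c} \geq g$ a.e. on $\Omega^c$. The weak comparison principle then gives $u \leq \bar u$ a.e., hence $u \leq NC + M$ a.e. in $\Omega$. Applying the same reasoning to $-u$, which is a weak solution with data $-f$, $-g$ and the same sup-norms, yields $-u \leq NC + M$ a.e. in $\Omega$. Together these give $\|u\|_{\esom} \leq C\|f\|_{\esom} + \|g\|_{L^\infty(\mR^n\backslash\Omega)}$, i.e. \eqref{esti} with $c = C$.

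The routine parts are the membership $M \in \vkrn$ and the linearity of \eqref{WEAK}; the only point that needs care is making sure the comparison principle genuinely applies to the pair $(u,\bar u)$ --- that both are weak solutions in the precise sense of the definition (in particular that the exterior data of $\bar u$ extends to a $\vkrn$ function, which is immediate since $\vkrn$ is a vector space), and that $N f_w$ and $0$ lie in $L^2(\Omega)$ --- together with keeping track that the barrier constant $C$ is independent of $f$ and $g$. I expect no serious obstacle: the substantive work was already carried out in constructing the barriers.
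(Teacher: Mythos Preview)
Your proposal is correct and follows essentially the same argument as the paper: define the comparison function $Nw+M$, check it is a weak solution with right-hand side $Nf_w\ge f$ and exterior data $\ge g$, and apply Corollary~\ref{comp}. The only cosmetic difference is that for the lower bound you apply the upper bound to $-u$, whereas the paper applies the comparison principle directly with $-v=-(Nw+M)$; these are equivalent.
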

\begin{proof}
We may assume that $f$ and $g$ are bounded. Define $v(x) = \|f\|_{\esom}\cdot w(x) + \|g\|_{L^{\infty}(\mR^n \backslash \Omega)}$, where $w$ is the appropriate barrier. Obviously, $v \geq u$ outside $\Omega$. We have $Lv(x) = \|f\|_{\esom}\cdot Lw(x) =: f_v(x)$ for $x\in\Omega$. Since $w$ is a weak solution, we get that $\langle v, \phi \rangle_\nu = (f_v,\phi)$ for every $\phi\in\hkrn$. Since $Lw \geq 1$, we have $f_v \geq f$. Therefore, by Corollary \ref{comp}, $v \geq u$. Since $w \leq C$ in $\mR^n$, we see that
$$u \leq C\|f\|_{\esom} + \|g\|_{L^{\infty}(\mR^n \backslash \Omega)}.$$
A similar argument using $-v$ shows that
$$ u \geq -(C\|f\|_{\esom} + \|g\|_{L^{\infty}(\mR^n \backslash \Omega)}).$$
This completes the proof.
\end{proof}
The method of barriers works just as well for the strong solutions, as long as they enjoy the comparison principle (cf. Theorem \ref{strmax}).
\begin{exa}
We will use the barrier to estimate the solution to the equation \eqref{exit}. We have
\begin{equation}\label{exbarr}
\begin{cases}
Ls = 1, & \hbox{in }\Omega,\\
s = 0, & \hbox{in }\Omega^c.
\end{cases}
\end{equation}
By Lemma \ref{barr}, for some $C>0$ and all $x\in\Omega$, we get $s(x) \leq C$. In particular the mean first exit time from a nonempty bounded open set for a jump L\'{e}vy process is finite if the intensity of jumps is positive. See \cite{Pruitt}, \cite{BGR}, and \cite{BJ} for the probabilistic approach.
\end{exa}

In the sequel, we construct more effective barriers for the unbounded case, in order to enhance the constant in \eqref{esti}.
\begin{thm}\label{effbarr} If $u$ is a solution for $\eqref{WEAK}$ with $\nu$ having unbounded support, then
$$u \leq C\|f\|_{\esom} + \|g\|_{L^{\infty} (\mR^n \backslash \Omega)}$$
and $C^{-1} = \lim\limits_{\varepsilon\to 0^+}\inf\limits_{x\in\Omega} \nu(\Omega_{\varepsilon} - x)$.
\end{thm}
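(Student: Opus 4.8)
The plan is to sharpen the barrier of Lemma~\ref{barr} by forcing the cut-off function to vanish already on $\Omega_\varepsilon$ rather than only far from $\Omega$, and then to optimize the resulting estimate as $\varepsilon\to 0^+$. Fix $\varepsilon>0$ and choose $\eta=\eta_\varepsilon\in C^\infty_c(\mR^n)$ with $0\le\eta\le 1$, $\eta\equiv 1$ on $\overline\Omega$, and $\eta\equiv 0$ on $\Omega_\varepsilon$; such $\eta$ exists because $\overline\Omega$ is compact and lies at distance at least $\varepsilon$ from $\Omega_\varepsilon$. For $x\in\Omega$ the integrand $\eta(x)-\eta(x+y)=1-\eta(x+y)$ is nonnegative, so by monotone convergence the principal value may be dropped; moreover, as a function of $y$ it is supported on a bounded set bounded away from the origin (it vanishes when $x+y\in\overline\Omega$, hence near $y=0$, and when $x+y\notin\operatorname{supp}\eta$), on which $\nu$ is finite by \eqref{Levy}. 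Hence $L\eta(x)$ exists and
\[
L\eta(x)=\porn\bigl(1-\eta(x+y)\bigr)\,d\nu(y)\ \ge\ \porn\textbf{1}_{\Omega_\varepsilon}(x+y)\,d\nu(y)=\nu(\Omega_\varepsilon-x)\ \ge\ m_\varepsilon:=\inf_{z\in\Omega}\nu(\Omega_\varepsilon-z)
\]
for every $x\in\Omega$. Next I would check $m_\varepsilon>0$: since $\Omega$ is bounded there is $R=R(\Omega,\varepsilon)$ with $x+y\in\Omega_\varepsilon$ whenever $x\in\Omega$ and $|y|\ge R$, so $\nu(\Omega_\varepsilon-x)\ge\nu(\{|y|\ge R\})$, and the latter is strictly positive because $\operatorname{supp}\nu$ is unbounded.

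Now set $w=w_\varepsilon:=m_\varepsilon^{-1}\eta$, so $0\le w\le m_\varepsilon^{-1}$, $w\ge 0$ outside $\Omega$, and $Lw\ge 1$ in $\Omega$. Since $\eta\in C^2_b(\mR^n)$, Proposition~\ref{smthint} gives $f_w:=Lw\in L^2(\Omega)$, Lemma~\ref{smthhkrn} gives $w\in\vkrn$, and Theorem~\ref{strwk} shows $w$ is a weak solution of \eqref{WEAK} with data $(f_w,w\restriction_{\Omega^c})$. We may assume $f,g$ are bounded, the estimate being trivial otherwise. Let $v=v_\varepsilon:=\|f\|_{\esom}\,w+\|g\|_{L^\infty(\mR^n\backslash\Omega)}$; a constant function lies in $\vkrn$ (as $\Omega$ is bounded) and pairs to $0$ against every $\phi\in\hkrn$ in $\langle\cdot,\cdot\rangle_\nu$, so $v\in\vkrn$ is again a weak solution of \eqref{WEAK}, with right-hand side $\|f\|_{\esom}f_w\ge\|f\|_{\esom}\ge f$ a.e. in $\Omega$ and exterior datum $v\restriction_{\Omega^c}\ge\|g\|_{L^\infty(\mR^n\backslash\Omega)}\ge g$ a.e. in $\Omega^c$. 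By the weak comparison principle (Corollary~\ref{comp}), $u\le v$ a.e., whence $u\le m_\varepsilon^{-1}\|f\|_{\esom}+\|g\|_{L^\infty(\mR^n\backslash\Omega)}$ a.e.

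It remains to let $\varepsilon\to 0^+$. As $\varepsilon$ decreases the sets $\Omega_\varepsilon$ increase, so $\varepsilon\mapsto m_\varepsilon$ is non-increasing; and for any fixed $x_0\in\Omega$ we have $m_\varepsilon\le\nu(\Omega_\varepsilon-x_0)\le\nu\bigl((\overline\Omega)^c-x_0\bigr)<\infty$, since $(\overline\Omega)^c-x_0$ is bounded away from $0$. Hence $\lim_{\varepsilon\to 0^+}m_\varepsilon$ exists, lies in $(0,\infty)$, and by definition equals $C^{-1}$. Taking $\varepsilon=1/k\to 0$ in the bound of the previous paragraph yields $u\le C\|f\|_{\esom}+\|g\|_{L^\infty(\mR^n\backslash\Omega)}$ a.e., as claimed.

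The step I expect to demand the most care is the pointwise inequality $L\eta(x)\ge\nu(\Omega_\varepsilon-x)$ together with $m_\varepsilon>0$: this is precisely where the unboundedness of $\operatorname{supp}\nu$ is used, and it has to be carried out hand in hand with checking convergence of the integral defining $L\eta(x)$ so that the pointwise estimate is legitimate. A minor technical point is that the comparison function $v$ contains an additive constant, which is not in $L^2(\mR^n)$ yet is still admissible in $\vkrn$ and invisible to the bilinear form, so that Corollary~\ref{comp} applies without modification.
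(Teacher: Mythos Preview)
Your proof is correct and follows essentially the same approach as the paper: build a cut-off $\eta_\varepsilon$ equal to $1$ on $\Omega$ and vanishing outside an $\varepsilon$-neighborhood, bound $L\eta_\varepsilon$ below by $\inf_{x\in\Omega}\nu(\Omega_\varepsilon^c-x)$, scale to a barrier, apply the comparison principle, and let $\varepsilon\to 0^+$; you supply more detail than the paper (positivity and finiteness of $m_\varepsilon$, admissibility of the additive constant in $\vkrn$). One small slip: the function $y\mapsto 1-\eta(x+y)$ is \emph{not} supported on a bounded set (it equals $1$ for large $|y|$), but the integral still converges since the integrand vanishes near $y=0$ and is bounded by $1$, so $L\eta(x)\le\nu(\{|y|>\delta\})<\infty$ for suitable $\delta>0$.
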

\begin{proof}
Let $\varepsilon > 0$ and $\Omega_{\varepsilon} = \{x\in\Omega: \dist(x,\Omega) < \varepsilon \}$. Let us consider $\eta_{\varepsilon}\in C_c^{\infty}(\Omega_{\varepsilon})$ such that $0 \leq \eta_{\varepsilon} \leq 1$, and $\eta_{\varepsilon} = 1$ in $\Omega$. For $x\in\Omega$ we have
$$L\eta_{\varepsilon}(x) = \porn (\eta_{\eps}(x) - \eta_{\eps}(y)) d\nu_x(y) \geq \int\limits_{\Omega_{\varepsilon}^c} d\nu_x(y) =: \kappa^{\Omega_\varepsilon}(x).$$
In particular, for every $x\in\Omega$, we get $L\eta_{\varepsilon}(x) \geq \inf\limits_{x\in \Omega} \kappa^{\Omega_\varepsilon}(x) =: C_{\varepsilon}^{-1}$, thanks to which we obtain a barrier $w_{\varepsilon}$ with $Lw_{\varepsilon} \geq 1$, $0 \leq w_{\varepsilon} \leq C_{\varepsilon}$. Repeating the proof of Lemma \ref{barr} yields $u \leq C_{\varepsilon}\|f\|_{\esom} + \|g\|_{L^{\infty} (\mR^n \backslash \Omega)}$ for every $\varepsilon > 0$. Since $C_\varepsilon$ is increasing and bounded from above by $\inf\limits_{x\in\Omega} \nu(\Omega - x)$, we obtain
$$u \leq \lim\limits_{\varepsilon \to 0^+}C_{\varepsilon}\|f\|_{\esom} + \|g\|_{L^{\infty} (\mR^n \backslash \Omega)}.$$
\end{proof}

\noindent In \cite{BJ}, Bogdan and Jakubowski give a slightly better estimate 
\begin{equation}\label{BJ}
C^{-1} =\inf\limits_{x\in\Omega} \kappa^{\Omega}(x)
\end{equation}
under different assumptions. The following example shows that the expression above is not always equal to our estimate.
\begin{exa}
Let $n=1$ and $\nu = \sum\limits_{k\in \mathbb{Z}\backslash \{0\}}  \delta_k \frac 1{k^2}$, where $\delta_x$ is the Dirac delta at $x$. If $\Omega = (0,1)$, then $\Omega_{\varepsilon} = (-\varepsilon, 1+\varepsilon)$ for $\varepsilon > 0$. We have $\inf\limits_{x\in\Omega} \nu(\Omega^c - x) = \frac {\pi^2}3$, however $\inf\limits_{x\in\Omega} \nu(\Omega_\varepsilon^c - x) = \frac {\pi^2}3 - 1$,  for every $\varepsilon > 0$, since we can take $x$ close to $0$ so that $1 \in \Omega_\varepsilon - x$. Thus $\lim\limits_{\varepsilon \to 0^+}\inf\limits_{x\in\Omega} \nu(\Omega^c_\varepsilon - x) < \inf\limits_{x\in\Omega} \nu(\Omega^c - x)$. One may easily check that in this setting, the solution to \eqref{exbarr} is $s(x) = \frac 3{\pi^2} \textbf{1}_{(0,1)} (x)$ hence the estimate \eqref{BJ} holds.
\end{exa}
\section{The extension operator}\label{sec:ext}
\subsection{Reflection in $C^{1,1}$ domains}
The construction of the extension consists of two main issues: choosing the method of the extension, and setting appropriate assumptions on the initial function. One may be tempted to extend the function simply by setting $u = 0$ in $\Omega$. The following example shows that a regular function, after being extended by $0$, may lose its good properties.
\begin{exa}
	Consider a one-dimensional L\'{e}vy measure $d\nu(x) = \frac 1{x^2} dx$ and a function $u = \frac 1x$ defined on $(-1,1)^c$. By the symmetry, we only need to perform the calculations on the positive half-line. We have
	\begin{align*}
	\int\limits_1^\infty\int\limits_1^\infty \left(\frac 1x - \frac 1y\right)^2 \frac 1{(x-y)^2} dx dy = \int\limits_1^\infty \int\limits_1^\infty \frac {(x-y)^2}{(xy)^2} \frac 1{(x-y)^2} dx dy = \int\limits_1^\infty \int\limits_1^\infty \frac 1 {(xy)^2} dx dy < \infty.
	\end{align*}
	If $\wtu$ is the function $u$ extended by $0$ to the whole of $\mR$, then
	\begin{align*}
	\int\limits_0^\infty \int\limits_0^\infty (\wtu(x) - \wtu(y))^2\frac 1{(x-y)^2} dx dy = &\int\limits_1^\infty \int\limits_1^\infty \left(\frac 1x - \frac 1y\right)^2 \frac 1 {(x-y)^2} dx dy\\ &+ 2 \int\limits_0^1\int\limits_1^\infty \frac 1{y^2} \frac 1{(x-y)^2} dy dx.
	\end{align*}
	Unfortunately, the second summand is infinite:
	\begin{align*}
	&\int\limits_1^\infty \frac 1{y^2} \int\limits_0^1 \frac 1 {(x-y)^2} dx dy = \int\limits_1^\infty \frac 1{y^2}\left(\frac 1{y-1} - \frac 1y\right)dy = \int\limits_1^\infty \frac 1 {y^3(y-1)} dy = \infty.
	\end{align*}
\end{exa}
This case shows, that the extension should be constructed in a more subtle way. Our method  -  the reflection can be used to obtain the extensions on $C^{1,1}$ domains, which are defined as follows
\begin{defi}\label{def1} An open, bounded, and connected $\Omega\subseteq \mR^n$ is a $C^{1,1}$ domain at scale $r>0$, if and only if it satisfies the interior and exterior ball conditions at some scale $r > 0$, i.e. for every $\wtx\in \partial \Omega$ there exist $x'\in\Omega^c$ and $x\in\Omega$, such that $B(x,r)\subseteq \Omega$, $B(x',r)\cap\Omega = \emptyset$, and $\overline{B(x,r)}\cap\overline{B(x',r)} = \{\wtx\}$.
\end{defi}
In \cite{Aikawa}, Aikawa et al. show that $C^{1,1}$ domains can be characterized as domains with the boundary that locally resembles the image of a $C^{1,1}$ function. To be precise, let $S_{\wtx}\partial\Omega$ be the plane tangent to $\Omega$ at $\wtx\in\partial\Omega$ and let $\vec{n_{\wtx}}$ be the normal vector at $\wtx$ (of whichever orientation).
\begin{thm}\label{def2}
	A domain $\Omega\subseteq\mR^n$ is $C^{1,1}$ at some scale $r>0$, if and only if there exist $\delta>0$, and $\lambda \geq 0$, such that for every $\wtx\in\partial\Omega$, $S_{\wtx}\partial\Omega$ exists, and there is a function $\Phi_{\wtx}:S_{\wtx}\partial\Omega \longrightarrow \mR^n$, given by the formula  $\Phi(x) = x + \phi(x)\vec{n_{\wtx}}$, such that $\phi: S_{\wtx}\partial\Omega \longrightarrow \mR$ is a $C^1$ function, and
	\begin{itemize}
		\item $|\nabla\phi(x) - \nabla\phi(y)| \leq \lambda |x-y|$ for every $x,y\in S_{\wtx}\partial\Omega$,
		\item $\Phi[B(\wtx,\delta)\cap S_{\wtx}\partial\Omega]\subseteq \partial\Omega$,
		\item $B(\wtx,\delta)\cap\partial\Omega \subseteq \Phi[B(\wtx,\delta)\cap S_{\wtx}\partial\Omega]$.
	\end{itemize}
\end{thm}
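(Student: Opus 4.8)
The plan is to prove each implication by elementary geometry, the engine in both directions being a single two-sided quadratic estimate on $\partial\Omega$. \textbf{From the ball conditions to the graph.} Fix $\wtx\in\partial\Omega$ with interior and exterior tangent balls $B(x,r)\subseteq\Omega$, $B(x',r)\cap\Omega=\emptyset$, $\overline{B(x,r)}\cap\overline{B(x',r)}=\{\wtx\}$. Two closed balls of equal radius meeting in a single point must have $|x-x'|=2r$ with that point as their common midpoint, so $x,\wtx,x'$ are colinear and $\vec n_{\wtx}:=(x-\wtx)/r$ is a well-defined unit vector; set $S_{\wtx}\partial\Omega:=\wtx+\vec n_{\wtx}^{\perp}$. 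For any $\wty\in\partial\Omega$ we have $|\wty-x|\ge r$ and $|\wty-x'|\ge r$; writing $x=\wtx+r\vec n_{\wtx}$ and $x'=\wtx-r\vec n_{\wtx}$ and expanding the squares yields the key bound $|\langle \wty-\wtx,\vec n_{\wtx}\rangle|\le\frac1{2r}|\wty-\wtx|^2$. This already shows that $S_{\wtx}\partial\Omega$ is the genuine tangent plane at $\wtx$. Applying the bound at $\wtx$ and at $\wty$ and combining gives, for $|\wtx-\wty|<r$, an estimate $|\vec n_{\wtx}-\vec n_{\wty}|\le (C_n/r)|\wtx-\wty|$, i.e. the Gauss map is Lipschitz with constant $\sim 1/r$.

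\textbf{The graph.} Fix $\wtx$ and use coordinates with $\wtx=0$, $\vec n_{\wtx}=e_n$, $S_{\wtx}\partial\Omega=\{x_n=0\}$, so that the interior and exterior balls are $B(re_n,r)$ and $B(-re_n,r)$. The two-sided bound confines $\partial\Omega$ near $0$ to the thin set $\{|x_n|\le C|x'|^2/r\}$. For small $|x'|$ the point $(x',h)$ lies in $B(re_n,r)\subseteq\Omega$ and $(x',-h)$ lies in $\mathrm{int}(\Omega^c)$ as soon as $|x'|^2<h(2r-h)$, so the vertical segment between them meets $\partial\Omega$; the two-sided bound (with $\vec n$ uniformly close to $e_n$) forbids two such meetings, hence over a disk $\{|x'|<\delta\}$ with $\delta\sim r$ we obtain $\partial\Omega=\{x_n=\phi(x')\}$ for a single-valued $\phi$ with $|\phi(x')|\le C|x'|^2/r$. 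Applying the two-sided bound at $(y',\phi(y'))$ shows that $\partial\Omega$ is squeezed near that point between two balls of radius $r$ tangent to the affine plane $(y',\phi(y'))+\vec n_{(y',\phi(y'))}^{\perp}$, which forces $\phi$ to be differentiable at $y'$ with $\nabla\phi(y')$ equal to the Euclidean slope of that plane; since $y'\mapsto(y',\phi(y'))$ is Lipschitz (the normals stay uniformly close to $e_n$, bounding $|\nabla\phi|$) and $\vec n$ is Lipschitz on $\partial\Omega$ by the first step, $\nabla\phi$ is Lipschitz with a constant $\lambda\sim 1/r$. Then $\Phi(x)=x+\phi(x)\vec n_{\wtx}$ has the three stated properties, with $\delta$ and $\lambda$ uniform in $\wtx$.

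\textbf{From the graph to the ball conditions.} Fix $\wtx$ and use the coordinates above, so $\phi(0)=0$, $\nabla\phi(0)=0$, and $|\nabla\phi(x')-\nabla\phi(y')|\le\lambda|x'-y'|$, hence $|\phi(x')|\le\frac\lambda2|x'|^2$ on $\{|x'|<\delta\}$. Choose $\rho<\min(\delta/2,1/\lambda)$. The ball $B(\rho e_n,\rho)$ is tangent to $\{x_n=0\}$ at $0$, lies in $B(0,\delta)$, and its lower cap satisfies $\rho-\sqrt{\rho^2-|x'|^2}\ge\frac{|x'|^2}{2\rho}>\frac\lambda2|x'|^2\ge\phi(x')$ for $x'\neq0$, so the open ball avoids $\partial\Omega\cap B(0,\delta)$ and therefore lies in one of the two components into which the graph splits $B(0,\delta)$. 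Orienting $\vec n_{\wtx}$ to point into $\Omega$, that component is $\Omega$, so $B(\rho e_n,\rho)\subseteq\Omega$; symmetrically $B(-\rho e_n,\rho)\cap\Omega=\emptyset$. The two centers are at distance $2\rho$, equal to the sum of the radii, so the closed balls meet only at $\wtx$. Thus $\Omega$ satisfies the interior and exterior ball conditions at scale $\rho$.

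\textbf{Main obstacle.} Everything delicate sits in the forward direction. Turning the pointwise quadratic bound into the Lipschitz estimate for the Gauss map requires bookkeeping of the dimensional constant and of the range $|\wtx-\wty|<r$. The genuinely non-routine part is the passage to a global single-valued graph over a disk of controlled radius --- exactly where the exterior ball condition is essential --- together with the upgrade ``squeezed between two tangent balls of radius $r$ at every boundary point'' $\Rightarrow$ ``$\phi\in C^{1,1}$ with $\|\nabla\phi\|_{\mathrm{Lip}}\le C/r$''. Isolating and proving that last lemma is, I expect, the crux of the whole argument.
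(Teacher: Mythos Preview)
The paper does not prove this theorem at all: it is quoted from the literature, with the sentence ``In \cite{Aikawa}, Aikawa et al.\ show that $C^{1,1}$ domains can be characterized as domains with the boundary that locally resembles the image of a $C^{1,1}$ function'' immediately preceding the statement. There is therefore nothing in the paper to compare your argument against.

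That said, your outline is the standard route and is essentially correct. The two-sided quadratic bound $|\langle \wty-\wtx,\vec n_{\wtx}\rangle|\le \frac{1}{2r}|\wty-\wtx|^2$ is the right engine, and both directions go through as you describe. A couple of places where you are a bit loose: the Lipschitz estimate for the Gauss map does not fall out of ``combining'' the bound at $\wtx$ and at $\wty$ quite as immediately as you suggest---one needs to decompose $\vec n_{\wtx}-\vec n_{\wty}$ into tangential and normal components relative to one of the planes and control each; and in the backward direction your choice $\rho<1/\lambda$ is not quite enough to guarantee $B(\rho e_n,\rho)\subseteq\Omega$, since the ball could escape the chart $B(0,\delta)$ and hit $\partial\Omega$ outside the graph region---you also need $\rho<\delta/2$, which you wrote, but the argument that the ball stays on one side of the graph \emph{inside} $B(0,\delta)$ should be paired with the observation that $B(\rho e_n,\rho)\subseteq B(0,2\rho)\subseteq B(0,\delta)$. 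Your own diagnosis of the crux (the passage from ``squeezed between tangent balls at every point'' to ``$\nabla\phi$ Lipschitz'') is accurate; that step is exactly what the Aikawa--Kilpel\"ainen--Shanmugalingam--Zhong argument isolates.
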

Note that in this setting $\phi(\wtx) = 0$ and  $\phi'(\wtx) = \textbf{0}$.

Later on, we will use $x$ ($x'$) to denote the center of an arbitrary interior (exterior) ball tangent to $\partial\Omega$ at $\wtx$. According to Definition \ref{def1}, consider a $C^{1,1}$ domain at scale $2r>0$. It is obvious that if $0<s<2r$, then $\Omega$ is also a $C^{1,1}$ domain at scale $s$. Note, that by taking exterior and interior balls of radius smaller than $2r$, we avoid the situation when one interior (or exterior) ball touches the boundary in more than one point. We also know that the center of the tangent ball lies on the line normal to $\Omega$ at $\wtx$. Thus, for every fixed $s\in(0,2r)$, we obtain a bijective correspondence between the center of the interior ball of radius $s$ and the point on the boundary that this ball is tangent to. We call that mapping $\psi_s: \partial\Omega \longrightarrow \Omega$. We also get a similar bijection for the center of the exterior ball: $\chi_s:\partial\Omega \longrightarrow \Omega^c$. The composition of these mappings is our desired reflection. 
\begin{defi}
	Let $\Omega$ be a $C^{1,1}$ domain, with constants $\lambda$, $\delta$ as in Theorem \ref{def2}, and $r$ according to Definition \ref{def1}. Let $V = \{x\in\mR^n: \dist(x,\partial\Omega) < \eps = r\wedge \frac 1{6\lambda}\wedge \frac {\delta}3 \}$. We define the reflection operator $T:V\longrightarrow V$ by the formulae $Tx = \chi_{d(x)} \circ \psi_{d(x)}^{-1} (x)$, for $x\in \Omega\cap V$, where $d(x) = \dist(x,\Omega^c)$, $Tx' = \psi_{d(x)}\circ \chi_{d(x)}^{-1} (x')$ for $x' \in Int(\Omega^c) \cap V$, and $T\wtx = \wtx$ for $\wtx\in\partial\Omega$.
\end{defi}
 From the construction we immediately get $T = T^{-1}$. The reasons for the choice of $\eps$ will be explained in the proof of Lemma \ref{dist}. This transformation, in general, does not preserve the distances between points, however we will prove that $|x-y| \approx |Tx - Ty|$ in $V$. In Figure \ref{pictr}, $x' = Tx$, $y' = Ty$.
\begin{figure}\label{pictr}
\begin{tikzpicture}[scale = 0.9]
\filldraw (0,0)  node[left] {$\wtx$} circle (1pt)
(8,5) node[above] {$\wty$} circle (1pt)
(-1.5,1.25) circle (1pt)
(1.5,-1.25) circle (1pt)
(7.75,6) circle (1pt)
(8.25,4) circle (1pt);

\draw[red, thick] (0,0) .. controls (2.5,3) and (4,4) .. (8,5);
\draw (-1.5,1.25) node[left] {$x'$} circle (1.95cm);
\draw (1.5,-1.25) node[right] {$x$}circle (1.95cm);
\draw (7.75,6) node[above right] {$y'$} circle (1.03cm);
\draw (8.25,4) node[below right] {$y$} circle (1.03cm);
\draw (3,5.5) node[above left] {\Large{$\Omega^c$}};
\draw (4,3.5) node[above left] {\Large{\color{red}{$\partial\Omega$}}};
\draw (5, 1.5) node[above left] {\Large{$\Omega$}};
\end{tikzpicture}
\caption{}
\end{figure}
\begin{lem}\label{dist} 
	There exists a constant $C\geq 1$, such that $|x-y| \leq C|Tx-Ty|$ holds for every $x,y\in V$. As a consequence $\frac 1C |x-y| \leq |Tx - Ty| \leq C|TTx - TTy| = C|x-y|$.
\end{lem}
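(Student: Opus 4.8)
The plan is to show that $T$ itself is Lipschitz on $V$; the two inequalities in the statement then follow automatically because $T=T^{-1}$ maps $V$ into $V$: if $|Tx-Ty|\le C|x-y|$ for all $x,y\in V$, then applying this to the pair $Tx,Ty\in V$ gives both $|x-y|=|T(Tx)-T(Ty)|\le C|Tx-Ty|$ and $|Tx-Ty|\le C|TTx-TTy|=C|x-y|$, which is exactly the asserted chain.

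To prove the Lipschitz bound I would first unwind the definition of $T$. For $z\in V$ let $\xi_z\in\partial\Omega$ be the boundary point at which the tangent ball defining $Tz$ touches $\partial\Omega$, i.e. $\xi_z=\psi_{d(z)}^{-1}(z)$ for $z\in\Omega\cap V$, $\xi_z=\chi_{d(z)}^{-1}(z)$ for $z\in\operatorname{Int}(\Omega^c)\cap V$, and $\xi_z=z$ for $z\in\partial\Omega$. Since $\psi_s(\tilde z)=\tilde z+s\,\vec n(\tilde z)$ and $\chi_s(\tilde z)=\tilde z-s\,\vec n(\tilde z)$, where $\vec n$ denotes the inward unit normal, one has $z=\xi_z+\sigma_z\,\vec n(\xi_z)$ with $\sigma_z$ the signed distance from $z$ to $\partial\Omega$, so $|\sigma_z|<\varepsilon$; moreover $\xi_z$ is the nearest point of $\partial\Omega$ to $z$, which is well defined on all of $V$ precisely because $\varepsilon\le r$ is below the radius of the tangent balls. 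As the exterior and interior balls at $\xi_z$ are reflections of one another through $\xi_z$, this yields the clean identity
\[
Tz=2\xi_z-z,\qquad z\in V.
\]

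Next I would show that $z\mapsto\xi_z$ is $2$-Lipschitz on $V$. The geometric input is the bound
\[
\bigl|\vec n(\tilde x)\cdot(\tilde x-\tilde y)\bigr|\le\frac{|\tilde x-\tilde y|^2}{4r}\qquad\text{for all }\tilde x,\tilde y\in\partial\Omega,
\]
which comes from the interior and exterior ball conditions at scale $2r$: $\tilde y$ lies outside both open balls $B(\tilde x\pm2r\,\vec n(\tilde x),2r)$ (the first contained in $\Omega$, the second disjoint from $\overline\Omega$), so expanding $|\tilde y-\tilde x\mp2r\,\vec n(\tilde x)|^2\ge 4r^2$ gives the two-sided estimate. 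Writing $x=\xi_x+\sigma_x\,\vec n(\xi_x)$, $y=\xi_y+\sigma_y\,\vec n(\xi_y)$ and pairing $x-y$ with $\xi_x-\xi_y$,
\[
(x-y)\cdot(\xi_x-\xi_y)=|\xi_x-\xi_y|^2+\sigma_x\,\vec n(\xi_x)\cdot(\xi_x-\xi_y)-\sigma_y\,\vec n(\xi_y)\cdot(\xi_x-\xi_y)\ge\bigl(1-\tfrac{\varepsilon}{2r}\bigr)|\xi_x-\xi_y|^2\ge\tfrac12|\xi_x-\xi_y|^2,
\]
using $|\sigma_x|,|\sigma_y|\le\varepsilon\le r$. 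Cauchy--Schwarz gives $|\xi_x-\xi_y|\le2|x-y|$, and combining with the identity for $T$,
\[
|Tx-Ty|=|2(\xi_x-\xi_y)-(x-y)|\le 2|\xi_x-\xi_y|+|x-y|\le 5|x-y|,
\]
so $T$ is $5$-Lipschitz on $V$ and the lemma holds with $C=5$.

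I expect the main obstacle to be the first step — extracting from the definitions of $\psi_s$ and $\chi_s$ the identity $Tz=2\xi_z-z$ together with the facts that $z-\xi_z$ is normal to $\partial\Omega$ at $\xi_z$ and that $\xi_z$ is well defined for every $z\in V$ (i.e. that $V$ lies within the reach of $\partial\Omega$), which is exactly why $\varepsilon$ was chosen below the ball radius. Once that bookkeeping is done the estimate is short, and, notably, it does not use the $C^{1,1}$ graph parametrization of Theorem \ref{def2}, only the interior and exterior ball conditions.
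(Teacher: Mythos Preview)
Your argument is correct and considerably cleaner than the paper's. The paper works in local $C^{1,1}$ graph coordinates from Theorem~\ref{def2}: it splits into cases according to whether $x,y$ lie in $\Omega$, $\Omega^c$, or $\partial\Omega$, and within each case projects onto the tangent plane at $\wtx$, builds a planar trapezoid with vertices $x,x',z,z'$ (where $z,z'$ are projections of $y,y'$ onto the normal line through $\wty$), and uses the Lipschitz bound on $\nabla\phi$ to control the tilt and the length ratios of the diagonals; this is where the choices $\eps\le\frac{1}{6\lambda}$ and $\eps\le\frac{\delta}{3}$ enter. You bypass all of this by recognising the closed form $Tz=2\xi_z-z$ and reducing the Lipschitz property of $T$ to that of the metric projection $z\mapsto\xi_z$, which you obtain from the two-sided ball condition alone via the inner-product estimate $(x-y)\cdot(\xi_x-\xi_y)\ge\tfrac12|\xi_x-\xi_y|^2$. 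The payoff is an explicit constant $C=5$, a case-free proof, and the observation that among the three constraints defining $\eps$ only $\eps\le r$ is actually needed for this lemma; the graph parametrisation of Theorem~\ref{def2} is never invoked.
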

\begin{proof}
	By writing $AB$ we mean the line segment with endpoints $A$ and $B$, $\Delta ABC$ is the triangle with vertices $A,B,C$. We will also use the same notations as before: $x\in\Omega$, $x' = Tx \in \Omega^c$ and $\wtx\in\partial\Omega$ is the midpoint of $x'x$. Let $U_x = \Phi[S_{\wtx}\partial\Omega\cap B(\wtx,\delta)].$
	It is enough to consider three cases: first - when both points are from $\Omega$, second - when one of them is in $\Omega$, and the other is in $\Omega^c$, and third - when one of the points is on the boundary.
	\paragraph{Case 1. $x,y \in\Omega$}
	\paragraph{Case 1.1. $\wty \in U_x$.}	
	\noindent We will assume that $\dist(y',\Omega) \leq \dist(x',\Omega)$.\\
	Let $z$ and $z'$ be the orthogonal projections of respectively $y$ and $y'$, on the unique line parallel to $x\wtx$ that goes through $\wty$. Furthermore, let $B'$ be the projection of $\wtx$ onto $zz''$ and $B$ - the projection of $\wty$ onto $xx'$, hence both $\Delta\wtx B \wty$ and $\Delta\wtx B'\wty$ are right triangles. See the illustration in Figure \ref{fig1}.
Note that the segment $y'y$ does not necessarily belong to the plane generated by the segments $x\wtx$ and $z\wty$. Our primary goal is to show that $|xy| \approx |x'y'|$. In order to do that we will first prove that $|xz| \approx |x'z'|$. By the Lagrange's mean value theorem $\frac {|B'\wty|}{|B'\wtx|} = |\phi'(\xi)|$ for some $\xi\in{\wtx B'}$. By the Lipschitz condition for the derivative, we get
\begin{equation}\label{Bysmall}
\frac {|B\wtx|}{|B\wty|} = \frac{|B'\wty|}{|B'\wtx|} = |\phi'(\xi)| \leq \lambda|B'\wtx|, 
\end{equation}
hence $|B\wtx| \leq \lambda|B\wty|^2$. 
	\begin{figure}
		\begin{tikzpicture}[scale = 0.8]
		\filldraw 
		(0,0) circle (1pt)
		(0,5) circle (1pt)
		(0,-5) circle (1pt)
		(8,-1) circle (1pt)
		(8,2) circle (1pt)
		(8,-4) circle (1pt)
		(9,2) circle (1pt)
		(7,-4) circle (1pt)
		(8,0) node[above left] {B'} circle(1pt)
		(0,-1) node[left] {B} circle(1pt);
		\draw (0,5)  node[left] {$x'$} -- (0,0) node[left] {$\wtx$} -- (0,-5) node[left] {$x$};
		\draw (0,0) -- (8,-1) node[above left] {$\wty$} -- (10,-1.25) ; 
		\draw (8,2)  node[above right] {$z'$}-- (8,-4) node[right] {$z$};
		\draw (9,2)  node[right] {$y'$} -- (7,-4) node[left] {$y$};
		\draw[dashed] (9,2) -- (8,2);
		\draw[dashed] (7,-4) -- (8,-4);
		\draw[dashed] (0,0) -- (8,0);
		\draw[dashed] (0,-1) -- (8,-1);
		\end{tikzpicture}
		\label{fig1}\caption{Projection of $y'$ and $y$. $\angle (\wtx B \wty) = \angle (\wtx B' \wty) = \frac \pi 2$.}
	\end{figure} 
Assume, without loss of generality, that $|x'z'| \geq |xz|$. \\
	\begin{figure}
		\begin{tikzpicture}[scale=1.3]
		\filldraw (0,0) node[below] {$x'$} circle (1pt)
		(3,0) node[below] {$A$} circle (1pt)
		(4,0) node[below] {$\wtx$} circle (1pt)
		(5,0) node[below] {$B$} circle (1pt)
		(8,0) node[below] {$x$} circle (1pt)
		(3,4) node[above] {$z'$} circle (1pt)
		(5,4) node[above] {$\wty$} circle (1pt)
		(7,4) node[above] {$z$} circle (1pt);
		\draw (0,0) -- (8,0) -- (7,4) -- (3,4) -- (0,0);
		\draw (4,0) -- (5,4);
		\draw[dashed] (5,0) -- (5,4);
		\end{tikzpicture}
		\label{fig2}\caption{Projection of Figure \ref{fig1} on the plane. Here $Az'$ and $B\wty$ are the heights of the trapezoid.}
	\end{figure}
	The shape of the trapezoid may depend on positions of $\wtx$ and $\wty$, however the following arguments (especially, the formula for $|xz|$) are independent of this shape. Let $c = |x'x|-|z'z|$, $h = |B\wty|$, and $t = |x'A|$. Now $\frac{|xz|^2}{|x'z'|^2}$ can be represented as a function of $t$:
	\begin{equation}\label{repr}
	\frac {|xz|^2}{|x'z'|^2} = \frac {(t-c)^2 + h^2} {t^2 + h^2} = 1  - \frac{c(2t - c)}{t^2 + h^2}.
	\end{equation}
	Note that $|t - \frac c2| = |B\wtx|$. The assumption $|xz| \leq |x'z'|$ yields $2t - c \geq 0$. Hence, from \eqref{Bysmall}, we get $2t - c \leq 2\lambda h^2$. Therefore,
	\begin{equation}\label{rzuty}
	\frac {|xz|^2}{|x'z'|^2} \geq 1 - \frac {2c\lambda h^2}{t^2 + h^2} \geq 1 - 2c\lambda \geq \frac 12.
	\end{equation}
	Here we used that $c \leq |xx'|$, and $\eps \leq \frac 1 {6\lambda}$. Thus we have obtained that for some $D>0$, $|xz| \leq D|x'z'|$.\\
	Now we proceed to estimate $|xy|$.
	\begin{equation}\label{triangle}
	|xy| \leq |xz| + |zy| \leq D|x'z'| + |zy| \leq D|x'y'| + D|y'z'| + |zy| = D|x'y'| + (D+1)|z'y'|.
	\end{equation}
	We claim that $|z'y'| \leq a|x'y'|$ for some $a>0$ which does not depend on $x,y$. By the Lipschitz condition for $\phi'$, we get
	\begin{equation}\label{tan}
	\frac{|z'y'|}{|z'\wty|} = \tan (\angle y'\wty z') = |\phi'(\wtx) - \phi'(B')| \leq \lambda|B'\wtx| \leq \lambda|x'z'|.
	\end{equation}
	Therefore
	\begin{equation}\label{zyxz}
	|z'y'| \leq \lambda|x'z'||z'\wty| \leq \lambda |x'z'|d(y,\Omega) \leq \lambda\eps |x'z'|.
	\end{equation}
	Note that the we have made the assumption $\eps \leq \frac 1{2\lambda}$ in the definition of $V$. Hence
	\begin{equation}\label{lameps}
	|z'y'| \leq \frac 12 |x'z'|.
	\end{equation}
	By \eqref{lameps}, the triangle inequality, and \eqref{zyxz} we get the claim
	\begin{equation}\label{claim}
	|x'y'| \geq |x'z'| - |z'y'| \geq |x'z'| - \frac 12|x'z'| = \frac 12 |x'z'| \geq |z'y'|. 
	\end{equation}
	By applying \eqref{claim} to \eqref{triangle} we obtain
	\begin{equation*}
	|xy| \leq D|x'y'| + (D+1)|x'y'| = (2D+1)|x'y'|.
	\end{equation*}
	Thanks to $|xz| \approx |x'z'|$, the reverse estimate is obtained similarly, by interchanging $|xy|$ and $|x'y'|$ in \eqref{triangle}. Thus, Case 1.1. is proved.
	\paragraph{Case 1.2. $\wty \notin U_x$.} In that situation $|\wty\wtx| \geq \delta$. By the definition of $V$, we have $\eps < \frac{\delta}{3}$ and as a consequence $|xx'|,|yy'|\leq \frac{\delta}3$. Hence, $|x'y'| \geq |\wtx\wty| - |\wtx x'| - |\wty y'| \geq \delta - \frac \delta 3 - \frac \delta 3 = \frac \delta 3$. Analogously $|xy| \geq \frac \delta 3$. Since $|xy|$ and $|x'y'|$ are also bounded from above, we get $|xy| \approx |x'y'|.$ In the remaining cases we will not discuss the situation when $\wtx$ and $\wty$ are far from each other - they can be resolved in exactly the same way.
	\paragraph{Case 2. $x\in\Omega$, $y'\in\Omega^c$.}
	Once again we first project the situation on a plane with assumption that $|zz'| \leq |xx'|$ and $|x'z| \geq |xz'|$.
	\begin{figure}[H]
		\begin{tikzpicture}[scale=1.2]
		\filldraw (0,0) node[below] {$x'$} circle (1pt)
		(3,0) node[below] {$A$} circle (1pt)
		(4,0) node[below] {$\wtx$} circle (1pt)
		(5,0) node[below] {$B$} circle (1pt)
		(8,0) node[below] {$x$} circle (1pt)
		(3,4) node[above] {$z'$} circle (1pt)
		(5,4) node[above] {$\wty$} circle (1pt)
		(7,4) node[above] {$z$} circle (1pt);
		\draw (0,0) -- (8,0) -- (7,4) -- (3,4) -- (0,0);
		\draw (4,0) -- (5,4);
		\draw[dashed] (5,0) -- (5,4);
		\draw[red] (0,0) -- (7,4);
		\draw[red] (8,0) -- (3,4);
		\end{tikzpicture}
		\label{fig3}\caption{Illustration of the second case}
	\end{figure}
	We claim that $|x'z| \leq C|xz'|$ for some $C>0$ independent of $x,z$. Let $t,h,c$ be the same as before, and let $a = |z'z|$, $b = |x'x|$. Then, $|x'z|^2 = (t+a)^2 + h^2$, and $|xz'|^2 = (b-t)^2 + h^2$. Note that here we have the same condition on $t$ as in the previous case: $2t - c < 2\lambda h^2$. Therefore,
	\begin{align*}
	&|x'z|^2 - |xz'|^2 = 2(a+b)t + a^2 - b^2 = (a+b)(2t - (b-a)) = (a+b)(2t - c)\\ 
	&\leq (a+b)2\lambda h^2 \leq 8\eps\lambda  h^2 \leq 8\eps\lambda |xz'|^2.
	\end{align*}
	Thus we have obtained
	\begin{align}\label{diag}
	|x'z|^2 \leq |xz'|^2(1 + 8\eps\lambda).
	\end{align}
	The claim is proved. Note that in the last inequality of \eqref{tan}, we can change $|x'z'|$ to $|x'z|$. Therefore, to prove that $|x'y| \approx |xy'|$ we can use the same approach as in Case 1.1.  
	\paragraph{Case 3. $x\in\Omega$, $\wty \in \partial\Omega$.} In case 1.1., when proving that $|xz| = |x'z'|$ we could as well assume that $|yy'| = 0$. Therefore this situation can be handled in the same way. 
\end{proof}

\begin{cor}
	$T$ is a Lipschitz homeomorphism of $V$. In particular, $T$ maps Borel sets to Borel sets.
\end{cor}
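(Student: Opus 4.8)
The plan is to deduce everything from Lemma \ref{dist} together with the relation $T=T^{-1}$ recorded just after the definition of the reflection. First I would note that the right-hand inequality of Lemma \ref{dist}, namely $|Tx-Ty|\le C|x-y|$ for all $x,y\in V$, is exactly the statement that $T$ is Lipschitz on $V$ with constant $C$, hence continuous. Since $T=T^{-1}$, the same inequality applied to $T^{-1}$ shows that the inverse is Lipschitz with the same constant, hence continuous. As $T$ is a bijection of $V$ onto itself (immediate from the construction out of the bijections $\psi_s$, $\chi_s$ and the identity on $\partial\Omega$), being a continuous bijection with continuous inverse it is a homeomorphism of $V$.

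For the Borel statement I would avoid the (generally false) principle that continuous images of Borel sets are Borel, and instead use that a homeomorphism transports the Borel $\sigma$-algebra onto itself. Concretely: if $U\subseteq V$ is open, then $T(U)=(T^{-1})^{-1}(U)$ is open because $T^{-1}$ is continuous. Let $\mathcal{A}=\{A\subseteq V:\ T(A)\text{ is Borel}\}$. Since $T$ is a bijection it commutes with complements and countable unions, $T(V\setminus A)=V\setminus T(A)$ and $T\big(\bigcup_i A_i\big)=\bigcup_i T(A_i)$, so $\mathcal{A}$ is a $\sigma$-algebra; by the previous sentence it contains all open subsets of $V$, hence all Borel subsets of $V$. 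Because $V$ is open, hence Borel, in $\mR^n$, this also yields that $T$ maps Borel subsets of $\mR^n$ contained in $V$ to Borel subsets of $\mR^n$.

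There is no genuine difficulty here: the analytic heart of the matter, the two-sided comparison $\tfrac1C|x-y|\le|Tx-Ty|\le C|x-y|$, already resides in Lemma \ref{dist}, and the corollary is pure bookkeeping. The only place where a moment's care is warranted is the Borel claim, which is handled by the $\sigma$-algebra argument above rather than by any statement about images of Borel sets under mere continuous maps.
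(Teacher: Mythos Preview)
Your proposal is correct and matches the paper's approach: the paper gives no explicit proof, treating the corollary as immediate from Lemma~\ref{dist} and the relation $T=T^{-1}$. Your added care with the Borel claim (using that homeomorphisms transport the Borel $\sigma$-algebra, rather than appealing to continuous images) is a welcome clarification of a point the paper leaves implicit.
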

\begin{lem}\label{mtk}
	Let $m$ be the Lebesgue measure on $\mR^n$. Then, there exists $C' \geq 1$, such that for every Borel $K\subseteq V$, we have $\frac 1{C'} m(K) \leq m(T[K]) \leq C' m(K)$.
\end{lem}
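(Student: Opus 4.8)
The plan is to deduce the two-sided bound from the bi-Lipschitz estimate of Lemma \ref{dist} together with the elementary fact that Lipschitz maps distort $n$-dimensional Lebesgue measure by at most the $n$-th power of the Lipschitz constant. Recall from Lemma \ref{dist} and the Corollary preceding this statement that $T\colon V \to V$ is a homeomorphism with $T = T^{-1}$ satisfying $\frac1C |x-y| \leq |Tx-Ty| \leq C|x-y|$ for all $x,y\in V$, and that $T$ maps Borel sets to Borel sets; in particular $T[K]$ is a Borel subset of $V$ whenever $K$ is, so there is no measurability issue to worry about and $T[K]$ can itself be fed back into $T$.

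The first step is to record the measure-distortion lemma in the form I need: if $f\colon A \to \mR^n$ is Lipschitz with constant $L$ on a set $A\subseteq \mR^n$, then $m^*(f(E)) \leq L^n m^*(E)$ for every $E\subseteq A$, where $m^*$ is Lebesgue outer measure. The cleanest justification passes through Hausdorff measure: on $\mR^n$ one has $m = c_n\,\mathcal{H}^n$ for a dimensional constant $c_n$, and from the definition of $\mathcal{H}^n$ via countable covers $\{U_i\}$ one has $\operatorname{diam}(f(E\cap U_i)) \leq L\operatorname{diam}(U_i)$, whence $\mathcal{H}^n(f(E)) \leq L^n \mathcal{H}^n(E)$ and therefore $m^*(f(E)) \leq L^n m^*(E)$. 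Equivalently one can argue directly: cover $E$ by balls $B(p_i,r_i)$ with $p_i\in E$ and $\sum_i \alpha_n r_i^n \leq m^*(E)+\varepsilon$, observe that $f(E\cap B(p_i,r_i)) \subseteq B(f(p_i), Lr_i)$, and sum. (Since this is entirely standard, one may also simply cite, e.g., Evans--Gariepy or Federer.)

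Applying this estimate with $f = T$ and $L = C$ gives $m(T[K]) \leq C^n m(K)$ for every Borel $K\subseteq V$. Applying it once more with $f = T^{-1} = T$, this time to the Borel set $T[K]\subseteq V$, gives $m(K) = m\big(T[T[K]]\big) \leq C^n m(T[K])$, i.e.\ $m(T[K]) \geq C^{-n} m(K)$. Setting $C' = C^n \geq 1$ yields $\frac1{C'} m(K) \leq m(T[K]) \leq C' m(K)$, which is the assertion.

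I do not expect a genuine obstacle here: the only potentially delicate point, measurability of the image $T[K]$, is handled for free by the fact that $T$ is a homeomorphism, and the rest is the routine Lipschitz measure-distortion lemma applied to $T$ and to its inverse, both of which have Lipschitz constant $C$ by Lemma \ref{dist}.
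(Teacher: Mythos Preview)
Your argument is correct: the bi-Lipschitz bound from Lemma~\ref{dist} together with the standard inequality $m^*(f(E)) \le L^n m^*(E)$ for $L$-Lipschitz $f$ immediately gives both inequalities with $C' = C^n$, and measurability of $T[K]$ is clear since $T$ is a homeomorphism. The paper does not actually give a proof of this lemma but simply refers to \cite{Naumann}, Theorem~3.1; your self-contained argument is exactly the kind of reasoning that reference would supply, so there is no meaningful difference in approach to discuss.
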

\noindent For the proof of this Lemma, see \cite{Naumann}, Theorem 3.1. Knowing that, we can deduce how the integrals behave under Lipschitz mappings.

\begin{cor}\label{jakob}
	The following "change of variable" formula holds for $W\subseteq V$:
	\begin{equation}\label{change}
	\pow g(Tx) dx \approx \potw g(x) dx,
	\end{equation}
	with the proportion being independent of $g$, and $W$.
\end{cor}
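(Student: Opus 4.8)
The plan is to deduce \eqref{change} from Lemma \ref{mtk} by the usual approximation scheme, starting from indicator functions. It suffices to treat nonnegative measurable $g$ (the case relevant in the sequel); a general $g$ is then handled by applying the result to $g_+$ and $g_-$ separately. Note first that, since $T$ is a homeomorphism of $V$, the map $x \mapsto g(Tx)$ is measurable whenever $g$ is, so both sides of \eqref{change} make sense (as elements of $[0,\infty]$) for Borel $W \subseteq V$.

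First I would verify \eqref{change} for $g = \textbf{1}_A$. Because $T = T^{-1}$ on $V$, we have $\textbf{1}_A(Tx) = \textbf{1}_{T[A]}(x)$ for $x \in V$ (with $A$ tacitly replaced by $A \cap V$, since $Tx \in V$), and hence $\pow \textbf{1}_A(Tx)\,dx = m(W \cap T[A])$, while $\potw \textbf{1}_A(x)\,dx = m(TW \cap A)$. Since $T$ is a bijection of $V$ with $T \circ T = \mathrm{id}$, one has the set identity $T[W \cap T[A]] = TW \cap A$, and all the sets involved are Borel because $T$ maps Borel sets to Borel sets. Applying Lemma \ref{mtk} to the Borel set $W \cap T[A] \subseteq V$ gives
\[
\frac{1}{C'}\, m(W \cap T[A]) \leq m\big(T[W \cap T[A]]\big) \leq C'\, m(W \cap T[A]),
\]
that is, $m(W \cap T[A]) \approx m(TW \cap A)$ with the constant $C'$ from Lemma \ref{mtk}. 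This is \eqref{change} for $g = \textbf{1}_A$.

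By linearity of the integral, the same two-sided bound, with the same $C'$, holds for all nonnegative simple functions. For an arbitrary nonnegative measurable $g$, pick nonnegative simple functions $g_k \uparrow g$ pointwise; then $g_k \circ T \uparrow g \circ T$ as well, so the monotone convergence theorem applied to both sides carries \eqref{change} over to $g$. The whole argument is essentially bookkeeping once Lemma \ref{mtk} is available; the only points that need care are the involution property $T = T^{-1}$, which yields the set identity $T[W \cap T[A]] = TW \cap A$, and the preservation of Borel measurability under $T$ --- both already recorded above. I therefore do not anticipate a genuine obstacle here: the real content of the change-of-variables estimate lies in Lemma \ref{mtk}, and ultimately in the metric comparison of Lemma \ref{dist}.
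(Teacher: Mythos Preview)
Your argument is correct. You take a slightly different route from the paper: instead of invoking the Radon--Nikodym theorem to produce a density $h$ for the image measure $m\circ T$ and then bounding $h$ by the constant $C'$ from Lemma~\ref{mtk}, you apply Lemma~\ref{mtk} directly to the sets $W\cap T[A]$ to obtain \eqref{change} for indicators, and then pass to simple and general nonnegative functions by linearity and monotone convergence. Your approach is more elementary in that it avoids the Radon--Nikodym machinery entirely, while the paper's approach has the advantage of making the comparison explicit via a bounded density $\tfrac{1}{C'}\le h\le C'$. Both routes are standard and yield the same constants; the substantive input in either case is Lemma~\ref{mtk}, as you correctly note.
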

\begin{proof}
	By setting $Tx = y$, we have
	\begin{equation*}
	\pow g(Tx) dx = \potw g(y) d(m\circ T)(y).
	\end{equation*}
	From Lemma \ref{mtk}, we conclude that $m\circ T$ is absolutely continuous w.r.t Lebesgue measure thus, by Radon-Nikodym theorem, there exists $h\in L^1(V,dx)$, such that $d(m\circ T)(x) = h(x) dx$. Moreover, we have $0<h\leq C'$ a.e. on $V$. Hence,
	\begin{align*}
	\pow g(Tx) dx &= \potw g(y) d(m\circ T)(y) = \potw g(y) h(y) dy \leq C' \potw g(y) dy\\
	&= C' \pow g(Tx) d(m\circ T)(x) \leq (C')^2 \pow g(Tx) dx.
	\end{align*}
	
\end{proof}

\subsection{The extension operator}
For a function $u:D\to \mR$, we define the seminorm
$$\|u\|_{H_\nu(D)} = \sqrt{\pd\pd (u(x) - u(y))^2d\nu_x(y) dx}.$$
Let
\begin{equation}\label{hnud}
H_\nu(D) = \{u\in L^2(D): \|u\|_{H_\nu(D)} < \infty\}.
\end{equation}
$H_\nu(D)$ is a normed space with the norm $\|u\|_{H_\nu(D)} = \sqrt{\|u\|_{L^2(D)}^2 + \|u\|_{H_\nu(D)}^2}$. Note that for $D = \mR^n$, these definitions coincide with the ones from Section \ref{sec:func}.\\
From now on we will assume that $\Omega$ is a bounded $C^{1,1}$ domain. For a fixed $\Omega$ we define $W = V\cap \Omega$ with $V$ being the same as in Lemma \ref{dist}. Let $T$ be the reflection operator introduced in the previous section.
\begin{defi}
	Let $\phi\in C^{\infty}(\mR^n)$ satisfy $0\leq \phi \leq 1$, $\phi \equiv 1$ in $\Omega^c$, and $\phi \equiv 0$ in $\Omega\backslash W$. We define the extension operator $A:H_\nu(\Omega^c) \longrightarrow A[H_\nu(\Omega^c)]$ by the formula $A(g) = \wtg$, where
	$$\wtg(x) = \begin{cases}
	g(x) & \hbox{for } x\in\Omega^c, \\
	g(Tx)\phi(x) & \hbox{for } x\in W, \\
	0 & \hbox{for } x\in \Omega\backslash W.
	\end{cases}$$
\end{defi}
\begin{rem}
	In the work of Valdinoci et al. \cite{Sobolev}, the domain was assumed to be only $C^{0,1}$, i.e. Lipschitz. By assuming the ball condition we obtain a more transparent method of reflecting the function. In \cite{Zhou}, Zhou characterizes the domains in which the extension is possible in the context of fractional Sobolev spaces.
\end{rem}
\begin{thm}\label{ext}
	Let $\alpha \geq 1$ be the Lipschitz constant for the reflection operator $T$ ($\alpha$ depends only on $\Omega$). Assume that $\nu$ has an isotropic density $v(x) = V(|x|)$, for which there exists $C_\alpha$ such that for every $\beta \in [\alpha^{-1} \wedge \frac 13, \alpha]$, we have $V(\beta x) \leq C_\alpha V(x)$. Then $A$ is a continuous operator from $H_\nu(\Omega^c)$ to $\hrn$.
\end{thm}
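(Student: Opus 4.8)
The plan is to establish the norm estimate $\|\wtg\|_{\hrn}\le C\|g\|_{\homc}$ with $C=C(\Omega,\nu)$; since $A$ is linear, this is exactly continuity, and it also shows $\wtg\in\hrn$. The $L^2$-part is immediate: $\|\wtg\|_{L^2(\mR^n)}^2=\|g\|_{L^2(\Omega^c)}^2+\pow g(Tx)^2\phi(x)^2\,dx$, and since $0\le\phi\le1$ and $T[W]\subseteq\Omega^c$, Corollary \ref{jakob} bounds the last integral by a constant times $\|g\|_{L^2(\Omega^c)}^2$. So the whole task reduces to estimating the Gagliardo form $\langle\wtg,\wtg\rangle_\nu=\tfrac12\iint_{\mR^{2n}}(\wtg(x)-\wtg(y))^2\,v(x-y)\,dx\,dy$ (here $v$ is the density of $\nu$) by $C\|g\|_{\homc}^2$.

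I would split $\mR^n=\Omega^c\cup W\cup(\Omega\setminus W)$ and bound the resulting blocks, six of them up to the symmetry $x\leftrightarrow y$. The block $\Omega^c\times\Omega^c$ equals $\iint_{\Omega^c\times\Omega^c}(g(x)-g(y))^2v(x-y)\,dx\,dy$, i.e. twice the seminorm part of $\|g\|_{\homc}^2$, so it is admissible for free. The blocks touching $\Omega\setminus W$ are easy because $\wtg$ (or the factor $\phi$) vanishes there: for $x\in\Omega^c,\ y\in\Omega\setminus W$ one has $|x-y|\ge\dist(\Omega^c,\Omega\setminus W)\ge\eps$, so $\int_{\Omega\setminus W}v(x-y)\,dy\le\nu(B(0,\eps)^c)<\infty$ and the block is $\le\nu(B(0,\eps)^c)\|g\|_{L^2(\Omega^c)}^2$; for $x\in W,\ y\in\Omega\setminus W$ one uses that $\phi$ is Lipschitz with $\phi\equiv0$ on $\Omega\setminus W$, so $\phi(x)\le L\,\dist(x,\Omega\setminus W)\le L|x-y|$, hence $\phi(x)^2v(x-y)\le C(1\wedge|x-y|^2)v(x-y)$, which integrates against $\pow g(Tx)^2\,dx$ to something $\le C\|g\|_{L^2(\Omega^c)}^2$ by the Lévy condition; and $(\Omega\setminus W)\times(\Omega\setminus W)$ vanishes. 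On the block $W\times W$ I would change variables $(x,y)\mapsto(Tx,Ty)=:(x',y')$: by Corollary \ref{jakob} the Jacobians are bounded, $T[W]\subseteq\Omega^c$, and by Lemma \ref{dist} the ratio $|x'-y'|/|Tx'-Ty'|$ lies in $[\alpha^{-1},\alpha]$, so the scaling hypothesis on $V$ gives $v(Tx'-Ty')\le C_\alpha v(x'-y')$; thus this block is $\le C\iint_{\Omega^c\times\Omega^c}(g(x')-g(y'))^2v(x'-y')\,dx'\,dy'\le C\|g\|_{\homc}^2$.

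The main obstacle is the cross block $\Omega^c\times W$, where the geometry of the reflection and the scaling of $V$ are really needed. Writing $\wtg(x)-\wtg(y)=\phi(y)(g(x)-g(Ty))+(1-\phi(y))g(x)$ and using $1-\phi(y)\le L\,\delta(y)\le L|x-y|$ with $\delta=\dist(\cdot,\partial\Omega)$, the part carrying $1-\phi(y)$ is again $\le C\iint_{\Omega^c\times W}g(x)^2(1\wedge|x-y|^2)v(x-y)\le C\|g\|_{L^2(\Omega^c)}^2$. There remains $J:=\iint_{\Omega^c\times W}\phi(y)^2(g(x)-g(Ty))^2v(x-y)\,dy\,dx$, which after the substitution $b=Ty$ (bounded Jacobian, $T[W]\subseteq\Omega^c$, $y=Tb$) becomes, up to a constant, $\iint_{\Omega^c\times T[W]}(g(x)-g(b))^2v(x-Tb)\,dx\,db$; the point is that the kernel is evaluated at $|x-Tb|$ while the pair $(x,b)$ lives in $\Omega^c\times\Omega^c$ with its natural separation $|x-b|$. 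When $|x-b|\ge\delta(b)$ one has $\tfrac13|x-b|\le|x-Tb|\le3|x-b|$ — from $|b-Tb|=2\delta(b)$ and $|x-Tb|\ge\dist(Tb,\Omega^c)=\delta(b)$ — so the scaling hypothesis replaces $v(x-Tb)$ by $C_\alpha v(x-b)$ and this part is absorbed into the $\Omega^c\times\Omega^c$ block. When $|x-b|<\delta(b)$, the point $x$ sits deep inside the exterior ball $B(b,\delta(b))\subseteq\Omega^c$, one has $|x-Tb|\in[\delta(b),3\delta(b))$, so by scaling $v(x-Tb)$ is comparable to the constant $V(\delta(b))$; for these pairs I would estimate $(g(x)-g(b))^2$ by inserting the average of $g$ over an annulus $B(b,c_2\delta(b))\setminus B(b,c_1\delta(b))\subseteq\Omega^c$, every point of which is at distance comparable to $\delta(b)$ from both $x$ and $b$, and then apply Cauchy--Schwarz and Fubini. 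After using the scaling hypothesis once more to move $V$ between comparable scales, the result is bounded by a constant times $\iint_{\Omega^c\times\Omega^c}(g(a)-g(c))^2v(a-c)\,da\,dc+\|g\|_{L^2(\Omega^c)}^2$, because for fixed $a,c\in\Omega^c$ the admissible centres $b$ form a set of measure $\lesssim|a-c|^n$ on which $\delta(b)\approx|a-c|$.

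I expect this Poincaré-type estimate in a thin collar around $\partial\Omega$ to be the hard part; once it is in place, summing the blocks yields $\langle\wtg,\wtg\rangle_\nu\le C\|g\|_{\homc}^2$ and hence the theorem. The rest is bookkeeping: keeping track of the finitely many ratios of distances that occur (all of them bounded, hence covered by the hypothesis once $\alpha$ is taken large enough, which is harmless since $\alpha$ need only dominate the Lipschitz constant of $T$), and checking the elementary facts about $T$ near $\partial\Omega$ used above, namely $T[W]\subseteq\Omega^c$, $|b-Tb|=2\delta(b)$, and $\delta\circ T=\delta$.
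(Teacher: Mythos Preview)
Your decomposition into blocks and the handling of the $L^2$ part, the $\Omega^c\times\Omega^c$ block, and all blocks touching $\Omega\setminus W$ are exactly what the paper does. On $W\times W$ you are a bit too quick: after the substitution the integrand is $(g(x')\phi(Tx')-g(y')\phi(Ty'))^2$, not $(g(x')-g(y'))^2$. The paper inserts the split $g(Tx)\phi(x)-g(Ty)\phi(y)=g(Tx)(\phi(x)-\phi(y))+(g(Tx)-g(Ty))\phi(y)$ \emph{before} changing variables; the first piece is handled by $|\phi(x)-\phi(y)|\le C|x-y|$ and the L\'evy condition, and only the second needs the bi-Lipschitz substitution together with the scaling of $V$. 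With that split your argument for this block is fine.

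The real divergence is on the cross block. The paper does \emph{not} split into near and far and uses no averaging. After the same substitution it records only the one-sided inequality $|x-b|\le 3|x-Tb|$ (from $|x-Tb|\ge\dist(Tb,\Omega^c)=\tfrac12|b-Tb|$) and writes $V(|x-Tb|)\le V(\tfrac13|x-b|)\le C_\alpha V(|x-b|)$, finishing in one line; so what you flag as ``the hard part'' is dispatched immediately. Your worry about the regime $|x-b|<\delta(b)$ is legitimate under a strict reading of the hypothesis as pure scaling with no monotonicity, since $|x-Tb|/|x-b|$ is unbounded there; the paper's first inequality tacitly uses that $V$ is nonincreasing. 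If you grant that (standard for L\'evy densities), the averaging argument is unnecessary. If you insist on avoiding monotonicity, your idea can be made to work, but not as stated: an annulus $B(b,c_2\delta(b))\setminus B(b,c_1\delta(b))$ with $c_1>1$ need not lie in $\Omega^c$, and with $c_2\le 1$ its points can be arbitrarily close to $x$, so the claimed comparability of distances fails. A workable variant is to average over $B(b,\delta(b))$ minus small balls around $x$ and $b$; then for admissible $b$ one has $\delta(b)\approx|x-z|$ and $b\in B(x,C|x-z|)$, and the Fubini step goes through.
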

	The extension problem for the fractional Laplacian is quite well-studied for $H_\nu(\mR^n)$ spaces \cite{Jonsson1978},\cite{Zhou}. Recently, Dyda and Kassmann \cite{BDMK} resolved the issue for spaces of type $\vkrn$ for $\nu$ corresponding to the fractional Laplacian. In our work, the extension belongs to the space $H_\nu(\mR^n)$. It is more restrictive due to the fact that we require the function to be "smooth" outside $\Omega$. However it allows us to use more general L\'evy measures.
\begin{proof}
	By assumptions, $d\nu(x) = v(x) dx$, where $v(x) = V(|x|)$ for some function $V: \mR_+ \longrightarrow [0,\infty)$. We have $g\in\homc$ i.e., $g\in L^2(\Omega^c)$ and $\poza\poza (g(x) - g(y))^2 d\nu_x(y) dx < \infty$.
	In order to show that $\wtg \in \hrn$, we need to show that $\wtg\in L^2(\mR^n)$ and $\porn\porn (\wtg(x) - \wtg(y))^2 d\nu_x(y) dx < \infty.$  For the first part, we have
	\begin{align}
	\porn \wtg(x)^2 dx &= \poza g(x)^2 dx + \int\limits_W \wtg(x)^2 dx = \poza g(x)^2 dx + \int\limits_W g(Tx)^2\phi(x)^2 dx\nonumber \\
	&\leq \poza g(x)^2 dx + \pow g(Tx)^2 dx \leq \poza g(x)^2 + C\potw g(x)^2 dx\label{L2} \\
	&\leq (C+1)\poza g(x)^2 dx, \nonumber
	\end{align}
	where in \eqref{L2} we have used Lemma \ref{jakob}. Thus, $\|\wtg\|^2_{L^2(\mR^n)} \leq C\|g\|_{L^2(\Omega^c)}^2 \leq C\|g\|_{\hrn}^2.$ \\
	We split the seminorm part into four integrals:
	\begin{align}
	\porn\porn (\wtg(x) - \wtg(y))^2d\nu_x(y)dx =& \poza\poza (g(x) - g(y))^2 d\nu_x(y) dx \label{triv}\\
	&+ \poza\poom (\wtg(x) - \wtg(y))^2 d\nu_x(y) dx \tag{A'}\label{A'} \\
	&+ \poom\poza (\wtg(x) - \wtg(y))^2 d\nu_x(y) dx \tag{A} \label{A} \\
	&+ \poom\poom (\wtg(x) - \wtg(y))^2 d\nu_x(y) dx \tag{B}\label{B}.
	\end{align}
	There is nothing to do in \eqref{triv}. Note that \eqref{A} = \eqref{A'} (cf. Lemma \ref{sym}). We will focus on \eqref{A}.
	\begin{align}
	\poom\poza (\wtg(x) - \wtg(y))^2 d\nu_x(y) dx =& \pow\poza (\wtg(x) - g(y))^2 d\nu_x(y) dx \tag{A.1}\label{A1} \\
	&+\int\limits_{\Omega\backslash W}\poza g(y)^2 d\nu_x(y) dx. \tag{A.2}\label{A2}
	\end{align}
	We have
	\begin{equation*}
	|g(Tx)\phi(x) - g(y)| \leq |g(Tx)\phi(x) - g(y) \phi(x)| + |g(y)\phi(x) - g(y)|,
	\end{equation*}
	hence \eqref{A1} is less or equal to
	\begin{align}
	&\pow\poza ( |g(Tx) - g(y)|\phi(x) + |g(y) - g(y) \phi(x)|)^2 d\nu_x(y) dx \nonumber \\
	&\leq 2\pow\poza g(y)^2(1 - \phi(x))^2 d\nu_x(y) dx \tag{A.1.1}\label{A11} \\
	&\quad + 2\pow\poza (g(Tx) - g(y))^2\phi(x)^2 d\nu_x(y) dx \tag{A.1.2}\label{A12}.
	\end{align}
	Note that smoothness of $\phi$ guarantees that there exists $C>0$, such that for every $y\in W$, we have $1 - \phi(y) \leq C \dist(y,\Omega^c)$. Therefore we can estimate \eqref{A11} as follows
	\begin{align*}
	\pow\poza g(y)^2 (1 - \phi(x))^2 v(x-y) dy dx &= \poza g(y)^2 \pow (1 - \phi(x))^2 v(x-y)dx dy\\
	&\leq C\poza g(y)^2 \pow \dist(x,\Omega^c)^2 v(x-y) dx dy. 
	\end{align*}
	Note that if $D = \eps \vee 1$, then we have $\dist(x,\Omega^c) \leq D(1\wedge |x-y|)$ for every $y\in\Omega^c$. Since $\nu$ is a L\'{e}vy measure, we get
	\begin{align*}
	\poza g(y)^2 \pow \dist(x,\Omega^c)^2 v(x-y)dx dy &\leq D^2\poza g(y)^2 \pow (1\wedge|x-y|^2) v(x-y) dx dy\\
	&\leq D^2 \left(\porn 1\wedge |x|^2 v(x)dx\right) \poza g(y)^2 dy.
	\end{align*} 
	From this we conclude that \eqref{A11} $\leq C \|g\|_{L^2(\Omega^c)}^2$.
	Substituting for $Tx$ in \eqref{A12} yields:
	\begin{align} \label{a12'}
	\pow\poza (g(Tx) - g(y))^2 \phi(x)^2 v(x-y) dy dx \approx \potw\poza (g(x) - g(y))^2\phi(Tx)^2 v(Tx - y) dy dx. 
	\end{align}
	We have $|x - y| \leq |x - Tx| + |Tx - y|$, and $|Tx - y| \geq d(Tx,\Omega^c) = \frac {|x-Tx|}2$, hence $|x-y| \leq 3|Tx - y|$. By the assumptions on $v$, we get $v(Tx-y) = V(|Tx - y|) \leq V(\frac 13|x-y|) \leq C_\alpha V(|x-y|)$. Therefore, the RHS of \eqref{a12'} is less or equal than
	\begin{align*}
	C_\alpha\potw\poza (g(x) - g(y))^2\phi(Tx)^2 v(x-y) dy dx \leq C_\alpha \|g\|_{\homc}^2.
	\end{align*}
	Estimation of \eqref{A2} is pretty straightforward. For this case, note that for every $y\in\Omega^c$, $\dist(y,\Omega\backslash W) > \eps$. Hence,
	\begin{align*}
	\poza g(y)^2 \int\limits_{\Omega\backslash W} v(x-y) dx dy = \poza g(x)^2 \nu((\Omega\backslash W)-x) dx \leq \nu(B(0,\eps)^c)\poza g(x)^2 dx.
	\end{align*}
	\eqref{B} can be split as follows
	\begin{align}
	\poom\poom (\wtg(x) - \wtg(y))^2 d\nu_x(y) dx &= \pow\pow (g(Tx)\phi(x) - g(Ty)\phi(y))^2 d\nu_x(y) dx \tag{B.1} \label{B1} \\
	&\quad +2\pow\int\limits_{\Omega \backslash W} g(Tx)^2\phi(x)^2 v(x-y) dy dx \tag{B.2} \label{B2}.
	\end{align}
	\eqref{B1} can be bounded from above by
	\begin{align}
	&2\pow\pow g(Tx)^2(\phi(x)-\phi(y))^2 d\nu_x(y) dx \tag{B.1.1} \label{B11} \\
	&+ 2 \pow\pow (g(Tx) - g(Ty))^2\phi(y) d\nu_x(y) dx. \tag{B.1.2} \label{B12}.
	\end{align}
	In \eqref{B11} we have
	\begin{align*}
	&\pow g(Tx)^2\pow (\phi(x) - \phi(y))^2 d\nu_x(y) dx \leq C\pow g(Tx)^2 \pow |x-y|^2 d\nu_x(y) dx \\
	&\leq C' \porn (1\wedge |y|^2) d\nu(y) \pow g(Tx)^2 dx \approx C'\porn (1\wedge |y|^2)d\nu(y) \potw g(x)^2 dx \\
	&\leq D \|g\|_{L^2(\Omega^c)}^2.
	\end{align*}
	We know that $\alpha|x-y| \geq |Tx - Ty| \geq \alpha^{-1}|x-y|$ holds for all $x,y \in W$. Using the densities properties we can estimate \eqref{B12}, which is less or equal to
	\begin{align*}
	\potw\potw (g(x) - g(y))^2 v(Tx - Ty) dy dx &\leq C_\alpha\potw\potw (g(x) - g(y))^2 v(x-y) dy dx \\
	&\leq C_\alpha \|g\|_{\homc}^2. 
	\end{align*}
	In order to estimate \eqref{B2}, note that for every $y\in\Omega\backslash W$, we have $\phi(x)^2 \apprle |x-y|^2$, thus \eqref{B2} is dominated by
	\begin{equation*}
	\pow g(Tx)^2 \int\limits_{\Omega\backslash W} |x-y|^2 d\nu_x(y) dx \apprle \porn(1\wedge |y|^2) d\nu(y)\pow g(Tx)^2 dx \approx \potw g(x)^2 dx \leq \|g\|_{L^2(\Omega^c)}^2.
	\end{equation*} 
	Summing up all the cases finishes the proof.
\end{proof}
\begin{rem}
	Extension, if it exists, is not determined uniquely. We can add any function from $\hkrn$ to it, which will not change the values outside $\Omega$.
\end{rem}
\begin{cor}\label{exicons}
Let $\nu$ and $\Omega$ satisfy the assumptions of Theorem \ref{ext}. If $g\in H_\nu(\Omega^c)$, then the Dirichlet problem \eqref{DP} has a unique weak solution.
\end{cor}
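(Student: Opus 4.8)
The plan is to reduce the corollary to Theorem \ref{exigen} by exhibiting, for the given exterior datum $g$, an admissible extension in $\vkrn$, and the point is that Theorem \ref{ext} provides exactly such an extension. First I would note that $\Omega$, being a bounded $C^{1,1}$ domain, is in particular a nonempty bounded open set, and that an isotropic, absolutely continuous measure $d\nu(x)=V(|x|)\,dx$ satisfying the scaling hypothesis of Theorem \ref{ext} is a symmetric L\'evy measure in the sense of \eqref{Levy}; thus the standing hypotheses of Theorem \ref{exigen} are in force, and $f\in L^2(\Omega)$ is part of the data of \eqref{DP}.

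Next, since $\nu$ and $\Omega$ satisfy the assumptions of Theorem \ref{ext}, the extension operator $A$ is defined on $\homc$ and maps it continuously into $\hrn$. Applying $A$ to the given $g\in\homc$ produces $\wtg:=A(g)\in\hrn$, and by the very definition of $A$ we have $\wtg\restriction_{\Omega^c}=g$. Now I would invoke the elementary inclusion recorded after Definition \ref{hkomega}, namely $\hrn\subseteq\vkrn$ (immediate from $\Omega\subseteq\mR^n$, which gives $\|u\|_{\vkrn}\le\|u\|_{\hrn}$). Hence $h:=\wtg$ is an element of $\vkrn$ with $g=h\restriction_{\Omega^c}$, i.e.\ precisely the object whose existence is the hypothesis of Theorem \ref{exigen}.

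Finally I would apply Theorem \ref{exigen} with this $h$: it yields a unique $u\in\vkrn$ with $u=g$ a.e.\ in $\Omega^c$ satisfying \eqref{WEAK}, which is exactly the assertion that \eqref{DP} has a unique weak solution. There is no genuine obstacle remaining at this stage: all the analytic difficulty is absorbed into Theorem \ref{ext} (the continuity of the reflection-based extension), and the role of this corollary is simply to record that Theorem \ref{ext} upgrades the abstract solvability criterion of Theorem \ref{exigen}, ``$g$ admits a $\vkrn$-extension'', to the concrete and verifiable condition ``$g\in\homc$''. One may additionally remark, as in the preceding Remark, that the weak solution itself does not depend on the chosen extension, so using $\wtg=A(g)$ rather than any other extension is harmless.
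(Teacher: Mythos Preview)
Your proposal is correct and is precisely the argument the paper intends: the corollary is stated without proof because it is an immediate combination of Theorem \ref{ext} (producing $\wtg\in\hrn\subseteq\vkrn$ with $\wtg\restriction_{\Omega^c}=g$) and Theorem \ref{exigen}. Your write-up simply makes this explicit, and there is nothing to add or correct.
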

\section*{Acknowledgements} I would like to thank Krzysztof Bogdan for the great amount of discussions on the subject, especially for suggesting the representation in Lemma \ref{delty}. I also thank Bartłomiej Dyda, Tomasz Grzywny and Moritz Kassmann for their valuable remarks. I express my gratitude to the anonymous Referee, for the thorough review and important suggestions.
\bibliographystyle{plain}

\bibliography{bibliography}

\end{document}